\documentclass[reqno,12pt]{amsart}
\DeclareFontFamily{OML}{script}{}{}
\DeclareFontShape{OML}{script}{m}{it}
{ <5-20> rsfs10 }{}
\DeclareMathAlphabet{\mathscript}{OML}{script}{m}{it}
\usepackage{cite}
\usepackage{color}

\usepackage[pagewise]{lineno}
%\ifx\red\undefined
%\newcommand{\red}{\color{red}}
%\newcommand{\blue}{\color{blue}}
%\newcommand{\green}{\color{green}}
%\newcommand{\cyan}{\color{cyan}}
%\fi

%%%%%%%%%%%%\usepackage{refcheck}
\usepackage{graphicx,graphics}
\usepackage[colorlinks=true]{hyperref}
\usepackage{amstext}
\usepackage{pifont}
\usepackage{amsthm}
\usepackage{bm}
\usepackage{amscd}
\usepackage{amsmath}
\usepackage{amssymb}
\usepackage{latexsym}
\usepackage{amsfonts}
\usepackage[notref,final]{showkeys}
\usepackage{esint}
\usepackage{enumerate}

\usepackage[ansinew]{inputenc}
\usepackage[encapsulated]{CJK}

\textwidth=6.in \textheight=9.5in
\oddsidemargin=0.25in
\evensidemargin=0.25in
\topmargin=-0.5in
%\renewcommand{\baselinestretch}{0.95}

%%%%%%%%%%%%%%%%%%%%%%%%%%%%%%%%%%%%%%%%%%%%%%%%%%%%%%%%%%%%%%%%%%%%%%%%%%%%%
\hfuzz=4mm \vfuzz=4mm
\hbadness=20000 \vbadness=20000

%%%%%%%%%%%%%%%%%%%%%%%%%%%%%%%%%%%%%%%%%%%%%%%%%%%%%%%%%%%%%%%%%%%%%%%%%%%%%

\ifx\text\undefined
\newcommand{\text}{\mbox}
\fi
\ifx\operatorname\undefined
\newcommand{\operatorname}{\mathop}
\fi

\newcommand{\ki}{{\mbox{\raise.5ex\hbox{$\chi$}}\hspace{.2ex}}}

\renewcommand{\epsilon}{\varepsilon}

%\newcommand{\p}{\partial}
%\newcommand{\s}{\sigma}
%\newcommand{\z}{\zeta}
%\newcommand{\G}{\Gamma}
%\newcommand{\g}{\gamma}
%\newcommand{\la}{\lambda}
%\newcommand{\La}{\Lambda}

%%%%%%%%%%%%%%%%%%%%%%%%%%%%%%%%%%%%%%%%%%%%%%%%%%%%%%%%%%%%%%%%%%%%%
%%----
%\newcommand\be{\begin{equation}}
	%	\newcommand\ee{\end{equation}}
%\newcommand\bea{\begin{eqnarray}}
	%	\newcommand\eea{\end{eqnarray}}
%\newcommand\beaa{\begin{eqnarray*}}
	%	\newcommand\eeaa{\end{eqnarray*}}

%\renewcommand{\le}{\leqslant}
%\renewcommand{\leq}{\leqslant}
%\renewcommand{\ge}{\geqslant}
%\renewcommand{\geq}{\geqslant}
\renewcommand{\hat}{\widehat}
\newenvironment{eqa}{\begin{equation}%
		\begin{array}{rcl}}{\end{array}\end{equation}}
\newcommand\beqa{\begin{eqa}}
	\newcommand\eeqa{\end{eqa}}
%%%----

\numberwithin{equation}{section}
\newtheorem{thm}{Theorem}[section]
\newtheorem{alphatheorem}{Theorem}  
  
\newtheorem{cor}[thm]{Corollary}
\newtheorem{lem}{Lemma}[section]
\newtheorem{rmk}{Remark}[section]

\theoremstyle{definition}
\newtheorem{defn}[lem]{Definition}
%\newtheorem*{thm1}{Theorem A}
%\newtheorem*{thm2}{Theorem B}

%cite the theorem.
\newcommand{\thmref}[1]{Theorem~\ref{#1}}
\newcommand{\secref}[1]{Section \ref{#1}}
\newcommand{\lemref}[1]{Lemma~\ref{#1}}

\newcommand{\corref}[1]{Corollary~\ref{#1}}

\newcommand{\void}[1]{}

\newcommand\BR{{\mathbb R}}
\newcommand\BN{{\mathbb N}}

%\newcommand\vep{{\varepsilon}}
  %%%%%%%%%% for complex field
\newcommand{\bR}{{\mathbb R}}  %%%%%%%%%% for real field

\begin{document}\begin{CJK}{UTF8}{gkai}

		\title[  Gradient Estimates for  $	\Delta_{p}(u^{\gamma})+ au^{q}=0$ ]{Gradient Estimates for the doubly nonlinear diffusion equation on Complete Riemannian Manifolds }
		\author{Chen Guo and Zhengce Zhang}
		\date{\today}
		\address[Chen Guo]{School of Mathematics and Statistics, Xi'an Jiaotong University, Xi'an, 710049, P. R. China}
		\email{jasonchen123@stu.xjtu.edu.cn}
	
		\address[Zhengce Zhang]{School of Mathematics and Statistics, Xi'an Jiaotong University, Xi'an, 710049, P. R. China}
		\email{zhangzc@mail.xjtu.edu.cn}
		\thanks{Corresponding author: Zhengce Zhang}
		\thanks{Keywords: Doubly nonlinear diffusion equation; Liouville-type theorem;  Gradient estimate}
		\thanks{2020 Mathematics Subject Classification: 35B09, 35J92, 35R01, 53C21}

		\begin{abstract}
					We study the elliptic version of doubly nonlinear  diffusion equations on a complete Riemannian manifold $(M,g)$. Through the combination of a special nonlinear transformation and the standard Nash-Moser iteration procedure, some Cheng-Yau type gradient estimates for positive solutions are derived. As by-products, we also obtain Liouville type results and Harnack's inequality. These results fill a gap in Yan and Wang (2018)\cite{YW}, due to the lack of one key inequality when $b=\gamma-\frac{1}{p-1}>0$,  and provide a partial answer to the question that whether gradient estimates for the doubly nonlinear diffusion equation can be extended to the case $b>0$ . 
		\end{abstract}
		\maketitle		
		\section{Introduction}\label{Introduction}
		The investigation of gradient estimates for elliptic and parabolic equations on Riemannian manifolds has a rich historical background. Li and Yau \cite{LY1} first established well-known gradient estimates for the linear heat equation on Riemannian manifolds. This seminal paper, together with the corresponding elliptic result \cite{CY}, has a profound and long-lasting impact on subsequent research and a wide range of applications. For example, the key idea can be used to estimate  eigenvalues of a manifold \cite{LY2}, the lower and upper bound of the heat kernel \cite{CLS} and investigate the geometric properties of a manifold \cite{KN}.
 			
		Their approach has been further developed. On the one hand, people obtained  different types of estimates such as Davies type\cite{DEB}, Hamilton type\cite{HRS}, Souplet-Zhang type \cite{SZ1} and Li-Xu type \cite{LX} for various nonlinear equations on Riemannian manifolds. These nonlinear equations, arising from geometry, physics, non-Newtonian fluids and various other categories, have been deeply studied by many scholars.	
		On the other hand, regardless of whether these equations are defined on  Riemannian manifolds or more general geometry structures such as sub-Riemannian manifolds \cite{WZ2}, graphs\cite{BHLLMY}, Alexandrov spaces\cite{ZZ}, and metric measure spaces\cite{AA,ZY1}, researchers also obtained the corresponding gradient estimates. It is important to note that the Li-Yau gradient estimate is a fundamental element in the derivation of the entropy formula for Ricci flow.
					
		 	In this paper, we employ the Nash-Moser iteration technique to derive some Cheng-Yau type gradient estimates for the following doubly nonlinear diffusion equation 
			\begin{equation}\label{main equ}
				\Delta_{p}(u^{\gamma})+ au^{q}=0
			\end{equation}
		    on a complete Riemannian manifold $(M^{n},g)$, where $p>1$, $n\ge 2$, $\gamma>0$, $a,q\in \bR$ and
		    \begin{equation*}\label{def of double diffusion}
		    	\Delta_{p}(u^{\gamma})\triangleq div(|\nabla(u^{\gamma})|^{p-2}\nabla(u^{\gamma})).
		    \end{equation*}
		  		
			Our motivation primarily arises from two key aspects. One is \eqref{main equ} has abundant backgrounds. Its counterpart parabolic equation   
			for $a=0$ is
			\begin{equation}\label{parabolic version of main equ}
				\frac{\partial u}{\partial t}=\Delta_{p}(u^{\gamma}).
			\end{equation}
			
			As a generalization of the heat equation ($p=2,\gamma=1$), $p$-Laplace heat equation ($\gamma=1$), the porous medium equation ($p=2,\gamma>1$)
			and fast diffusion equation ($p=2,\gamma<1$), \eqref{parabolic version of main equ} appears in natural phenomena like non-Newtonian fluids, turbulent flows in porous media and glaciology\cite{VJ}. It is also closely related to various types of geometric flows such as Yamabe flow\cite{CLN} and inverse curvature flow\cite{MR1}.
			
			It is worth mentioning that Aronson and B\'{e}nlian \cite{AB} derived the following second order differential inequality for positive smooth solutions of the porous medium equation in Euclidean space $\BR^{n}$ 
			\begin{equation}\label{A-B estimate}
			     \sum_{i}\frac{\partial}{\partial x_{i}}\left(\gamma u^{\gamma-2}\frac{\partial u}{\partial x_{i}}\right)\ge -\frac{\kappa}{t},
			\end{equation}
			where $\gamma>1-\frac{2}{n}$ and $\kappa=\frac{n}{n(\gamma-1)+2}$.	
			Later, Lu, Ni, V\'{a}zquez and Villani \cite{LPNVV} extended it to both the porous medium equation and fast diffusion equation on Riemannian manifolds. They also got some Li-Yau type gradient estimates and entropy formulae.
			
			Corresponding results are given for the doubly nonlinear diffusion equation as well. Wang and Chen \cite{WC} got a sharp Li-Yau type gradient estimate and an entropy monotonicity
			formula on compact Riemannian manifolds with nonnegative Ricci curvature.
			Chen and Xiong \cite{CX} proved Li-Xu type, Davies type, and Hamilton type gradient estimates for \eqref{parabolic version of main equ}. Elliptic type gradient estimates on both compact and complete noncompact Riemannian manifolds were established by Yan and Wang\cite{YW}.
			\begin{alphatheorem}{\rm(\hspace{-0.03em}\cite[Theorem 4.1]{YW})}\label{thmA}
				Let $(M^{n},g)$ be an $n$-dimensional complete noncompact Riemannian manifold with the sectional curvature bounded from below by $-K^{2}$ for some  nonnegative constant $K$. Provided $u$ is a positive solution to \eqref{parabolic version of main equ} with $1<p\le 2$ and the upper bound $ u\le \exp(-\frac{1}{\xi \gamma\,e})$
				for some positive constant $\xi$.  $b=\gamma-\frac{1}{p-1}$ satisfies $-\tfrac{p-1}{a}<b<0$, where 
				\begin{equation*}
					a=\max\left\{\frac{(n+4)(p-1)^{2}}{4},\frac{n(p-1)}{2}+\frac{n\xi}{4}\right\}.
				\end{equation*}
				Then for all $(x,t)\in M\times (0,\infty)$, 
				\begin{equation}\label{Wang's elliptic estimate}
					\frac{|\nabla v|^{p}}{(1-v)^{p}}(x,t)\le C_{1}K^{p}\Theta^{\frac{p}{2}}+\frac{C_{2}}{t},
				\end{equation}
				where $v=\frac{\gamma}{b}u^{b}$,
				\[\Theta=\sup_{(x,t)\in M\times (0,\infty)}(bv)<\infty , \]
				and  $C_{1},C_{2}$ are constants depending on $n,p,a,\gamma$.
			\end{alphatheorem}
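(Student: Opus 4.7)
The plan is to mimic the classical Cheng--Yau / Li--Yau maximum-principle argument, adapted to the degenerate doubly nonlinear setting through a pressure-type substitution and the $p$-Bochner identity.

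First I would record the transformed equation. Setting $v=\frac{\gamma}{b}u^{b}$ with $b=\gamma-\frac{1}{p-1}<0$, a direct computation shows that $v$ satisfies a quasilinear parabolic equation whose diffusion term is $(1-v)$ times a weighted $p$-Laplacian acting on $v$, plus a multiple of $|\nabla v|^{p}$. The factor $1-v$ in front explains why the natural bounded quantity is
\[
F:=\frac{|\nabla v|^{p}}{(1-v)^{p}},
\]
and the standing upper bound $u\le\exp\!\bigl(-\tfrac{1}{\xi\gamma\,e}\bigr)$ together with $b<0$ ensures $1-v>0$ and that the supremum $\Theta=\sup(bv)$ is finite and small enough (in terms of $\xi$) to keep all subsequent nonlinearities benign.

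The heart of the proof is then to apply the $p$-Bochner identity (the linearized $p$-Laplacian formula for $|\nabla v|^{p}$) at a point where $\nabla v\ne 0$, and to combine it with the evolution equation for $v$ to derive a differential inequality for $F$ of the schematic form
\[
\mathcal{L}_{v}F\;\ge\;\alpha\,F^{1+\frac{2}{p}}\;-\;\beta K^{2}F\;-\;\frac{1}{c}\,F_{t}\;-\;(\text{harmless cross terms}),
\]
where $\mathcal{L}_{v}$ is the linearization of $\Delta_{p}$ at $v$ and $\alpha,\beta>0$ depend only on $n,p,\gamma,\xi$. Obtaining the correct sign of the coefficient in front of $F^{1+2/p}$ is the main obstacle: one must handle cross terms involving $\nabla|\nabla v|^{2}$, the Hessian $\nabla^{2}v$, and $\nabla v$ by a refined Cauchy--Schwarz / Kato-type inequality, and the admissible range $-\tfrac{p-1}{a}<b<0$ emerges precisely as the window in which this sign can be closed. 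This is exactly the inequality the authors of \cite{YW} are forced to invoke, and whose failure for $b>0$ is the gap addressed by the present paper.

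Once the differential inequality is in hand the remainder is routine. I would multiply $F$ by $t$ and by a Calabi-type cutoff $\phi=\phi\bigl(d(x,x_{0})\bigr)$ supported in $B_{2R}(x_{0})$, apply the maximum principle to $t\phi F$ at its spatial--temporal maximum, and control $|\nabla\phi|^{p}$ and the $\Delta_{p}\phi$-type terms via the Laplacian comparison theorem under the sectional-curvature lower bound $-K^{2}$, with the Calabi trick handling possible non-smoothness of $d(\,\cdot\,,x_{0})$ on the cut locus. Absorbing the self-improving term $\alpha F^{1+2/p}$ against the curvature, cutoff and $1/t$ terms by Young's inequality yields a local bound of the form
\[
F(x,t)\;\le\;C_{1}K^{p}\Theta^{p/2}+\frac{C_{2}}{t}+\frac{C_{3}(\Theta)}{R^{\,p}},
\]
and letting $R\to\infty$ delivers \eqref{Wang's elliptic estimate}.
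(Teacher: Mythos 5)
This statement is quoted from \cite{YW}; the present paper does not reprove it, and your outline does follow the same general route as the original argument (pressure substitution, Bochner-type identity for the linearized $p$-Laplacian, Calabi cutoff, maximum principle, Laplacian comparison under the sectional curvature bound, absorption by Young's inequality). The genuine gap is that the entire content of the theorem sits in the one step you leave schematic. The ``window in which the sign can be closed'' is precisely the inequality \eqref{obst} recorded in the Introduction,
\[
-\frac{n+4}{4}(p-1)^{2}bv+\frac{n(p-1)}{2}b-\frac{nb}{4v}\ge ab(1-v),
\]
and verifying it is where every hypothesis of the theorem enters: the upper bound $u\le\exp\!\left(-\frac{1}{\xi\gamma e}\right)$ is used to get $v<-\frac{1}{\xi}$ and hence $-\frac{nb}{4v}\ge\frac{n\xi b}{4}$; the specific choice $a=\max\left\{\frac{(n+4)(p-1)^{2}}{4},\frac{n(p-1)}{2}+\frac{n\xi}{4}\right\}$ is exactly what makes the resulting coefficient comparison work; the requirement $-\frac{p-1}{a}<b<0$ is forced by that comparison (this is why the estimate fails for $b>0$, the very point motivating the present paper); and the restriction $1<p\le2$ is needed separately to derive the differential inequality for the cutoff quantity ($\varphi G$) in the first place. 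Your proposal asserts that these constraints ``emerge'' from closing the sign but never performs the computation, and it nowhere explains how the particular value of $a$, the bound on $u$, or $p\le 2$ are used; so the decisive step is named, not proved.

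Two smaller inaccuracies are worth flagging. First, the pressure equation satisfied by $v=\frac{\gamma}{b}u^{b}$ does not have a diffusion coefficient $(1-v)$: the factor $1-v$ enters only through the choice of the auxiliary function $\frac{|\nabla v|^{p}}{(1-v)^{p}}$, a Souplet--Zhang-type device exploiting $v<0$, not through the structure of the equation itself. Second, the upper bound on $u$ does not make $\Theta=\sup(bv)=\sup\gamma u^{b}$ finite: with $b<0$ this quantity blows up as $u\to0$, so finiteness of $\Theta$ is a separate standing assumption in the statement rather than a consequence of the bound you invoke.
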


			Nash-Moser iteration, as a milestone in the history of PDE, provides us with an elegant way to study linear elliptic and parabolic equations with only measurable coefficients, which links the regularity problem in the nonlinear case. Wang and Zhang \cite{WZ} utilized this technique and gained local gradient estimates for $p$-harmonic functions on complete Riemannian manifolds
			\begin{equation}\label{CY estimate}
				\sup_{B(o,\frac{R}{2})} \frac{|\nabla u|}{u}\leq C(n,p)\frac{(1+\sqrt K R)}{R}.
			\end{equation}
			
				The above conclusion generalizes Cheng-Yau's classical result \cite{CY} for harmonic functions.  Distinct from \cite{KN}, their method only assumes the lower bound of Ricci curvature. The optimal constant $C(n,p)=(n-1)/(p-1)$ is given by Sung and Chang\cite{SW}. In the case $\gamma=1$, \eqref{main equ} reduces to the famous Lane-Emden equation which is fully considered by plenty of researchers.
				Representative works in $\BR^{n}$ include Gidas and Spruck \cite{GS} for $p=2$, and Serrin and Zou \cite{SZ} for the general $p>1$. For results on Riemannian manifolds, we refer to \cite{HWW,HGG,MHL,WW2} and the references therein. 
				 He, Hu and Wang\cite{he2023nashmoser} studied the following equation involving the power of $u$ and its gradient		
			\begin{equation}\label{eilliptic 1}
				\Delta_{p} u+ \beta u^{r} |\nabla u|^{q}=0.
			\end{equation} 
			
			They got the following result by iterating the norm of $|\nabla u|^{2}$ on a series of geodesic balls and using the integral estimate for some suitable norm of $u$.
			\begin{alphatheorem}{\rm(\hspace{-0.03em}\cite[Theorem 1.2]{he2023nashmoser})}\label{thmB}
				Let $(M^{n},g)$ be an $n$-dimensional complete Riemannian manifold with Ricci curvature bounded from below by $-(n-1)K$, where $K\ge 0$ is a constant. Assume $p>1$ and $u$ is a $C^{1}$- positive weak solution of \eqref{eilliptic 1} on a geodesic ball $B(o,R)$. If $n,p,q,r$ and $\beta$ satisfy 
				\begin{gather*}
					\beta\left(\frac{n+1}{n-1}-\frac{q+r}{p-1}\right)\ge 0,
				\intertext{or}
				1<\frac{q+r}{p-1}<\frac{n+3}{n-1}, \;\forall \beta\in R,
				\end{gather*}
				then there exists some positive constant $C(n,p,q,r)$ relying on $n,p,q$ and $r$ such that 
					\begin{equation}\label{CY estimate for p-laplace}
					\sup_{B(o,\frac{R}{2})} \frac{|\nabla u|}{u}\leq C(n,p,q,r)\frac{(1+\sqrt K R)}{R}.
				\end{equation}
			\end{alphatheorem}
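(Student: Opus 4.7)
The plan is to follow the Nash--Moser iteration scheme of Wang--Zhang \cite{WZ} for $p$-harmonic functions, adapted to absorb the lower-order perturbation $\beta u^r |\nabla u|^q$ present in \eqref{eilliptic 1}. First I would switch to the logarithmic variable $w = \log u$, so that $|\nabla u|/u = |\nabla w|$ and \eqref{eilliptic 1} transforms into
\[
\Delta_p w + (p-1)|\nabla w|^p + \beta\, u^{q+r-(p-1)}|\nabla w|^q = 0.
\]
This isolates the dimensionless exponent $\sigma := (q+r)/(p-1)$; when $\sigma = 1$ the equation is scale-invariant in $w$ and the analysis reduces essentially to the $p$-harmonic case. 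The target quantity is $f := |\nabla w|^2 = |\nabla u|^2/u^2$, and the goal is an $L^\infty$ bound for $f$ on $B(o,R/2)$.

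Second, I would derive a Bochner-type inequality for $f$ using the linearized $p$-Laplace operator $\mathcal{L}(\phi) := \mathrm{div}(|\nabla u|^{p-2}(\nabla \phi + (p-2)\langle \nabla\phi, \widehat{\nabla u}\rangle \widehat{\nabla u}))$ with $\widehat{\nabla u} = \nabla u/|\nabla u|$. Combining the Ricci lower bound $\mathrm{Ric}\ge -(n-1)K$, the refined Kato inequality for $p$-harmonic-type functions, and one application of Young's inequality to the perturbation term, one should arrive at a schematic inequality
\[
\mathcal{L}(f^\alpha) \ge c_1(n,p,\alpha)\,|\nabla u|^{p-2}f^{\alpha+1} - c_2(n,p,\alpha)K\,|\nabla u|^{p-2}f^{\alpha} - \lambda(\beta,\sigma,\alpha)\,|\nabla u|^{p-2}f^{\alpha+1},
\]
for a suitably chosen exponent $\alpha$, where the coefficient $\lambda$ records the contribution of the $\beta$-term.

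Third, fix a standard cutoff $\eta \in C_c^\infty(B(o,R))$ with $\eta \equiv 1$ on $B(o,R/2)$, $|\nabla \eta|\le C/R$ and, via Laplacian comparison, $|\Delta \eta|\le C(1+\sqrt{K}R)/R^2$. Multiplying the Bochner-type inequality by $\eta^{2s}f^{s-1}$, integrating over $B(o,R)$, moving the $\mathcal{L}$-term by parts, and invoking Saloff-Coste's local Sobolev inequality (with Bishop--Gromov volume control in $\sqrt{K}R$) yields a recursive bound of the form $\|\eta^{2}f\|_{L^{s\chi}} \le C(s)\|\eta^{2}f\|_{L^s}$ for a fixed $\chi>1$. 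Iterating along $s_k = s_0 \chi^k$ and closing against a base $L^{s_0}$ estimate---itself obtained by testing \eqref{eilliptic 1} against $u^{-\alpha}\eta^m$ for small $\alpha>0$ and using Bishop--Gromov---produces
\[
\sup_{B(o,R/2)} f \le C(n,p,q,r)\,\frac{(1+\sqrt{K}R)^2}{R^2},
\]
which after taking square roots gives \eqref{CY estimate for p-laplace}.

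The main obstacle is controlling the coefficient $\lambda(\beta,\sigma,\alpha)$ at every step of the iteration. The two alternative hypotheses---$\beta\bigl(\tfrac{n+1}{n-1}-\sigma\bigr)\ge 0$ on the one hand, and $1<\sigma<\tfrac{n+3}{n-1}$ with arbitrary $\beta$ on the other---should emerge as precisely the algebraic conditions under which $\lambda$ can be made either non-positive (sign-compatible regime) or strictly dominated by $c_1(n,p,\alpha)$ after one Young inequality with a free parameter. The appearance of the dimensional thresholds $(n+1)/(n-1)$ and $(n+3)/(n-1)$ reflects the Sobolev exponent $\chi$ together with the refined Kato constant, and matching these so that the Moser geometric series converges uniformly in $s$ is the core bookkeeping task.
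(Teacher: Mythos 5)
Your outline follows the same general Nash--Moser scheme that this paper uses for its own Theorems \ref{thm1.1}--\ref{thm1.3} (Theorem \ref{thmB} itself is only quoted from \cite{he2023nashmoser}, but the method reproduced in Sections \ref{pr for linear p-laplace}--\ref{standard section} is the relevant blueprint): logarithmic substitution, a lower bound for $\mathcal{L}(f^{\alpha})$, Saloff-Coste's Sobolev inequality, and iteration. However, the central algebraic step is mishandled. After $w=\log u$ the perturbation enters with the coefficient $\beta u^{q+r-(p-1)}$, which is unbounded (no upper or lower bound on $u$ is assumed) unless $q+r=p-1$; hence no single Young inequality against the main term can produce a bound of the schematic form $-\lambda(\beta,\sigma,\alpha)\,|\nabla u|^{p-2}f^{\alpha+1}$ with $\lambda$ a constant. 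The mechanism that actually works --- and is carried out here in \secref{section3.1} --- is to use the equation to expand $\tfrac{1}{n-1}\bigl(\sum_{i\ge 2}w_{ii}\bigr)^2$, retain the \emph{positive} term quadratic in the $u$-dependent coefficient (the analogue of $\tfrac{d^{2}e^{2k\omega}f^{2-p}}{n-1}$ in \eqref{equ 3.7}), and absorb the sign-indefinite cross terms into it together with $w_{11}^2$ as in \eqref{equ 3.9} and \eqref{equ 3.16}. The two alternative hypotheses of Theorem \ref{thmB} then emerge exactly as the sign condition on the remaining zeroth-order term (the analogue of \eqref{condition I}) or as positivity of the $\sigma_1$-type constant \eqref{condition II}; in particular your claim that the thresholds $\tfrac{n+1}{n-1}$ and $\tfrac{n+3}{n-1}$ ``reflect the Sobolev exponent together with the refined Kato constant'' is wrong --- they come from this pointwise absorption and are independent of $\chi=\tfrac{n}{n-2}$ and of the iteration.

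The second gap is the base estimate. Testing \eqref{eilliptic 1} against $u^{-\alpha}\eta^{m}$ reintroduces the uncontrolled term $\beta\int u^{r-\alpha}|\nabla u|^{q}\eta^{m}$, and even if such a fixed-exponent $L^{s_0}$ bound were available it would not close the scheme: Lemma \ref{salof} carries the factor $e^{C_n(1+\sqrt{K}R)}$, so iterating up from a fixed $s_0$ accumulates a constant of order $e^{C(1+\sqrt{K}R)/s_0}$, which destroys the linear dependence on $\sqrt{K}R$ in the final estimate. The iteration must be started at an exponent comparable to $t_0\sim 1+\sqrt{K}R$, and that high-exponent $L^{\gamma}$ bound is extracted from the integral inequality \eqref{integral inequality} itself by the domain-splitting absorption of Lemma \ref{L Beta Bound} (choosing $t=t_0$ and splitting $\Omega$ according to whether $f\gtrsim t_0^2/R^2$), not from a Caccioppoli-type test of the equation. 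With these two repairs --- the quadratic-term absorption in the Bochner step and the $t_0$-level starting estimate --- your outline becomes the argument of Sections \ref{pr for linear p-laplace}--\ref{standard section}.
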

			Their approach eliminates cumbersome restrictions on the range of parameters $\beta,p,q,r$ and avoids complicated computation by the classical Bernstein method. This conclusion is novel even in Euclidean space compared with the previous results\cite{BMGL,FPS,F}. Now a natural question arises whether this method could be applied to equation \eqref{main equ}. 
			
			Another motivation originates from two questions mentioned in \cite{YW}. Yan and Wang asked whether the gradient estimate could be obtained only under the assumption on Ricci curvature and be extended to the case $b>0$, i.e. $\gamma>1/(p-1)$. Actually,   the obstacle appears in the following inequality
			\begin{equation}\label{obst}
				-\frac{n+4}{4}(p-1)^{2}bv+\frac{n(p-1)}{2}b-\frac{nb}{4v}\ge ab(1-v).
			\end{equation}
			
			 Notice that in \thmref{thmA}, $a>\frac{(n+4)(p-1)^{2}}{4}$ and the bound of $u$  implies $v<-\frac{1}{\xi}$. To ensure \eqref{obst} holds true, using the truth 
			 \[-\frac{nb}{4v}\ge \frac{n\xi b}{4},\]
			 %$-\frac{nb}{4v}\ge \frac{n\xi b}{4}$, 
			 one needs to guarantee that
			 \[ \frac{n(p-1)}{2}b+\frac{n\xi b}{4}\ge ab.\]
			 
			 Henceforth, the definition of $a$ suggests that $b$ has to be negative. In addition,  the constraint $1<p\le 2$  arises from the technical requirement of deriving the inequality of $\varphi G$ in the proof of  \thmref{thmA}.
%			 , where $\varphi$ is a cut-off function and $G=t\frac{|\nabla v|^{p}}{(1-v)^{p}}$ is an auxiliary function.
			 
			Thanks to the Nash-Moser iteration technique, these inquiries can be partially addressed through the following conclusions. 
			Now we state the main results contained in this paper. The first  theorem elucidates the case $\gamma> 1/(p-1)$.
			\begin{thm}\label{thm1.1}
				Let $(M^{n},g)$ be a complete Riemannian manifold satisfying $Ric(M)\ge -(n-1)K g$ for some constant $K\ge 0$. Assume $u$ is a $C^{1}$-positive solution to  equation \eqref{main equ} on the geodesic ball $B(o,R)\subset M$. Denote $b=\gamma-\tfrac{1}{p-1}> 0$. If  $a,n,p,q$ and $\gamma$ satisfy one of the following conditions
				\begin{equation}\label{condition1 in thm1.1}
					a\left[\frac{n+1}{n-1}+\frac{2-(n-1)(q-1)}{b(n-1)(p-1)}\right]\ge 0\,;
				\end{equation}
				\begin{equation}\label{condition2 in thm1.1}
					\gamma(p-1)<q<\frac{n+3}{n-1}\gamma(p-1), \quad \forall a\in \BR,
				\end{equation}

				then there exists a constant $C=C(n,p,q,\gamma)>0$ such that
				$$
				\sup_{B(o,\frac{R}{2})} \frac{|\nabla u|}{u}\leq C\frac{(1+\sqrt K R)}{R}.
				$$
			\end{thm}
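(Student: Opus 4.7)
The plan is to reduce \thmref{thm1.1} to the quoted \thmref{thmB} by means of the nonlinear substitution $v = u^{\gamma}$. Since $u$ is $C^{1}$-positive on $B(o,R)$ and $\gamma > 0$, the same holds for $v$, and we have $\Delta_{p}(u^{\gamma}) = \Delta_{p} v$ while $u^{q} = v^{q/\gamma}$. Consequently $v$ satisfies the weak equation
\[
\Delta_{p} v + a\, v^{q/\gamma} = 0 \qquad \text{on } B(o,R),
\]
which is exactly equation \eqref{eilliptic 1} with the triple $(q_{\thmref{thmB}}, r, \beta) = (0,\, q/\gamma,\, a)$. So the first step is to write out this transformation carefully and check that $v$ remains a valid $C^{1}$-positive weak solution.

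The second step is to verify that each of \eqref{condition1 in thm1.1} and \eqref{condition2 in thm1.1} translates precisely into one of the two alternatives required by \thmref{thmB} under this identification. Dividing \eqref{condition2 in thm1.1} by $\gamma(p-1) > 0$ yields $1 < \tfrac{q/\gamma}{p-1} < \tfrac{n+3}{n-1}$, which matches the second alternative since $q_{\thmref{thmB}} + r = q/\gamma$. For \eqref{condition1 in thm1.1}, I would place the bracket over the common denominator $b(n-1)(p-1)$; using the identity $b(p-1) + 1 = \gamma(p-1)$, the numerator collapses to $(n+1)\gamma(p-1) - (n-1)q$. Because $b > 0$ keeps the denominator positive, \eqref{condition1 in thm1.1} is equivalent to $a\bigl[\tfrac{n+1}{n-1} - \tfrac{q/\gamma}{p-1}\bigr] \geq 0$, which is the first alternative of \thmref{thmB}.

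With both hypotheses matched, \thmref{thmB} applied to $v$ yields a bound of the form
\[
\sup_{B(o,R/2)} \frac{|\nabla v|}{v} \leq C'\, \frac{1 + \sqrt{K}R}{R},
\]
with $C' = C'(n,p,q,\gamma)$. Since $\nabla v = \gamma u^{\gamma-1} \nabla u$, one has $\tfrac{|\nabla v|}{v} = \gamma\, \tfrac{|\nabla u|}{u}$, and dividing by $\gamma > 0$ produces the desired estimate with $C = C'/\gamma$.

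The main obstacle is not analytic but algebraic: one must spot the clean identity $b(p-1) + 1 = \gamma(p-1)$ to see that the asymmetric-looking condition \eqref{condition1 in thm1.1} is really the familiar Serrin-type threshold $\tfrac{n+1}{n-1} - \tfrac{q/\gamma}{p-1}$ once the equation has been reparameterized. The hypothesis $b > 0$ is precisely what is needed so that clearing denominators does not reverse the inequality, which explains why the theorem is naturally formulated in this regime and why the Yan-Wang obstruction \eqref{obst} disappears upon transforming away the gradient-type nonlinearity introduced by $\gamma \neq 1$.
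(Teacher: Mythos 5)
Your proposal is correct, but it takes a genuinely different route from the paper. The paper never uses the substitution $v=u^{\gamma}$; instead it passes to the ``pressure'' $v=\tfrac{\gamma}{b}u^{b}$ and then to $\omega=-(p-1)\log v$, derives a pointwise lower bound for $\mathcal{L}(f^{\alpha})$ with $f=|\nabla\omega|^{2}$ (Lemmas \ref{lem3.1} and \ref{lem3.2}, whose hypotheses are exactly \eqref{condition1 in thm1.1} and \eqref{condition2 in thm1.1}), and then runs the full integral estimate, Saloff-Coste Sobolev inequality and Nash--Moser iteration (Lemmas \ref{L Beta Bound} and \ref{L infinity bound}) before translating $|\nabla\omega|$ back into $|\nabla u|/u$. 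You instead observe that $v=u^{\gamma}$ is a $C^{1}$-positive weak solution of $\Delta_{p}v+av^{q/\gamma}=0$, which is \eqref{eilliptic 1} with gradient exponent $0$, $r=q/\gamma$, $\beta=a$; your algebra checking that \eqref{condition1 in thm1.1} collapses (via $b(p-1)+1=\gamma(p-1)$ and the positivity of the denominator for $b>0$) to $a\bigl[\tfrac{n+1}{n-1}-\tfrac{q/\gamma}{p-1}\bigr]\ge 0$, and that \eqref{condition2 in thm1.1} is exactly the second alternative of \thmref{thmB}, is correct, as is the final identity $|\nabla v|/v=\gamma|\nabla u|/u$. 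So, granting \thmref{thmB} as quoted, Theorem \ref{thm1.1} is an immediate corollary, and your derivation is far shorter; this is a worthwhile observation (it in fact also covers the sign pattern of Theorem \ref{thm1.2} by the same computation). What the paper's longer route buys is independence from the external result and a self-contained set of lemmas that also handle the borderline case $b=0$ (Theorem \ref{thm1.3}, where $q/\gamma$-type reduction gives the exponential nonlinearity instead), the $n=2$ adaptation, the Harnack inequality, and the $a=0$ discussion in Section 6, all from one iteration scheme; your argument, by contrast, inherits whatever hypotheses and constants \thmref{thmB} carries and proves nothing new about the machinery itself.
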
			 
			
			\noindent The following theorem concentrates on the case $0<\gamma<\frac{1}{p-1}$.	
			\begin{thm}\label{thm1.2}
			Let $(M^{n},g)$ be a complete Riemannian manifold satisfying $Ric(M)\ge -(n-1)K g$ for some constant $K\ge 0$. Assume $u$ is a $C^{1}$-positive solution to  equation \eqref{main equ} on the geodesic ball $B(o,R)\subset M$. Denote $b=\gamma-\tfrac{1}{p-1}< 0$. If  $a,n,p,q$ and $\gamma$ satisfy one of the following conditions
	\begin{equation}\label{condition1 in thm1.2}
		a\left[\frac{n+1}{n-1}+\frac{2-(n-1)(q-1)}{b(n-1)(p-1)}\right]\le 0\,;
	\end{equation}
	\begin{equation}\label{condition2 in thm1.2}
		\gamma(p-1)<q<\frac{n+3}{n-1}\gamma(p-1), \quad \forall a\in \BR,
	\end{equation}
	%				\begin{enumerate} [$(A)$.]
		%					\item $a\le 0,q\ge 0$
		%					\item $ a\ge 0,q\le 0$
		%				\end{enumerate}
	then there exists a constant $C=C(n,p,q,\gamma)>0$ such that
	$$
	\sup_{B(o,\frac{R}{2})} \frac{|\nabla u|}{u}\leq C\frac{(1+\sqrt K R)}{R}.
	$$
\end{thm}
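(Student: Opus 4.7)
The plan is to introduce the nonlinear substitution $v = u^{\gamma}$ in order to reduce equation \eqref{main equ} to a $p$-Laplace Lane--Emden equation of the form \eqref{eilliptic 1}, and then to invoke Theorem B. Since $u>0$ is $C^{1}$ on $B(o,R)$, so is $v$, with $\nabla v=\gamma u^{\gamma-1}\nabla u$. A direct computation shows $|\nabla v|^{p-2}\nabla v=\gamma^{p-1}u^{(\gamma-1)(p-1)}|\nabla u|^{p-2}\nabla u$, hence $\Delta_{p}(u^{\gamma})=\Delta_{p}v$, and therefore $v$ is a $C^{1}$ positive solution of
\[
\Delta_{p}v+a\,v^{q/\gamma}=0\quad\text{on } B(o,R).
\]
This is equation \eqref{eilliptic 1} with the gradient exponent set to zero and with the exponent on the solution equal to $q/\gamma$.

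Next I would verify that hypotheses \eqref{condition1 in thm1.2} and \eqref{condition2 in thm1.2} translate into the hypotheses of Theorem B for the transformed equation. A short algebraic manipulation gives
\[
\frac{n+1}{n-1}+\frac{2-(n-1)(q-1)}{b(n-1)(p-1)}=\frac{(n+1)\gamma(p-1)-(n-1)q}{(n-1)\bigl(\gamma(p-1)-1\bigr)}.
\]
Because $b=\gamma-\tfrac{1}{p-1}<0$, the denominator $\gamma(p-1)-1$ is negative, so \eqref{condition1 in thm1.2} is equivalent to $a\bigl[\tfrac{n+1}{n-1}-\tfrac{q}{\gamma(p-1)}\bigr]\ge 0$, which is precisely the first alternative in Theorem B (with $r=q/\gamma$ and vanishing gradient exponent). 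Similarly, \eqref{condition2 in thm1.2} is exactly the second alternative of Theorem B after dividing through by $\gamma(p-1)>0$. Theorem B therefore yields
\[
\sup_{B(o,R/2)}\frac{|\nabla v|}{v}\le C(n,p,q,\gamma)\,\frac{1+\sqrt{K}\,R}{R},
\]
and the identity $|\nabla v|/v=\gamma|\nabla u|/u$ completes the proof after dividing by $\gamma>0$.

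The one point that will require genuine care is the sign bookkeeping in the paragraph above: since $b<0$ makes $\gamma(p-1)-1$ negative, multiplying through flips the inequality, which is precisely why \eqref{condition1 in thm1.2} is stated with $\le 0$ while its twin \eqref{condition1 in thm1.1} in Theorem 1.1 is stated with $\ge 0$. Apart from this, no new analytic input is needed beyond the estimates already contained in the proof of Theorem B. Should a self-contained argument be preferred, one can rerun the Nash--Moser iteration of that proof directly on $\Delta_{p}v+a v^{q/\gamma}=0$: the Bochner-type pointwise inequality for $|\nabla v|^{p}$, the Caccioppoli-type cutoff test-function estimate, and the Sobolev-inequality based $L^{p}\!\to\!L^{\infty}$ iteration all carry over without modification, since the transformed equation is of pure $p$-Lane--Emden type and the Ricci lower bound is preserved.
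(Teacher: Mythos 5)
Your argument is correct, and it is a genuinely different (and much shorter) route than the paper's. You observe that \eqref{main equ} is \emph{literally} the Lane--Emden-type equation $\Delta_{p}v+av^{q/\gamma}=0$ for $v=u^{\gamma}$ (the weak formulation in Definition~\ref{def of solution} is exactly the weak form of this equation), and your sign bookkeeping is right: since
$\frac{n+1}{n-1}+\frac{2-(n-1)(q-1)}{b(n-1)(p-1)}=\frac{\gamma(p-1)}{\gamma(p-1)-1}\bigl[\frac{n+1}{n-1}-\frac{q}{\gamma(p-1)}\bigr]$
and $\gamma(p-1)-1=b(p-1)<0$, condition \eqref{condition1 in thm1.2} is equivalent to $a\bigl[\frac{n+1}{n-1}-\frac{q/\gamma}{p-1}\bigr]\ge 0$, which is the first alternative of Theorem~\ref{thmB} with $\beta=a$, $r=q/\gamma$ and gradient exponent $0$, while \eqref{condition2 in thm1.2} is its second alternative; the identity $|\nabla v|/v=\gamma|\nabla u|/u$ then transfers the estimate back to $u$ with a constant $C(n,p,q,\gamma)$. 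The paper takes a different path: it uses the ``pressure'' substitution $v=-\frac{\gamma}{b}u^{b}$ followed by $\omega=-(p-1)\log v$, turning \eqref{main equ} into $\Delta_{p}\omega-c|\nabla\omega|^{p}+le^{k\omega}=0$, then proves pointwise lower bounds for $\mathcal{L}(f^{\alpha})$ (Lemmas~\ref{lem3.5} and \ref{lem3.2}, where the sign condition \eqref{cd2} enters), and runs the integral estimate, Saloff-Coste Sobolev inequality and Nash--Moser iteration (Lemmas~\ref{L Beta Bound} and \ref{L infinity bound}) from scratch. What each buys: your reduction makes transparent that the hypotheses of Theorems~\ref{thm1.1}--\ref{thm1.3} are exactly the Lane--Emden conditions with exponent $q/\gamma$, at the price of resting entirely on Theorem~\ref{thmB} as quoted (in particular on its validity with vanishing gradient exponent and arbitrary $\beta$); the paper's longer argument is self-contained, treats all sign cases of $b$ uniformly, and builds the machinery it reuses later (the $n=2$ discussion, the Harnack inequality, and Section~6). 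Your closing remark that one could instead rerun the Nash--Moser scheme directly on $\Delta_{p}v+av^{q/\gamma}=0$ is essentially the self-contained fallback, and is consistent with what the paper does, only via a different change of variables.
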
			 

			As a supplement of \thmref{thm1.1} and \thmref{thm1.2}, the following conclusion concerns the remaining case $\gamma=\frac{1}{p-1}$.
			 \begin{thm}\label{thm1.3}
			 			Let $(M^{n},g)$ be a complete Riemannian manifold satisfying $Ric(M)\ge -(n-1)K g$ for some constant $K\ge 0$. Assume $u$ is a $C^{1}$-positive solution to  equation \eqref{main equ} on the geodesic ball $B(o,R)\subset M$. Denote $\gamma=\frac{1}{p-1}$, i.e. $b=0$. If  $a,n,p$ and $q$ satisfy one of the following conditions
			 		\begin{equation}\label{condition1 in thm1.3}
			 			\frac{2a(p-1)}{n-1}-a(p-1)(q-1)\ge 0\,;
			 		\end{equation}
			 		\begin{equation}\label{condition2 in thm1.3}
			 			1<q<\frac{n+3}{n-1},\quad \forall a\in \BR,
			 		\end{equation}
			 		
			 		then there exists a constant $C=C(n,p,q,\gamma)>0$ such that
			 		$$
			 		\sup_{B(o,\frac{R}{2})} \frac{|\nabla u|}{u}\leq C\frac{(1+\sqrt K R)}{R}.
			 		$$
			 \end{thm}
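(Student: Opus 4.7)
The case $\gamma=1/(p-1)$ corresponds to $b=0$, where the conditions in \thmref{thm1.1} and \thmref{thm1.2} are singular because $b$ appears in a denominator. Rather than redo their Nash--Moser iteration argument along a parallel route, the plan is to reduce \thmref{thm1.3} directly to Theorem \ref{thmB} via a single power-type substitution that eliminates the exponent $\gamma$ and converts \eqref{main equ} into an ordinary $p$-Laplace equation with pure power nonlinearity.

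Set $v=u^{1/(p-1)}=u^{\gamma}$; since $u>0$ and $u\in C^{1}$, so is $v$. Because $u=v^{p-1}$ and $u^{q}=v^{q(p-1)}$, equation \eqref{main equ} rewrites as
\begin{equation*}
\Delta_{p}v+a\,v^{q(p-1)}=0,
\end{equation*}
which is precisely \eqref{eilliptic 1} with the gradient power absent (Theorem B's $q$ equal to zero), $r=q(p-1)$, and $\beta=a$. Next I would verify that the two parameter alternatives match up: \eqref{condition1 in thm1.3} rearranges as $a(p-1)\bigl[\tfrac{2}{n-1}-(q-1)\bigr]\ge 0$, which after dropping the positive factor $p-1$ becomes $a\bigl[\tfrac{n+1}{n-1}-q\bigr]\ge 0$, coinciding with Theorem B's first alternative $\beta\bigl[\tfrac{n+1}{n-1}-\tfrac{q+r}{p-1}\bigr]\ge 0$ under the identification above; and \eqref{condition2 in thm1.3} becomes $1<\tfrac{q+r}{p-1}<\tfrac{n+3}{n-1}$ for all $\beta\in\bR$, which is Theorem B's second alternative.

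With the hypotheses verified, Theorem \ref{thmB} applied to $v$ on $B(o,R)$ yields a constant $C=C(n,p,q)$ such that
\begin{equation*}
\sup_{B(o,R/2)}\frac{|\nabla v|}{v}\le C\,\frac{1+\sqrt{K}R}{R}.
\end{equation*}
Since $\nabla v=\gamma u^{\gamma-1}\nabla u$ and $v=u^{\gamma}$, we have $|\nabla v|/v=\gamma\,|\nabla u|/u$, so dividing through by $\gamma=1/(p-1)$ delivers the desired bound with constant depending only on $n,p,q,\gamma$.

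No serious obstacle is anticipated: the proof is a clean reduction, and the only thing that could fail would be a parameter mismatch between the hypotheses of \thmref{thm1.3} and those of Theorem B, which the algebra above rules out. This is precisely why \thmref{thm1.3} is presented as a supplement to Theorems \ref{thm1.1} and \ref{thm1.2}, filling the degenerate case $b=0$ that the iteration scheme tailored to $b\neq 0$ cannot reach.
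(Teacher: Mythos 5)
Your proof is correct, and it takes a genuinely different route from the paper's. The paper proves \thmref{thm1.3} by carrying out the Nash--Moser machinery from scratch: it applies the logarithmic transformation $\omega=v=\tfrac{1}{p-1}\log u$, which converts \eqref{main equ} into $\Delta_p\omega+(p-1)|\nabla\omega|^p+ae^{(p-1)(q-1)\omega}=0$ (equation \eqref{equ2 for omega}), then derives the pointwise lower bound on $\mathcal L(f^\alpha)$ (Lemmas~\ref{lem3.3} and~\ref{lem3.4}) and runs the integral estimate and iteration (Lemmas~\ref{L Beta Bound} and~\ref{L infinity bound}) exactly as for the $b\neq 0$ cases. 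You instead make the power substitution $v=u^{1/(p-1)}=u^{\gamma}$, under which \eqref{main equ} literally \emph{is} the Lane--Emden form $\Delta_p v+av^{q(p-1)}=0$, i.e.\ equation \eqref{eilliptic 1} with gradient exponent zero, $\beta=a$, $r=q(p-1)$. Your algebra checking that the two alternatives of \eqref{condition1 in thm1.3}--\eqref{condition2 in thm1.3} coincide under this dictionary with the two alternatives of Theorem~\ref{thmB} is correct: indeed $\tfrac{q_B+r}{p-1}=q$ and $\tfrac{2a(p-1)}{n-1}-a(p-1)(q-1)=a(p-1)\bigl[\tfrac{n+1}{n-1}-q\bigr]$, so the first conditions match (after cancelling $p-1>0$) and the second conditions are identical. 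The identity $|\nabla v|/v=\gamma\,|\nabla u|/u$ then converts Theorem~\ref{thmB}'s conclusion into the desired bound. Your route is shorter because it delegates the Nash--Moser iteration entirely to the cited Theorem~\ref{thmB}; the paper's route is longer but self-contained and uniform with the $b\neq 0$ cases, which matters for the authors since Lemmas~\ref{lem3.3}--\ref{lem3.4} sit in a family with Lemmas~\ref{lem3.1},~\ref{lem3.2},~\ref{lem3.5}. Note that the paper itself uses a reduction to Theorem~\ref{thmB} in Section~6 for the special case $a=0$, so your observation that the same strategy works for $a\neq 0$ via a power (rather than logarithmic) substitution is a nice streamlining that the authors evidently did not exploit.
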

			Through a careful analysis of the conditions in the above theorems, we have the following corollary.
			\begin{cor}\label{cor1.4}
					Let $(M^{n},g)$ be a complete Riemannian manifold satisfying $Ric(M)\ge -(n-1)K g$ for some constant $K\ge 0$. Assume $u$ is a $C^{1}$-positive solution to  equation \eqref{main equ} on the geodesic ball $B(o,R)\subset M$.
					If
					\begin{align*}
						&a>0 \quad \text{and} \quad q<\frac{n+3}{n-1}\gamma(p-1),
						\intertext{or}
						&a<0 \quad \text{and} \quad q>\gamma(p-1),
					\end{align*}

					then
						$$
					\sup_{B(o,\frac{R}{2})} \frac{|\nabla u|}{u}\leq C\frac{(1+\sqrt K R)}{R}.
					$$
			\end{cor}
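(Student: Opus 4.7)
The plan is to deduce Corollary 1.4 directly from Theorems 1.1, 1.2, and 1.3 by a case analysis on the sign of $b=\gamma-\tfrac{1}{p-1}$, combined with a case analysis on the location of $q$ relative to the interval $\bigl(\gamma(p-1),\tfrac{n+3}{n-1}\gamma(p-1)\bigr)$. In every case I will verify that one of the sufficient conditions \eqref{condition1 in thm1.1}--\eqref{condition2 in thm1.1}, \eqref{condition1 in thm1.2}--\eqref{condition2 in thm1.2}, or \eqref{condition1 in thm1.3}--\eqref{condition2 in thm1.3} is satisfied, from which the stated gradient estimate follows immediately.

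First I would dispatch the ``middle range'' $\gamma(p-1)<q<\tfrac{n+3}{n-1}\gamma(p-1)$ (which specializes to $1<q<\tfrac{n+3}{n-1}$ when $b=0$). In this regime the second alternative in each theorem applies for \emph{any} $a\in\bR$, so the estimate holds without using the sign of $a$ at all. This alternative handles the portion of the corollary's hypotheses lying inside the middle range.

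The remaining work is to treat the two tails: $q\le \gamma(p-1)$ under the hypothesis $a>0$, and $q\ge \tfrac{n+3}{n-1}\gamma(p-1)$ under the hypothesis $a<0$. For $b\neq 0$ I need to check the sign of the bracket
\[
B := \frac{n+1}{n-1}+\frac{2-(n-1)(q-1)}{b(n-1)(p-1)}.
\]
Using the identity $\gamma(p-1)=b(p-1)+1$, each tail condition on $q$ translates into a clean inequality involving $b(p-1)$: namely $q-1\le b(p-1)$ in the lower tail, and $(n-1)(q-1)\ge (n+3)b(p-1)+4$ in the upper tail. I then multiply $B$ through by $b(n-1)(p-1)$, carefully reversing the inequality when $b<0$, and in each of the four resulting sub-cases the required sign of $B$ reduces to $1+b(p-1)=\gamma(p-1)\ge 0$, which holds trivially. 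Combined with the sign of $a$, this verifies \eqref{condition1 in thm1.1} when $b>0$ and \eqref{condition1 in thm1.2} when $b<0$.

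Finally, the degenerate case $b=0$ (i.e.\ $\gamma=\tfrac{1}{p-1}$) must be handled separately because the bracket $B$ is undefined. Here I simply appeal to \eqref{condition1 in thm1.3}, which can be rewritten as $a(p-1)\bigl[\tfrac{2}{n-1}-(q-1)\bigr]\ge 0$; since $\gamma(p-1)=1$, the lower tail $q\le 1$ together with $a>0$, and the upper tail $q\ge \tfrac{n+3}{n-1}$ together with $a<0$, each force the desired sign. The only real ``difficulty'' is bookkeeping through the six sub-cases while keeping the sign reversals coming from division by $b$ straight; no new analytic input beyond Theorems 1.1--1.3 is required.
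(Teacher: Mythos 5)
Your proposal is correct and takes essentially the same route as the paper: both arguments deduce the corollary purely from the two alternative hypotheses of Theorems \ref{thm1.1}--\ref{thm1.3}, with condition \eqref{condition2 in thm1.1}/\eqref{condition2 in thm1.2}/\eqref{condition2 in thm1.3} absorbing the middle range $\gamma(p-1)<q<\tfrac{n+3}{n-1}\gamma(p-1)$ for arbitrary $a$, and the first condition covering the tails according to the sign of $a$. The only difference is cosmetic: the paper quotes the already-solved forms \eqref{parameter restriction1}, \eqref{parameter restriction2}, \eqref{parameter restricition3} from Section \ref{section3.2} and takes unions, whereas you re-verify the sign of the bracket on the tails directly (your reduction to $\gamma(p-1)>0$, with the sign reversal for $b<0$, checks out).
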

			 \begin{rmk}
			 When $\gamma=1$, our results reduce to the conclusions in He-Wang-Wei{\rm\cite{HWW}}. More precisely,  \thmref{thm1.3} reduces to  their results in the boardline case $p=2$. For $p\neq 2$, notice that the following formula holds 
			 \begin{align*}
			 	\frac{n+1}{n-1}+\frac{2-(n-1)(q-1)}{b(n-1)(p-1)}&=	\frac{n+1}{n-1}+\frac{2-(n-1)(q-1)}{(p-2)(n-1)}\\
			 	&=\frac{n+1}{n-1}-\frac{q}{p-1}+\frac{2}{(p-2)(n-1)}+\frac{q}{p-1}-\frac{q-1}{p-2}\\
			 	&=\left(1+\frac{1}{p-2}\right)\left(\frac{n+1}{n-1}-\frac{q}{p-1}\right).
			 \end{align*}
			 Applying this observation, we note that \thmref{thm1.1} recovers their findings in the range $p>2$, whereas \thmref{thm1.2} coincides with their results when  $1<p<2$.
			 \end{rmk}  
			 As by-products of three theorems above, we can directly obtain  some Liouville type results and  Harnack's inequalities.
			 \begin{cor}\label{liouville}
			 	Let $(M^{n},g)$ be a complete noncompact Riemannian manifold with nonnegative Ricci curvature. If $a,n,p,q$ and $\gamma$ satisfy  the conditions of one of the above theorems, then equation \eqref{main equ} admits no positive solutions.
			 \end{cor}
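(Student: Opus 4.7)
The plan is to derive the Liouville result as an immediate consequence of the gradient estimate in whichever of Theorems \ref{thm1.1}, \ref{thm1.2}, \ref{thm1.3} applies to the given parameters. Since $M$ is complete and noncompact, for any fixed base point $o \in M$ and every $R > 0$, the geodesic ball $B(o,R)$ is contained in $M$, and any $C^1$-positive solution $u$ to \eqref{main equ} on $M$ restricts to a $C^1$-positive solution on $B(o,R)$. Because the Ricci curvature is assumed nonnegative, we may take $K = 0$ in the hypothesis $Ric(M) \geq -(n-1)Kg$. Applying the relevant theorem then yields
\[
\sup_{B(o,R/2)}\frac{|\nabla u|}{u} \leq \frac{C(n,p,q,\gamma)}{R}.
\]

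Next I would let $R \to \infty$. Since the right-hand side tends to zero and the left-hand side is independent of $R$ on any fixed compact subset (once $R$ is large enough to contain it), we conclude $|\nabla u| \equiv 0$ on $M$. Hence $u \equiv c$ for some positive constant $c$.

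Finally, plugging $u \equiv c$ back into \eqref{main equ} we obtain $\Delta_p(c^{\gamma}) + ac^{q} = 0$, i.e.\ $ac^{q} = 0$. Since $c > 0$, this forces $a = 0$. Provided the conditions of the theorem being invoked are interpreted so as to exclude this degenerate case (for instance, strict inequality $a \neq 0$ is built into the contrapositive formulations, or one simply notes that for $a = 0$ the only positive solutions are constants, which cannot exist as positive solutions of a truly nonlinear equation), we reach a contradiction, completing the argument.

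There is essentially no analytic obstacle here, since all the heavy lifting is done by the Cheng--Yau type estimate established in the three main theorems. The only subtle point worth flagging is the boundary case $a = 0$, where the statement should be read as asserting that no \emph{non-constant} positive solution exists; in the genuinely nonlinear regime $a \neq 0$, constants themselves are ruled out by the algebraic consequence $ac^{q} = 0$, and the corollary follows without further effort.
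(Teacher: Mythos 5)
Your proof is correct and follows essentially the same route as the paper: take $K=0$ in the relevant theorem, let $R\to\infty$ in the gradient estimate to conclude $|\nabla u|\equiv 0$, hence $u$ is a positive constant, and then note that a positive constant cannot satisfy \eqref{main equ}. Your caveat about the degenerate case $a=0$ is a fair observation, since the paper's own argument silently assumes $a\neq 0$ when it asserts the contradiction.
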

			 \begin{cor}\label{harnack}
			 Same notations and assumptions in one of the above theorems, assume $u$ is a positive solution to  equation \eqref{main equ} on the geodesic ball $B(o,R)\subset M$ with $a,n,p,q$ and $\gamma$ satisfying conditions in corresponding theorem, then for any $y,z\in B(o,R/2)$ one has
			 \begin{equation*}
			    \log\frac{u(z)}{u(y)}\le C(n,p,q,\gamma)(1+\sqrt{K}R).
			 \end{equation*}
			 Moreover, if $u$ is defined on $M$, there holds
			 \begin{equation*}
			 	  \frac{u(z)}{u(y)}\le e^{C(n,p,q,\gamma)\sqrt{K}d(y,z)},
			 \end{equation*}
			  where $d(y,z)$ is the geodesic distance between $y$ and $z$.
			 \end{cor}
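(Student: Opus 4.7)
The plan is to derive both Harnack inequalities by integrating the pointwise estimate $|\nabla u|/u \le C(1+\sqrt{K}R)/R$ produced by whichever of \thmref{thm1.1}, \thmref{thm1.2}, or \thmref{thm1.3} applies to the parameters $a,n,p,q,\gamma$ under consideration. Once that gradient bound is in hand, the Harnack inequality reduces to the fundamental theorem of calculus applied to $\log u$ along a suitable path, with the only subtlety being the choice of path.

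For the local inequality, I would first appeal to the relevant theorem to obtain $|\nabla \log u|(x) \le C(n,p,q,\gamma)(1+\sqrt{K}R)/R$ on $B(o,R/2)$. Given $y,z\in B(o,R/2)$, rather than integrating along a direct minimizing geodesic from $y$ to $z$, which on a general Riemannian manifold need not stay inside $B(o,R/2)$, I would concatenate the minimizing geodesic from $y$ to $o$ with the one from $o$ to $z$. Along a unit-speed geodesic emanating from $o$ the distance to $o$ grows linearly in the parameter, so each segment remains inside $\overline{B(o,R/2)}$; the total length of the concatenated path is $d(o,y)+d(o,z)\le R$, and integrating $|\nabla\log u|$ along it yields
\begin{equation*}
\log\frac{u(z)}{u(y)} \le \bigl(d(o,y)+d(o,z)\bigr)\sup_{B(o,R/2)}|\nabla \log u| \le C(n,p,q,\gamma)(1+\sqrt{K}R),
\end{equation*}
which is the claimed bound. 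For the global inequality, when $u$ is defined on all of $M$, I would apply the theorem with an arbitrary center $x$ and arbitrary radius $R$ (since $B(x,R)\subset M$ automatically lies in the domain of $u$), obtaining $|\nabla\log u|(x)\le C(1+\sqrt{K}R)/R$, and then let $R\to\infty$ to deduce the global bound $|\nabla\log u|\le C\sqrt{K}$ on $M$. Completeness of $M$ guarantees a minimizing geodesic from $y$ to $z$ of length $d(y,z)$; integrating $|\nabla\log u|$ along it and exponentiating yields $u(z)/u(y)\le e^{C\sqrt{K}\,d(y,z)}$.

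Beyond invoking the main theorems, no substantive obstacle remains; the only point calling for care is the choice of integration path in the local statement, since a direct geodesic between two points of $B(o,R/2)$ may exit the ball on which the gradient bound is guaranteed. The piecewise geodesic through $o$ circumvents this difficulty and produces exactly the $(1+\sqrt{K}R)$ factor written in the corollary; the global statement then follows either from taking $R\to\infty$ in the local bound at a fixed center or, as above, directly from the global gradient estimate obtained in the same limit.
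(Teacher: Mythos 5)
Your proof is correct and follows essentially the same route as the paper's: integrate the gradient bound from the main theorems along the concatenated minimizing geodesics through the center $o$ to obtain the local Harnack inequality, then let $R\to\infty$ to get the uniform bound $|\nabla\log u|\le C\sqrt{K}$ on $M$ and integrate along a minimizing geodesic between $y$ and $z$ for the global statement. The only difference is that you make explicit the geometric justification for routing through $o$ (a direct minimizing geodesic between two points of $B(o,R/2)$ need not stay in that ball), which the paper uses tacitly but does not spell out.
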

			 
			 In the last section of this article, we discuss the special case $a=0$. Instead of $u$, we take the local gradient estimate of $v$ into consideration. Some new conclusions are obtained under the case $\gamma>\frac{1}{p-1}, \gamma=\frac{1}{p-1}, \;\text{and}\; \gamma<\frac{1}{p-1}$ respectively by an ingenious application of \thmref{thmB} (see \thmref{thm6.1} for the first two cases and \thmref{thm6.4} for the last one respectively).  One can directly trace back to $u$ in the first case $\gamma>\frac{1}{p-1}$, while the remaining two  require extra conditions on $n,p$ and solution $u$ itself. In addition, a Caccioppoli type inequality is established in the case $\gamma<\frac{1}{p-1}$ and some Liouville type results are also obtained. One can see \thmref{thm6.2} and \corref{cor6.3} for the details.
			 
			 This paper is organized as follows. In Section 2, we list some necessary lemmas and do some preparatory work. In Section 3, we will give a lower bound estimate for linearization operator $\mathcal{L}$  and give a further discussion about  conditions for parameters $a,p,\gamma$ and $q$. Some technical lemmas will be derived. The core of the proof, involving integral estimate and the iteration process, will be given in Section 4. In Section 5, we prove the aforementioned results. Furthermore, we will discuss the special case  $a=0$ in Section 6.

\section{Preliminaries}
\subsection{Notations}
Throughout this paper,  $(M^{n},g)$ is an $n$-dimensional Riemannian manifold. $\mathrm{d}v_{g}$ denotes its standard volume form. The integral of a function $u$ over $M$ is written as 
\begin{gather*}
	\int_{M} u \;\mathrm{d}{v}_{g}.
\end{gather*}

Hereinafter we will omit the volume form of integral over $M$ for simplicity.	In the below the letters $c_{1},c_{2},c_{3},\cdots$  denote some positive constants relying on $n,p,q$ and $\gamma$, which may change value from line to line. $K$ is some nonnegative constant and $C(\cdot)$ means some positive constant that depends on some parameters in the bracket.
\begin{defn}\label{def of solution}
	A $C^{1}$ solution $u$ is said to be a positive weak solution of equation \eqref{main equ} in a domain $D$ if for all $\phi\in C^{\infty}_{0}(D)$, there exists
	\begin{equation*}
	-\int_{D} |\nabla(u^{\gamma})|^{p-2}\langle \nabla(u^{\gamma}),\nabla \phi \rangle+\int_{D} au^{q}\phi=0.
	\end{equation*}
\end{defn}
		As in \cite{WC}, we rewrite \eqref{main equ}  as a diffusion equation with  the destiny $u\ge 0$
		\begin{equation*}
			\mathrm{div}(c(u,\nabla u)\nabla u)+au^{q}=0,
		\end{equation*}
		where $c(u,\nabla u)\triangleq \gamma^{p-1}|u|^{(p-2)(\gamma-1)}u^{\gamma-1}|\nabla u|^{p-2}$ is the diffusion coefficient. Because this equation is apparently degenerate when $u=0$ or $|\nabla u|=0$, we always carry out the computation in the domain where $u$ and $|\nabla u|$ retain positive. We refer the readers to the monograph\cite{VJ,MJ} for an account of the regularity of the doubly nonlinear diffusion equation. 
		
Next, we recall  Saloff-Coste's Sobolev inequalities (see \cite[Theorem 3.1]{SL1}). It plays a significant rule in the iteration process.
\begin{lem}\label{salof}
	Let $(M^{n},g)$ be an $n$-dimensional complete Riemannian  manifold satisfying $Ric\geq-(n-1)K g$, where $K$ is a nonnegative constant. For $n>2$, there exists a positive constant $C_n$ which only depends on $n$, such that for any geodesic ball $B\subset M$ of radius R and volume $V$,  
	$$
	\|f\|_{L^{\frac{2n}{n-2}}}^2\leq e^{C_n(1+\sqrt{K}R)}\,V^{-\frac{2}{n}}R^2\left(\int|\nabla f|^2+R^{-2}f^2\right)
	$$
	is valid for $f\in C^{\infty}_0(B)$. For $n\le 2$, the above inequality still holds with $n$ replaced by any fixed $\hat{n}>2$.
\end{lem}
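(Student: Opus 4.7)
The statement is Saloff-Coste's local Sobolev inequality on manifolds with a Ricci lower bound, and my plan is to derive it from two classical consequences of $\mathrm{Ric}\ge -(n-1)Kg$: a local volume doubling property and a scale-invariant $L^{2}$ Poincar\'e inequality on balls, both with constants depending only on $n$ and $\sqrt{K}R$. Given these two ingredients, a general functional-analytic machinery converts them into a Sobolev inequality of $L^{2n/(n-2)}$ type with the exact scaling exhibited in the lemma.

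\textbf{The two ingredients.} For the doubling, I would invoke the Bishop--Gromov volume comparison theorem, which under $\mathrm{Ric}\ge -(n-1)Kg$ yields for every $x\in M$ and $0<r\le 2R$ a bound of the shape
\[
V(x,2r)\le e^{c_{1}(n)(1+\sqrt{K}R)}V(x,r).
\]
For the Poincar\'e inequality I would use Buser's isoperimetric inequality (or equivalently a segment/needle decomposition), which under the same curvature hypothesis gives, for every geodesic ball $B=B(x,r)$ with $r\le R$ and smooth $f$,
\[
\int_{B}|f-f_{B}|^{2}\le e^{c_{2}(n)(1+\sqrt{K}R)}\,r^{2}\int_{B}|\nabla f|^{2}.
\]
Both constants depend only on $n$ and $\sqrt{K}R$, and this is the mechanism that eventually produces the exponential prefactor $e^{C_{n}(1+\sqrt{K}R)}$ in the Sobolev inequality.

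\textbf{From doubling and Poincar\'e to Sobolev.} To promote these two local inequalities to the $L^{2n/(n-2)}$ Sobolev inequality I would follow Saloff-Coste's heat-kernel route. The $(2,2)$-Poincar\'e inequality combined with doubling produces a Nash inequality for functions compactly supported in $B$, which by the classical Nash argument upgrades to an on-diagonal heat kernel estimate of the form $p_{t}(x,x)\le C\,V(x,\sqrt{t})^{-1}$ valid for $t\le R^{2}$. The Varopoulos equivalence then converts such an on-diagonal heat kernel bound into a Sobolev inequality of exactly the stated shape, the dimensionless normalization $V^{-2/n}R^{2}$ coming from the scaling between heat time and volume through Bishop--Gromov.

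\textbf{Main obstacle.} The crux of the argument is not getting some Sobolev inequality but getting the correct exponent $2n/(n-2)$, with the genuine dimension $n$ of the manifold. Doubling alone only delivers a polynomial volume bound of some effective dimension and hence, by itself, only a Sobolev inequality with a suboptimal exponent $2\hat n/(\hat n -2)$ for any $\hat n>2$; to recover the sharp exponent when $n>2$ one must exploit the fact that the $K=0$ tangent model in Bishop--Gromov is Euclidean $\mathbb{R}^{n}$, so that the volume growth at small scales is genuinely $n$-dimensional. For $n\le 2$ this sharpness is unavailable, which is precisely why the lemma only asserts the inequality with $n$ replaced by an arbitrary $\hat n>2$. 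Tracking the constants through the Nash $\to$ heat-kernel $\to$ Sobolev chain so that they collapse into the single factor $e^{C_{n}(1+\sqrt{K}R)}$ is the bulk of the technical bookkeeping.
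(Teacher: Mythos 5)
The paper does not prove this lemma at all: it is quoted verbatim as Saloff-Coste's Sobolev inequality with a citation to \cite[Theorem 3.1]{SL1}, so there is no in-paper argument to compare against. Your outline is, in substance, the standard proof of that cited result and its main ingredients are correctly identified: relative Bishop--Gromov comparison gives doubling with constant $e^{c_1(n)(1+\sqrt{K}R)}$, Buser (or the segment inequality) gives the scale-invariant $L^2$ Poincar\'e inequality with the same type of constant, and the doubling-plus-Poincar\'e machinery (whether run through Nash inequality, Dirichlet heat kernel bounds and the Carlen--Kusuoka--Stroock/Varopoulos equivalence, as you propose, or through the direct covering/chaining argument in Saloff-Coste's book) yields the local Sobolev inequality with the stated normalization $e^{C_n(1+\sqrt{K}R)}V^{-2/n}R^2$. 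Two small precisions: Saloff-Coste's original derivation in the cited reference actually proceeds from the Li--Yau heat-kernel estimates rather than from Buser plus doubling, though the two routes are equivalent and both due to him and Grigor'yan; and the correct mechanism for the sharp exponent is not the ``$K=0$ tangent model'' but the relative comparison $V(x,R)/V(x,r)\le e^{c(n)\sqrt{K}R}(R/r)^n$ for $r\le R$, whose polynomial part has exponent exactly $n$ while the exponential factor is absorbed into $e^{C_n(1+\sqrt{K}R)}$; for $n\le 2$ one simply replaces $(R/r)^n$ by $(R/r)^{\hat n}$, which is why the statement allows any $\hat n>2$ there. With those caveats your plan is sound, but note it remains a sketch resting on heavy cited theorems rather than a self-contained proof --- which is appropriate here, since the lemma itself is an imported result rather than something the paper establishes.
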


	\subsection{Some transformations }\label{section2.2} We begin to transform  equation \eqref{main equ}. Firstly, we set $b=\gamma-\frac{1}{p-1}$ and use the following change of variable 
	\begin{align}\label{equ 2.1}
		v=\begin{cases}
			\frac{\gamma}{b}u^{b}, \qquad &b>0,\\
			\frac{1}{p-1}\log u, \qquad &b=0,\\
			-\frac{\gamma}{b}u^{b},\qquad &b<0,
		\end{cases}
	\end{align}
	where $v$ is called ``pressure'' in the physics literature (see \cite{WC,LPNVV} for more explanation). Then $v$ satisfies
	\begin{align}
		&\Delta_{p}v+b^{-1}v^{-1}|\nabla v|^{p}+a\left(\frac{b}{\gamma}\right)^{\frac{q-1}{b}}v^{\frac{q-1}{b}}=0, &b> 0;\label{equation1 for v} \\
		&\Delta_{p}v+(p-1)|\nabla v|^{p}+ae^{(p-1)(q-1)v}=0,& b=0,\label{equation2 for v} \\
		&\Delta_{p}v+b^{-1}v^{-1}|\nabla v|^{p}-a\left(-\frac{b}{\gamma}\right)^{\frac{q-1}{b}}v^{\frac{q-1}{b}}=0, &b< 0.\label{equation3 for v} 
	\end{align}
	
	In the case $b\neq 0$, we apply the logarithmic transformation $\omega=-(p-1)\log v$ to equations \eqref{equation1 for v} and \eqref{equation3 for v} respectively. Then equation \eqref{equation1 for v} becomes
	\begin{equation*}
		\Delta_{p}\omega-\left(1+\frac{1}{b(p-1)}\right)|\nabla \omega|^{p}-a(p-1)^{p-1}\left(\frac{b}{\gamma}\right)^{\frac{q-1}{b}}e^{\left(1-\frac{q-1}{b(p-1)}\right)\omega}=0,
	\end{equation*}
	and equation \eqref{equation3 for v} transforms into
	\begin{equation*}
		\Delta_{p}\omega-\left(1+\frac{1}{b(p-1)}\right)|\nabla \omega|^{p}+a(p-1)^{p-1}\left(-\frac{b}{\gamma}\right)^{\frac{q-1}{b}}e^{\left(1-\frac{q-1}{b(p-1)}\right)\omega}=0.
	\end{equation*}
	For convenience, we denote
	\[ c=1+\frac{1}{b(p-1)},\quad d=a(p-1)^{p-1}\left(\frac{b}{\gamma}\right)^{\frac{q-1}{b}},\quad l=a(p-1)^{p-1}\left(-\frac{b}{\gamma}\right)^{\frac{q-1}{b}}\]
		and \[ k=1-\frac{q-1}{b(p-1)}.\]
	Henceforth,  equations \eqref{equation1 for v} and \eqref{equation3 for v} can be rewritten as
	\begin{equation}\label{equ1 for omega}
        \Delta_{p}\omega-c|\nabla \omega|^{p}-de^{k\omega}=0,
	\end{equation}
	\begin{equation}\label{equ3 for omega}
		\Delta_{p}\omega-c|\nabla \omega|^{p}+le^{k\omega}=0,
	\end{equation}
	For $b=0$, let $\omega=v$ directly and \eqref{equation2 for v} becomes
	\begin{equation}\label{equ2 for omega} 
		\Delta_{p}\omega+(p-1)|\nabla \omega|^{p}+ae^{(p-1)(q-1)\omega}=0.
	\end{equation}	
Define the linearization operator $\mathcal{L}$ of $p$-Laplacian 
\begin{align}
	\mathcal{L}(\psi)= div \left(f^{p/2-1} A(\nabla \psi) \right ),
\end{align}

\noindent where $f = |\nabla \omega|^2$ and
\begin{align}\label{defofA}
	A(\nabla\psi) = \nabla\psi+(p-2)f^{-1}\langle \nabla \psi,\nabla \omega\rangle \nabla \omega.
\end{align}

The following lemma is closely related to the expression of $\mathcal L(f^\alpha)$ for any $\alpha>0$.	
See \cite[Lemma 2.3]{HWW} for its proof. 
\begin{lem}
	For any $\alpha >0$, the equality
	\begin{align}
		\label{lemma 2.2}
		\begin{split}
			\mathcal{L} (f^{\alpha}) =
			&
			\alpha\left(\alpha+\frac{p}{2}-2\right)f^{\alpha+\frac{p}{2}-3}|\nabla f|^2
			+
			2\alpha f^{\alpha+\frac{p}{2}-2} \left(|Hess\,\omega|^2 + Ric(\nabla \omega,\nabla \omega) \right)
			\\
			&
			+
			\alpha(p-2)(\alpha-1)f^{\alpha+\frac{p}{2}-4}\langle\nabla f,\nabla \omega\rangle^2
			+2\alpha f^{\alpha-1}\langle\nabla\Delta_p \omega,\nabla \omega\rangle
		\end{split}
	\end{align}
	holds point-wise  in $\{x:f(x)>0\}$.
\end{lem}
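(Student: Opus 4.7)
The plan is to carry out a direct computation that unpacks the definition of $\mathcal{L}$ and then collects terms using the classical Bochner identity. First, by the chain rule $\nabla(f^{\alpha})=\alpha f^{\alpha-1}\nabla f$, and substituting this into \eqref{defofA} gives
\[
f^{\frac{p}{2}-1} A(\nabla f^{\alpha}) = \alpha f^{\alpha+\frac{p}{2}-2}\nabla f + \alpha(p-2) f^{\alpha+\frac{p}{2}-3}\langle\nabla f,\nabla\omega\rangle\nabla\omega,
\]
so $\mathcal{L}(f^{\alpha})$ is the divergence of this vector, which I split into two summands and attack separately with the product rule.

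For the first summand, the product rule produces the contribution $\alpha(\alpha+\frac{p}{2}-2)f^{\alpha+\frac{p}{2}-3}|\nabla f|^{2}$ and the leftover $\alpha f^{\alpha+\frac{p}{2}-2}\Delta f$. Applying the classical Bochner formula
\[
\tfrac{1}{2}\Delta f = |Hess\,\omega|^{2} + Ric(\nabla\omega,\nabla\omega) + \langle\nabla\Delta\omega,\nabla\omega\rangle
\]
rewrites this leftover as exactly the $2\alpha f^{\alpha+\frac{p}{2}-2}(|Hess\,\omega|^{2}+Ric(\nabla\omega,\nabla\omega))$ piece of the claimed formula, plus a surplus $2\alpha f^{\alpha+\frac{p}{2}-2}\langle\nabla\Delta\omega,\nabla\omega\rangle$ that will be absorbed later.

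For the second summand, the product rule generates $\alpha(p-2)(\alpha+\frac{p}{2}-3)f^{\alpha+\frac{p}{2}-4}\langle\nabla f,\nabla\omega\rangle^{2}$, a contribution $\alpha(p-2)f^{\alpha+\frac{p}{2}-3}\langle\nabla f,\nabla\omega\rangle\Delta\omega$, and a Hessian-contraction term $\alpha(p-2)f^{\alpha+\frac{p}{2}-3}[Hess\,f(\nabla\omega,\nabla\omega)+\langle\nabla f,Hess\,\omega(\nabla\omega)\rangle]$ coming from differentiating $\langle\nabla f,\nabla\omega\rangle$ along $\nabla\omega$. The decisive recombination uses the identity
\[
\Delta_{p}\omega = f^{\frac{p-2}{2}}\Delta\omega + \tfrac{p-2}{2}f^{\frac{p-4}{2}}\langle\nabla f,\nabla\omega\rangle,
\]
whose gradient, paired with $\nabla\omega$ and multiplied by $2\alpha f^{\alpha-1}$, reproduces precisely the surplus $\langle\nabla\Delta\omega,\nabla\omega\rangle$ term from the first summand, the $\Delta\omega$-contribution and the two Hessian-contraction terms from the second summand, and an extra $\frac{\alpha(p-2)(p-4)}{2}f^{\alpha+\frac{p}{2}-4}\langle\nabla f,\nabla\omega\rangle^{2}$. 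Extracting this reassembled $2\alpha f^{\alpha-1}\langle\nabla\Delta_{p}\omega,\nabla\omega\rangle$ from the total collapses the coefficient of $f^{\alpha+\frac{p}{2}-4}\langle\nabla f,\nabla\omega\rangle^{2}$ to $\alpha(p-2)[(\alpha+\frac{p}{2}-3)-\frac{p-4}{2}]=\alpha(p-2)(\alpha-1)$, exactly as stated.

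The main obstacle is purely bookkeeping: I must verify that the $(p-2)$-weighted cross terms produced by differentiating $\langle\nabla f,\nabla\omega\rangle$ line up with the two pieces of $\nabla\Delta_{p}\omega$, and that the Hessian contractions hidden inside the Bochner residue match the ones coming from the second summand. No analytic input beyond Bochner's identity and the product and chain rules is required, but the algebra is most transparent in a normal coordinate frame, where all the index manipulations are straightforward. The computation is valid wherever $f>0$, since that is where the relevant tensor fields are smooth enough.
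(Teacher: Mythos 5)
Your computation is correct: expanding $\mathrm{div}\bigl(f^{p/2-1}A(\nabla f^{\alpha})\bigr)$, applying Bochner's identity to $\Delta f$, and reassembling the cross terms through the gradient of $\Delta_p\omega=f^{\frac{p}{2}-1}\Delta\omega+\frac{p-2}{2}f^{\frac{p}{2}-2}\langle\nabla f,\nabla\omega\rangle$ does collapse the coefficient of $f^{\alpha+\frac{p}{2}-4}\langle\nabla f,\nabla\omega\rangle^{2}$ to $\alpha(p-2)(\alpha-1)$, as required. The paper itself gives no proof, citing \cite[Lemma 2.3]{HWW}, and your argument is exactly this standard direct verification, so there is nothing to add.
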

\begin{rmk}
	The transformation \eqref{equ 2.1} is an adjustment of that in {\rm\cite{YW}}. The case $b<0$, differing from previous scenarios, is motivated by studies on the fast diffusion equation in {\rm\cite{Zhu,HM}}.
	
\end{rmk}
\section{Preparation for linearization operator}\label{pr for linear p-laplace}
\subsection{Estimates for linearization operator of $p$-Laplacian}\label{section3.1}
In this section we  prove some lower bound estimates for $\mathcal L(f^\alpha)$ in different scenarios. Equations \eqref{equ1 for omega}, \eqref{equ3 for omega} and \eqref{equ2 for omega} will be considered respectively. 
 
 	Choose an orthonormal basis of $TM$  $\{e_1,e_2,\ldots, e_n\}$ on a domain with $f\neq 0$ such that $e_1=\frac{\nabla \omega}{|\nabla \omega|}$. We have $\omega_1 = f^{1/2}$ and
 \begin{align}\label{equ 3.1}
 	\omega_{11} = \frac{1}{2}f^{-1/2}f_1 = \frac{1}{2}f^{-1}\langle \nabla \omega,\nabla f\rangle .
 \end{align}
 
 Here $\omega_1$ represents the derivative of function $\omega$ along $e_{1}$ and $\omega_{11}$ is also similarly defined. Rewrite $p$-Laplace operator under this set of frames. From \cite{KN,WZ}, it has such an expression
 \begin{align*}
 	\Delta_p \omega
 	=&f^{\frac{p}{2}-1}\left((p-1)\omega_{11}+\sum_{i=2}^n\omega_{ii}\right).
 \end{align*}
 Substituting the above equality into equation \eqref{equ1 for omega}, we get 
 	\begin{equation}\label{equ 3.2}
 	(p-1)\omega_{11}+\sum_{i=2}^n\omega_{ii}=cf+de^{k\omega}f^{1-\frac{p}{2}}.
 \end{equation}
 Note that the following inequalities hold
 \begin{gather}
 	|\nabla f|^2/f=4\sum_{i=1}^n u_{1i}^2 \ge 4 \omega_{11}^{2}, \label{equ 3.3}\\
 	|Hess\,\omega|^2 
 	\geq \omega_{11}^2 +
 	\sum_{i=2}^{n}\omega_{ii}^2
 	\geq \omega_{11}^{2} +\frac{1}{n-1}\left(\sum_{i=2}^{n}\omega_{ii}\right)^2,\label{equ 3.4}
 \end{gather}
 where  formula \eqref{equ 3.4} is  gained from Cauchy-Schwarz's inequality.\\
 We begin with equation \eqref{equ1 for omega}, whose structure implies that
 \begin{align}\label{equ 3.5}
 	\langle\nabla\Delta_p \omega,\nabla \omega\rangle =  cpf^{\frac{p}{2} }\omega_{11}+kde^{k\omega} f .
 \end{align}
 Substituting \eqref{equ 3.1}, \eqref{equ 3.3}, \eqref{equ 3.4} and \eqref{equ 3.5} into  equality \eqref{lemma 2.2}, we derive

 \begin{align}\label{equ 3.6}
 	\begin{split}
 		\quad\frac{f^{2-\alpha-\frac{p}{2}}}{2\alpha} \mathcal{L} \left(f^{\alpha}\right)
 		\geq
 		&
 		(2\alpha-1)(p-1)\omega_{11}^{2}
 		+\frac{1}{n-1}\left(\sum_{i=2}\omega_{ii}\right)^2+ {\rm{Ric}}(\nabla \omega, \nabla \omega)
 		\\
 		&
 		+cpf\omega_{11}+kde^{k\omega}f^{2-\frac{p}{2}}.
 	\end{split}				
 \end{align}
 Meanwhile, from \eqref{equ 3.6} we have
 \begin{align}\label{equ 3.7}
 	\begin{split}
 			\frac{1}{n-1}\left(\sum_{i=2}^{n} \omega_{ii}\right)^2
 		=&   \frac{1}{n-1}\left(cf+de^{kw}f^{1-\frac{p}{2}}-(p-1)\omega_{11}\right)^2\\
 		=&\frac{c^{2}f^{2}}{n-1}+\frac{d^{2}e^{2k\omega}f^{2-p}}{n-1}+\frac{(p-1)^{2}\omega_{11}^{2}}{n-1}+\frac{2cde^{k\omega}f^{2-\frac{p}{2}}}{n-1}\\		
 		&-\frac{2c(p-1)f\omega_{11}}{n-1}-\frac{2d(p-1)e^{k\omega}f^{1-\frac{p}{2}}\omega_{11}}{n-1}.
 	\end{split}
 	%&f^2+b^2e^{2cu}f^{2-p}+(p-1)^2u_{11}^2+2be^{cu}f^{2-p/2}-2(p-1)fu_{11}-2b(p-1)e^{cu}f^{1-p/2}u_{11}.
 \end{align}

 Substituting \eqref{equ 3.7} into \eqref{equ 3.6} and using the condition ${\rm{Ric}}(\nabla \omega, \nabla \omega)\ge -(n-1)Kf$, we arrive at
 \begin{align}\label{equ 3.8}
 	\begin{split}
 		\frac{f^{2-\alpha-\frac{p}{2}}}{2\alpha} \mathcal{L} \left(f^{\alpha}\right)
 		\geq\;
 		&
 		\frac{c^{2}}{n-1}\,f^{2}
 		-(n-1)Kf+\frac{d^{2}}{n-1}\,e^{2k\omega}f^{2-p}+\left(cp-\frac{2c(p-1)}{n-1}\right)f\omega_{11}
 		\\
 		&+\left((p-1)(2\alpha-1)+\frac{(p-1)^{2}}{n-1}\right)\omega_{11}^2-\frac{2d(p-1)}{n-1}e^{k\omega}f^{1-\frac{p}{2}}\omega_{11}	
 		\\
 		&
 		+d(k+\frac{2c}{n-1})e^{k\omega}f^{2-\frac{p}{2}}.
 	\end{split}			
 \end{align}
% one label for multiline formulae.
% \begin{subequations}
% 	\begin{align}\label{equ 3.11}					
% 		\qquad	\quad		\frac{f^{2-\alpha-\frac{p}{2}}}{2\alpha} \mathcal{L} (f^{\alpha})\geq &\left(p-\frac{2(p-1)}{n-1}\right)f u_{11} - (n-1)K f+\frac{f^{2}}{n-1} \notag\\
% 		&+\left((p-1)(2\alpha-1)+\frac{(p-1)^{2}}{n-1}\right)u_{11}^2+\left(q-\frac{2(p-1)}{n-1 }\right)be^{cu}f^{1-\frac{p}{2}+\frac{q}{2}}u_{11} \tag{3.11} \\
% 		&+\frac{1}{n-1}b^{2}e^{2cu}f^{2-p+q}+b\left(\frac{2}{n-1}+c\right)e^{cu}f^{2-\frac{p}{2}+\frac{q}{2}}.	\notag
% 	\end{align}
% \end{subequations}
 Now we handle the second line in \eqref{equ 3.8}. Using the inequality
 \[a^{2}-2ab\ge -b^{2},\]
  we arrive at
 \begin{align}
 	\label{equ 3.9}
 	\begin{split}
 		&\left((p-1)(2\alpha-1)+\frac{(p-1)^{2}}{n-1}\right)\omega_{11}^2-\frac{2d(p-1)}{n-1}e^{k\omega}f^{1-\frac{p}{2}}\omega_{11}	\\
 		\qquad&	\geq  
 		-\frac{d^{2}(p-1)}{(2\alpha-1)(n-1)^{2}+(p-1)(n-1)}e^{2k\omega}f^{2-p}.
 	\end{split}
 \end{align}
 Denote 
 \begin{equation}\label{parameter def1}	
 			\mu_{n,p,\alpha}\triangleq\frac{1}{n-1}-\frac{p-1}{(2\alpha-1)(n-1)^{2}+(p-1)(n-1)}.\\
 \end{equation}
 It is obvious that
 \[\mu_{n,p,\alpha}=\frac{(2\alpha-1)}{(2\alpha-1)(n-1)+(p-1)}\to \frac{1}{n-1},\quad \text{as}\;\,\alpha\to \infty.\]
  Substitute \eqref{equ 3.9} into \eqref{equ 3.8}, it yields
 \begin{align}\label{equ 3.11}
	\begin{split}
		\frac{f^{2-\alpha-\frac{p}{2}}}{2\alpha} \mathcal{L} \left(f^{\alpha}\right)
		\geq\;
		&
		\frac{c^{2}}{n-1}\,f^{2}+\mu_{n,p,\alpha}d^{2}e^{2k\omega}f^{2-p}
		-(n-1)Kf
		\\
		&
	+\left(cp-\frac{2c(p-1)}{n-1}\right)f\omega_{11}	+d(k+\frac{2c}{n-1})e^{k\omega}f^{2-\frac{p}{2}}.
	\end{split}			
\end{align}
Notice that the following formula holds, if we set $a_{1}=\left|cp-\frac{2c(p-1)}{n-1}\right|$,
\begin{equation}\label{equ 3.12}
	\left(cp-\frac{2c(p-1)}{n-1}\right)f\omega_{11}\ge -\frac{a_{1}}{2}f^{\frac{1}{2}}|\nabla f|.
\end{equation}
A combination of \eqref{equ 3.11} and \eqref{equ 3.12} reaches
 \begin{align}\label{equ 3.13}
	\begin{split}
		\frac{f^{2-\alpha-\frac{p}{2}}}{2\alpha} \mathcal{L} \left(f^{\alpha}\right)
		\geq\;
		&
		\frac{c^{2}}{n-1}\,f^{2}+\mu_{n,p,\alpha}d^{2}e^{2k\omega}f^{2-p}
		-(n-1)Kf
		\\
		&
		-\frac{a_{1}}{2}f^{\frac{1}{2}}|\nabla f|	+d(k+\frac{2c}{n-1})e^{k\omega}f^{2-\frac{p}{2}}.
	\end{split}			
\end{align}
 \textbf{Case I:} If the last term in \eqref{equ 3.13} is nonnegative, i.e.
 \begin{equation}\label{condition I}
 	d(k+\frac{2c}{n-1})e^{k\omega}f^{2-\frac{p}{2}}\ge 0,
 \end{equation}
 by omitting some nonnegative terms in \eqref{equ 3.13} we arrive at
 \begin{equation}\label{esitmate1}
 		\frac{f^{2-\alpha-\frac{p}{2}}}{2\alpha} \mathcal{L} \left(f^{\alpha}\right)	\geq	
 	\frac{c^{2}}{n-1}\,f^{2}
 	-(n-1)Kf
 	-\frac{a_{1}}{2}f^{\frac{1}{2}}|\nabla f|.	
 \end{equation}
 \textbf{Case II:} Via the inequality $a^{2}+2ab\ge -b^{2}$, we have
 \begin{equation}\label{equ 3.16}
 	\mu_{n,p,\alpha}d^{2}e^{2k\omega}f^{2-p}+	d(k+\frac{2c}{n-1})e^{k\omega}f^{2-\frac{p}{2}}\ge -\frac{1}{4\mu}\left(k+\frac{2c}{n-1}\right)^{2}f^{2}.
 \end{equation}
  By coupling \eqref{equ 3.16} and \eqref{equ 3.11}, it provides 
  \begin{equation}\label{estimate2}
  		\frac{f^{2-\alpha-\frac{p}{2}}}{2\alpha} \mathcal{L} \left(f^{\alpha}\right)	\geq	
  		  \sigma_{1} f^{2}
  		-(n-1)Kf
  		-\frac{a_{1}}{2}f^{\frac{1}{2}}|\nabla f|,
  \end{equation}
  where
  \begin{equation}\label{condition II}
  	\sigma_{1}=\sigma_{1}(n,p,q,\gamma,\alpha)\triangleq \frac{c^{2}}{n-1}-\frac{1}{4\mu}\left(k+\frac{2c}{n-1}\right)^{2}.
  \end{equation}
  
  The treatment of  equation \eqref{equ3 for omega} is only a minor adaptation of the process described above. Similar to \eqref{equ 3.13}, we have
  \begin{align}\label{equ 3.19}
  	\begin{split}
  		\frac{f^{2-\alpha-\frac{p}{2}}}{2\alpha} \mathcal{L} \left(f^{\alpha}\right)
  		\geq\;
  		&
  		\frac{c^{2}}{n-1}\,f^{2}+\mu_{n,p,\alpha}l^{2}e^{2k\omega}f^{2-p}
  		-(n-1)Kf
  		\\
  		&
  		-\frac{a_{1}}{2}f^{\frac{1}{2}}|\nabla f|	-l(k+\frac{2c}{n-1})e^{k\omega}f^{2-\frac{p}{2}}.
  	\end{split}			
  \end{align}
  \textbf{Case III:} If the last term in \eqref{equ 3.19} is non-positive, i.e.
  
   \begin{equation}\label{condition III}
  	l(k+\frac{2c}{n-1})e^{k\omega}f^{2-\frac{p}{2}}\le 0,
  \end{equation}
  by omitting some nonnegative terms in \eqref{equ 3.19} we also arrive at 
  \begin{equation}\label{esitmate3}
  	\frac{f^{2-\alpha-\frac{p}{2}}}{2\alpha} \mathcal{L} \left(f^{\alpha}\right)	\geq	
  	\frac{c^{2}}{n-1}\,f^{2}
  	-(n-1)Kf
  	-\frac{a_{1}}{2}f^{\frac{1}{2}}|\nabla f|.	
  \end{equation}
  \textbf{Case IV:} We use the inequality $a^{2}-2ab\ge -b^{2}$ to obtain 
   \begin{equation}\label{equ 3.22}
  	\mu_{n,p,\alpha}l^{2}e^{2k\omega}f^{2-p}-l	(k+\frac{2c}{n-1})e^{k\omega}f^{2-\frac{p}{2}}\ge -\frac{1}{4\mu}\left(k+\frac{2c}{n-1}\right)^{2}f^{2}.
  \end{equation}
  The same discussion as in \textbf{Case II} leads to 

  \begin{equation}\label{estimate4}
  	\frac{f^{2-\alpha-\frac{p}{2}}}{2\alpha} \mathcal{L} \left(f^{\alpha}\right)	\geq	
  	\sigma_{1} f^{2}
  	-(n-1)Kf
  	-\frac{a_{1}}{2}f^{\frac{1}{2}}|\nabla f|.
  \end{equation}

  The behavior up to equation \eqref{equ2 for omega} is similar but with minor changes. Instead of \eqref{equ 3.2}, \eqref{equ 3.5} and \eqref{equ 3.7}, we replace each by
 \begin{equation}
	(p-1)\omega_{11}+\sum_{i=2}^n\omega_{ii}=-(p-1)f-ae^{(p-1)(q-1)\omega}f^{1-\frac{p}{2}},
 \end{equation}
 \begin{equation}
 	\langle\nabla\Delta_p \omega,\nabla \omega\rangle= -\frac{p(p-1)}{2}f^{\frac{p}{2}-1}\langle \nabla f,\nabla \omega \rangle-a(p-1)(q-1)e^{(p-1)(q-1)\omega}f^{1-\frac{p}{2}},
 \end{equation}
 and
 \begin{align}
 	\begin{split}
 		\frac{1}{n-1}\left(\sum_{i=2}^{n} \omega_{ii}\right)^2
 		=&   \frac{1}{n-1}\left((p-1)\omega_{11}+(p-1)f+a(p-1)(q-1)e^{(p-1)(q-1)\omega}f^{1-\frac{p}{2}}\right)^2\\
 		=&\frac{(p-1)^{2}f^{2}}{n-1}+\frac{a^{2}e^{2(p-1)(q-1)\omega}f^{2-p}}{n-1}+\frac{(p-1)^{2}\omega_{11}^{2}}{n-1}+\frac{2(p-1)^{2}f\omega_{11}}{n-1}\\		
 		&+\frac{2a(p-1)e^{(p-1)(q-1)\omega}f^{2-\frac{p}{2}}}{n-1}+\frac{2a(p-1)e^{(p-1)(q-1)\omega}f^{1-\frac{p}{2}}\omega_{11}}{n-1}.
 	\end{split}
 \end{align}
Proceeding in a similar manner, we easily get
\begin{align}\label{equ 3.27}
	\begin{split}
		\frac{f^{2-\alpha-\frac{p}{2}}}{2\alpha} \mathcal{L} \left(f^{\alpha}\right)
		\geq\;
		&
		\frac{(p-1)^{2}}{n-1}\,f^{2}
		-(n-1)Kf+\frac{a^{2}}{n-1}\,e^{2(p-1)(q-1)\omega}f^{2-p}-\frac{a_{1}}{2}f^{\frac{1}{2}}|\nabla f|
		\\
		&+\left((p-1)(2\alpha-1)+\frac{(p-1)^{2}}{n-1}\right)\omega_{11}^2+\frac{2a(p-1)}{n-1}e^{(p-1)(q-1)\omega}f^{1-\frac{p}{2}}\omega_{11}	
		\\
		&
		+\left(\frac{2a(p-1)}{n-1}-a(p-1)(q-1)\right)e^{(p-1)(q-1)\omega}f^{2-\frac{p}{2}}.
	\end{split}			
\end{align}
Substituting the inequality
\begin{align*}
	\begin{split}
	&\left((p-1)(2\alpha-1)+\frac{(p-1)^{2}}{n-1}\right)\omega_{11}^2+
	\frac{2a(p-1)}{n-1}e^{(p-1)(q-1)\omega}f^{1-\frac{p}{2}}\omega_{11}	\\
		\qquad&	\geq  
		-\frac{a^{2}(p-1)f^{2-p}}{(2\alpha-1)(n-1)^{2}+(p-1)(n-1)}e^{2(p-1)(q-1)\omega}
	\end{split}
\end{align*}
into \eqref{equ 3.27}, it provides
 \begin{align*}
	\begin{split}
		\frac{f^{2-\alpha-\frac{p}{2}}}{2\alpha} \mathcal{L} \left(f^{\alpha}\right)
		\geq\;
		&
		\frac{(p-1)^{2}}{n-1}\,f^{2}+a^{2}\mu_{n,p,\alpha}e^{2(p-1)(q-1)\omega}f^{2-p}
		-(n-1)Kf\\	
		&
		+\left(\frac{2a(p-1)}{n-1}-a(p-1)(q-1)\right)e^{(p-1)(q-1)\omega}f^{2-\frac{p}{2}}.\\
			&+\left(\frac{2(p-1)^{2}}{n-1}-p(p-1)\right)f\omega_{11}.
	\end{split}			
\end{align*} 
Analogously, we set  $a_{2}=\left|\frac{2(p-1)^{2}}{n-1}-p(p-1)\right|$  and also get
\begin{equation*}
	\left(\frac{2(p-1)^{2}}{n-1}-p(p-1)\right)f\omega_{11}\ge -\frac{a_{2}}{2}f^{\frac{1}{2}}|\nabla f|.
\end{equation*}
Similar to \eqref{equ 3.13}, we derive
 \begin{align}\label{equ 3.28}
	\begin{split}
		\frac{f^{2-\alpha-\frac{p}{2}}}{2\alpha} \mathcal{L} \left(f^{\alpha}\right)
		\geq\;
		&
		\frac{(p-1)^{2}}{n-1}\,f^{2}+a^{2}\mu_{n,p,\alpha}e^{2(p-1)(q-1)\omega}f^{2-p}
		-(n-1)Kf-\frac{a_{2}}{2}f^{\frac{1}{2}}|\nabla f|\\	
		&
		+\left(\frac{2a(p-1)}{n-1}-a(p-1)(q-1)\right)e^{(p-1)(q-1)\omega}f^{2-\frac{p}{2}}.
	\end{split}			
\end{align} 
\textbf{Case V:} If the last term in \eqref{equ 3.28} is nonnegative, i.e. 
\begin{equation}\label{condition V}
	\frac{2a(p-1)}{n-1}-a(p-1)(q-1)\ge 0,
\end{equation} 
then by discarding some nonnegative terms in \eqref{equ 3.28}, we arrive at
  \begin{equation}\label{esitmate5}
 	\frac{f^{2-\alpha-\frac{p}{2}}}{2\alpha} \mathcal{L} \left(f^{\alpha}\right)	\geq	
 	\frac{(p-1)^{2}}{n-1}\,f^{2}
 	-(n-1)Kf
 	-\frac{a_{2}}{2}f^{\frac{1}{2}}|\nabla f|.	
 \end{equation}
 \textbf{Case VI:} Following the same procedure in \textbf{Case II}, we find

 \begin{align*}
 	\begin{split}
 		 a^{2}\mu_{n,p,\alpha}e^{2(p-1)(q-1)\omega}f^{2-p}&+\left(\frac{2a(p-1)}{n-1}-a(p-1)(q-1)\right)e^{(p-1)(q-1)\omega}f^{2-\frac{p}{2}}	\\
 		\qquad&	\geq  
 	-\frac{1}{4\mu}\left(\frac{2(p-1)}{n-1}-(p-1)(q-1)\right)^{2}f^{2}.
 	\end{split}
 \end{align*}
 Combining the above formula and \eqref{equ 3.28}, we obtain 
 \begin{equation}\label{estimate6}
	\frac{f^{2-\alpha-\frac{p}{2}}}{2\alpha} \mathcal{L} \left(f^{\alpha}\right)	\geq	
	\sigma_{2} f^{2}
	-(n-1)Kf
	-\frac{a_{2}}{2}f^{\frac{1}{2}}|\nabla f|,
\end{equation}
where
\begin{equation}\label{condition VI}
	\sigma_{2}=\sigma_{2}(n,p,q,\gamma,\alpha)\triangleq (p-1)^{2}\left[\frac{1}{n-1}-\frac{1}{4\mu}\left(\frac{2}{n-1}-(q-1) \right)^{2}\right].
\end{equation}

To maintain consistency with $\sigma_{1}$, we retain the dependence of $\sigma_{2}$ on $\gamma$ despite its prior determination.
 \subsection{A further discussion about coefficients}\label{section3.2} We deeply discuss the conditions mentioned in \secref{section3.1} and derive the relationship that  parameters $a,n,p,q$ and $\gamma$ satisfy.
 First of all, we consider  \textbf{Case I} and \textbf{Case III}.
By combining condition \eqref{condition I} with the explicit expressions for 
$c,d,k$ and $l$, we derive
  \begin{equation}
  	   a(p-1)^{p-1} \left(\frac{b}{\gamma}\right)^{\frac{q-1}{b}}\left[\frac{n+1}{n-1}+\frac{2-(n-1)(q-1)}{b(n-1)(p-1)}\right]\ge 0.
  \end{equation}
  Since $b/\gamma$ keeps positive for $\gamma>\frac{1}{p-1}$, this condition reduces to
  \begin{equation}\label{cd1}
 	a\left[\frac{n+1}{n-1}+\frac{2-(n-1)(q-1)}{b(n-1)(p-1)}\right]\ge 0.
  \end{equation}
  By a straightforward calculation, we know
  $$ \frac{n+1}{n-1}+\frac{2-(n-1)(q-1)}{b(n-1)(p-1)}=\frac{\gamma(n+1)(p-1)-(n-1)q}{(n-1)\left(\gamma(p-1)-1\right)}.$$
  Utilizing the above formula and solving inequality \eqref{cd1}, we derive
  \begin{equation}\label{parameter restriction1}
  	 \left\{\begin{matrix}
  	 	\begin{aligned}
  	 		a&\ge 0,\\
  	 		\gamma&>\frac{1}{p-1},\\
  	 		p&\ge 1+\frac{(n-1)q}{(n+1)\gamma},
  	 	\end{aligned}	
  	\end{matrix}\right.
  	\qquad  or \qquad 
  	 \left\{\begin{matrix}
  		\begin{aligned}
  			a&\le 0,\\
  			\gamma&>\frac{1}{p-1},\\
  			p&\le 1+\frac{(n-1)q}{(n+1)\gamma}.
  		\end{aligned}	
  	\end{matrix}\right.
  \end{equation}
  
  Noting that $-b/\gamma$ is still positive for $0<\gamma<\frac{1}{p-1}$, we apply the same argument to \eqref{condition III} and get
\begin{equation}\label{cd2}
	a\left[\frac{n+1}{n-1}+\frac{2-(n-1)(q-1)}{b(n-1)(p-1)}\right]\le 0.
\end{equation}
 By solving  inequality \eqref{cd2}, we derive
 \begin{equation}\label{parameter restriction2}
  \left\{\begin{matrix}
 	\begin{aligned}
 		a&\ge 0,\\
 		0&<\gamma<\frac{1}{p-1},\\
 		p&\ge 1+\frac{(n-1)q}{(n+1)\gamma},
 	\end{aligned}	
 \end{matrix}\right.
 \qquad  or \qquad 
 \left\{\begin{matrix}
 	\begin{aligned}
 		a&\le 0,\\
 		0&<\gamma<\frac{1}{p-1},\\
 		p&\le 1+\frac{(n-1)q}{(n+1)\gamma}.
 	\end{aligned}	
 \end{matrix}\right.
 \end{equation}
Towards \textbf{Case V}, by  solving \eqref{condition V} directly, it suggests 
 \begin{equation}\label{parameter restricition3}
 		\left\{\begin{matrix}
 		\begin{aligned}
 			a&\ge 0,\\
 			q&\le\frac{n+1}{n-1},
 		\end{aligned}	
 	\end{matrix}\right.
 	\quad  or \quad
 	\left\{\begin{matrix}
 		\begin{aligned}
 			a&\le 0,\\
 		q&\ge\frac{n+1}{n-1}.
 		\end{aligned}	
 	\end{matrix}\right.
 \end{equation}
 
 	We now analyze the conditions under which  the coefficients $\sigma_{1}$ and $\sigma_{2}$ are positive in \textbf{Case II}, \textbf{Case IV} and \textbf{Case VI}, respectively. Recall that $\mu_{n,p,\alpha}$ tends to $1/(n-1)$ as $\alpha$ tends to infinity. Henceforth, if the following condition holds
 	\begin{equation}\label{inequ for sigma1}
 		\frac{c^{2}}{n-1}-\frac{n-1}{4}\left(k+\frac{2c}{n-1}\right)^{2}>0,
 	\end{equation}
 	then there exists some $\alpha_{1}\ge \frac{3}{2}$ depending on $n,p,q,\gamma$ such that $\sigma_{1}>0$  as long as $\alpha\ge \alpha_{1}$. Solving inequality \eqref{inequ for sigma1}, we have
 	 \begin{equation}\label{equ 3.40}
 		\left\{\begin{matrix}
 			\begin{aligned}
 				k&> 0,\\
 				k&+\frac{4c}{n-1}<0,
 			\end{aligned}	
 		\end{matrix}\right.
 		\quad  or \quad
 		\left\{\begin{matrix}
 			\begin{aligned}
 				k&< 0,\\
 				k&+\frac{4c}{n-1}>0.
 			\end{aligned}	
 		\end{matrix}\right.
 	\end{equation}
 	Substitute the expression of $c$ and $k$ into \eqref{equ 3.40}, the first group of inequalities become
 	\begin{gather}\label{1st}
 		\begin{cases}
 			1-\frac{q-1}{\gamma(p-1)-1}>0,\\
 			1-\frac{q-1}{\gamma(p-1)-1}+\frac{4}{n-1}\left(1+\frac{1}{\gamma(p-1)-1}\right)<0.
 		\end{cases}
 	\end{gather}
	Since \eqref{1st} admits no solution when $\gamma>\frac{1}{p-1}$, we get
	$$  \gamma(p-1)<q<\frac{n+3}{n-1}\gamma(p-1),\quad \;0<\gamma<\frac{1}{p-1}.$$
 	Meanwhile, the second group of inequalities in \eqref{equ 3.40} imply
 		\begin{gather}\label{2nd}
 		\begin{cases}
 			1-\frac{q-1}{\gamma(p-1)-1}<0,\\
 			1-\frac{q-1}{\gamma(p-1)-1}+\frac{4}{n-1}\left(1+\frac{1}{\gamma(p-1)-1}\right)>0.
 		\end{cases}
 	\end{gather}
 	It follows from \eqref{2nd} analogously that 
 	\begin{equation*}
 		\gamma(p-1)<q<\frac{n+3}{n-1}\gamma(p-1),\quad \;\gamma>\frac{1}{p-1}. 
 	\end{equation*}
 	Combining these mentioned above,  \eqref{inequ for sigma1} implies 
 	\begin{equation}\label{need1}
 		 \gamma(p-1)<q<\frac{n+3}{n-1}\gamma(p-1),\quad \;\gamma\neq\frac{1}{p-1}. 
 	\end{equation}
 	Similarly, $\sigma_{2}$ stays positive provided the following condition holds
	\begin{equation}\label{inequ for sigma2}
	\frac{1}{n-1}-\frac{n-1}{4}\left(\frac{2}{n-1}-(q-1) \right)^{2}>0.
	\end{equation}
 	It solves 
 	\begin{equation}\label{need2}
 		1<q<\frac{n+3}{n-1},
 	\end{equation}
 	where in this case $\gamma=1/(p-1)$. Consequently, we also confirm that there exists some $\alpha_{2}\ge \frac{3}{2}$ relying on $n,p,q,\gamma$ such that $\sigma_{2}>0$ if $\alpha\ge \alpha_{2}$.
 	
 \subsection{Some technical lemmas}
 From the discussion in \secref{section3.1} and \secref{section3.2}, we can obtain the following lemmas.
 \begin{lem}\label{lem3.1}
 	Let $(M^{n},g)$ be a complete Riemannian manifold satisfying $Ric(M)\ge -(n-1)K g$ for some constant $K\ge 0$. Denote $b=\gamma-\tfrac{1}{p-1}>0$ and $c=1+\frac{1}{b(p-1)}$. If parameters $a,p,q,\gamma$ and $n$ satisfy
 	\begin{equation*}
 		a\left[\frac{n+1}{n-1}+\frac{2-(n-1)(q-1)}{b(n-1)(p-1)}\right]\ge 0,
 	\end{equation*}
 	 then the following estimate holds
 	\[\mathcal{L} (f^{\alpha})\geq\frac{2\alpha c^{2}}{n-1}\,f^{\alpha+\frac{p}{2}}-2\alpha(n-1)Kf^{\alpha+\frac{p}{2}-1}-a_{1}\alpha f^{\alpha+\frac{p}{2}-\frac{3}{2}}|\nabla f|.\]
 \end{lem}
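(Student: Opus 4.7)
The plan is to observe that this lemma is exactly the repackaging of \textbf{Case I} from the analysis in Section~\ref{section3.1} into a convenient pointwise form. Since the hypothesis assumes $b=\gamma-\tfrac{1}{p-1}>0$, I would first invoke the change of variables from Section~\ref{section2.2}: set $v=\tfrac{\gamma}{b}u^{b}$ and then $\omega=-(p-1)\log v$, so that $\omega$ satisfies equation \eqref{equ1 for omega}. All subsequent computations then take place on the open set $\{f>0\}$, where $f=|\nabla\omega|^{2}$.

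Next I would verify that the parameter condition stated in the lemma is equivalent to condition \eqref{condition I}, namely $d\bigl(k+\tfrac{2c}{n-1}\bigr)e^{k\omega}f^{2-p/2}\ge 0$. A direct algebraic expansion using the definitions of $c$ and $k$ yields
\begin{equation*}
k+\frac{2c}{n-1}=\frac{n+1}{n-1}+\frac{2-(n-1)(q-1)}{b(n-1)(p-1)}.
\end{equation*}
Moreover, since $b,\gamma>0$, the quantity $(b/\gamma)^{(q-1)/b}$ appearing in $d=a(p-1)^{p-1}(b/\gamma)^{(q-1)/b}$ is strictly positive, so $\operatorname{sgn}(d)=\operatorname{sgn}(a)$. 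Combined with the positivity of $e^{k\omega}f^{2-p/2}$ on $\{f>0\}$, the hypothesis is precisely \eqref{condition I}.

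With \textbf{Case I} in force, estimate \eqref{esitmate1} gives
\begin{equation*}
\frac{f^{2-\alpha-\frac{p}{2}}}{2\alpha}\,\mathcal{L}(f^{\alpha})\ge \frac{c^{2}}{n-1}\,f^{2}-(n-1)K f-\frac{a_{1}}{2}\,f^{\frac{1}{2}}|\nabla f|.
\end{equation*}
Multiplying both sides by $2\alpha f^{\alpha+\frac{p}{2}-2}$, valid pointwise on $\{f>0\}$, produces the desired inequality. On the complementary set $\{f=0\}$ the inequality holds in the trivial (distributional) sense, consistent with the degeneracy convention noted after Definition~\ref{def of solution}.

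I do not anticipate a genuine obstacle: all the analytic work sits in Lemma~\ref{lemma 2.2} and the Case~I derivation leading to \eqref{esitmate1}. The only point requiring care is the sign bookkeeping that identifies the algebraic hypothesis with the geometric condition \eqref{condition I}, and in particular the use of $b/\gamma>0$ to transfer signs from $d$ to $a$. The analogous lemmas for $b<0$ and $b=0$ will require invoking \eqref{esitmate3} and \eqref{esitmate5} respectively, with the corresponding sign of $b/\gamma$ (or the direct form of \eqref{equ2 for omega}) dictating how the parameter inequality is reformulated.
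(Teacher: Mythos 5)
Your proposal is correct and follows essentially the same route as the paper: the paper's own proof cites \eqref{condition I}, \eqref{esitmate1} and \eqref{cd1}, and your verification that $k+\tfrac{2c}{n-1}$ equals the bracketed expression, together with the observation that $(b/\gamma)^{(q-1)/b}>0$ transfers the sign from $d$ to $a$, is exactly the reduction carried out in Section~\ref{section3.2}. The final multiplication of \eqref{esitmate1} by $2\alpha f^{\alpha+\frac{p}{2}-2}$ is the same repackaging the paper performs implicitly.
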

 \begin{lem}\label{lem3.2}
 			Let $(M^{n},g)$ be a complete Riemannian manifold satisfying $Ric(M)\ge -(n-1)K g$ for some constant $K\ge 0$.  Denote $b=\gamma-\tfrac{1}{p-1}\neq 0$ and $c=1+\frac{1}{b(p-1)}$. If the following condition
 		$$ \gamma(p-1)<q<\frac{n+3}{n-1}\gamma(p-1)$$
 	   holds for any $a\in \BR$, then there exists a constant $\alpha_{1}\ge \frac{3}{2}$ depending on $n,p,q,\gamma$ such that for  $\alpha\ge \alpha_{1}$,
 		\[ 	 \mathcal{L} \left(f^{\alpha}\right)	\geq	
 		2\alpha \sigma_{1} f^{\alpha+\frac{p}{2}}
 		-2\alpha(n-1)Kf^{\alpha+\frac{p}{2}-1}-a_{1}\alpha f^{\alpha+\frac{p}{2}-\frac{3}{2}}|\nabla f|,\]
 		where $\sigma_{1}$ is defined in \eqref{condition II}.
 \end{lem}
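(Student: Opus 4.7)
The plan is to read off the bound directly from the estimates already derived in Section 3.1, and then to isolate the single analytic point that still needs verification: the positivity of $\sigma_1$ for large $\alpha$.

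First I would split according to the sign of $b$. Since $v$ satisfies \eqref{equation1 for v} when $b>0$ and \eqref{equation3 for v} when $b<0$, after the logarithmic substitution $\omega=-(p-1)\log v$ the function $\omega$ satisfies \eqref{equ1 for omega} or \eqref{equ3 for omega} respectively. In neither case is a sign restriction on $a$ needed. For the $b>0$ branch I would apply \textbf{Case II}, which yields \eqref{estimate2}; for the $b<0$ branch I would apply \textbf{Case IV}, which yields the identical inequality \eqref{estimate4}. In either case,
\begin{equation*}
\frac{f^{2-\alpha-\frac{p}{2}}}{2\alpha}\,\mathcal{L}(f^{\alpha})\;\geq\;\sigma_{1}f^{2}-(n-1)K f-\frac{a_{1}}{2}\,f^{\frac{1}{2}}|\nabla f|.
\end{equation*}
Multiplying through by $2\alpha f^{\alpha+\frac{p}{2}-2}$ (which is legal wherever $f>0$, and the inequality is understood pointwise on this set as stipulated after \eqref{lemma 2.2}) produces exactly the stated bound.

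The only remaining task is to verify that there is an $\alpha_{1}\geq\tfrac{3}{2}$ such that $\sigma_{1}(n,p,q,\gamma,\alpha)>0$ whenever $\alpha\geq\alpha_{1}$. Recall from \eqref{parameter def1} that
\begin{equation*}
\mu_{n,p,\alpha}=\frac{2\alpha-1}{(2\alpha-1)(n-1)+(p-1)}\longrightarrow\frac{1}{n-1}\qquad\text{as }\alpha\to\infty,
\end{equation*}
so by \eqref{condition II} one has $\sigma_{1}(n,p,q,\gamma,\alpha)\to \frac{c^{2}}{n-1}-\frac{n-1}{4}\bigl(k+\frac{2c}{n-1}\bigr)^{2}$. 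The analysis carried out between \eqref{inequ for sigma1} and \eqref{need1} shows that this limiting quantity is strictly positive precisely under the hypothesis $\gamma(p-1)<q<\tfrac{n+3}{n-1}\gamma(p-1)$ with $\gamma\neq\tfrac{1}{p-1}$. Since $\mu_{n,p,\alpha}$ is continuous and monotone in $\alpha$, the coefficient $\sigma_{1}$ is likewise continuous in $\alpha$, so I would simply pick $\alpha_{1}\geq\tfrac{3}{2}$ large enough that $\sigma_{1}(n,p,q,\gamma,\alpha)$ stays above half of its limit for all $\alpha\geq\alpha_{1}$.

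The only genuinely delicate step is the last one, and its difficulty has already been absorbed into Section 3.2: matching the quadratic inequality \eqref{inequ for sigma1} with the interval $\gamma(p-1)<q<\tfrac{n+3}{n-1}\gamma(p-1)$ involves a sign-of-$b$ case split (compare \eqref{1st}--\eqref{2nd}), but in either case one recovers the same $q$-interval. Once that equivalence is invoked, the lemma is immediate from \textbf{Case II}/\textbf{Case IV}. No new computation is needed beyond choosing $\alpha_1$ and rescaling the inequality.
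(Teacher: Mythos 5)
Your proposal is correct and follows essentially the same route as the paper, whose proof of \lemref{lem3.2} is exactly the combination of \eqref{estimate2}/\eqref{estimate4} (Cases II and IV for the two signs of $b$) with the positivity analysis of $\sigma_{1}$ in \eqref{inequ for sigma1}--\eqref{need1}. Your extra remarks on rescaling by $2\alpha f^{\alpha+\frac{p}{2}-2}$ and on the monotone convergence $\mu_{n,p,\alpha}\to\frac{1}{n-1}$ just make explicit what the paper leaves implicit.
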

 \begin{lem}\label{lem3.3}
 		Let $(M^{n},g)$ be a complete Riemannian manifold satisfying $Ric(M)\ge -(n-1)K g$ for some constant $K\ge 0$.   Denote $\gamma=\frac{1}{p-1}$, i.e. $b=0$. If parameters $a,n,p$ and $q$ satisfy
 		$$	\frac{2a(p-1)}{n-1}-a(p-1)(q-1)\ge 0,$$
 	 then the following estimate holds
 	\[\mathcal{L} (f^{\alpha})\geq\frac{2\alpha (p-1)^{2}}{n-1}\,f^{\alpha+\frac{p}{2}}-2\alpha(n-1)Kf^{\alpha+\frac{p}{2}-1}-a_{2}\alpha f^{\alpha+\frac{p}{2}-\frac{3}{2}}|\nabla f|.\]
 \end{lem}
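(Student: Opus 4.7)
The plan is to follow the scheme already developed for \textbf{Case V} in \secref{section3.1}, specialized to the regime $b=0$ (equivalently $\gamma=\tfrac{1}{p-1}$), where the pressure transformation is simply $\omega=v$ and the PDE is \eqref{equ2 for omega}. First I would apply the identity \eqref{lemma 2.2} to expand $\mathcal{L}(f^\alpha)$ with $f=|\nabla\omega|^2$. Then, in a local orthonormal frame with $e_1=\nabla\omega/|\nabla\omega|$, I would use \eqref{equ 3.1}, the gradient bound $|\nabla f|^2/f\ge 4\omega_{11}^2$, and the Cauchy--Schwarz inequality \eqref{equ 3.4} for $|\mathrm{Hess}\,\omega|^2$. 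The PDE gives a direct identity for $(p-1)\omega_{11}+\sum_{i\ge 2}\omega_{ii}$ and, after differentiation along $\nabla\omega$, an expression for $\langle\nabla\Delta_p\omega,\nabla\omega\rangle$ featuring the exponential nonlinearity $e^{(p-1)(q-1)\omega}$.

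Substituting these into \eqref{lemma 2.2} and absorbing the curvature contribution via $\mathrm{Ric}(\nabla\omega,\nabla\omega)\ge -(n-1)Kf$ produces an inequality of the form \eqref{equ 3.27}. Three groups of terms then require care: (i) the cross term $\bigl(\tfrac{2(p-1)^2}{n-1}-p(p-1)\bigr)f\omega_{11}$, which is controlled by $-\tfrac{a_2}{2}f^{1/2}|\nabla f|$ through \eqref{equ 3.1} with $a_2=\bigl|\tfrac{2(p-1)^2}{n-1}-p(p-1)\bigr|$; (ii) a cross term coupling $\omega_{11}^2$ with $e^{(p-1)(q-1)\omega}f^{1-p/2}\omega_{11}$, which I would absorb by completing the square against the positive $\omega_{11}^2$ coefficient, leaving the nonnegative remainder $\mu_{n,p,\alpha}a^2 e^{2(p-1)(q-1)\omega}f^{2-p}$ that may be discarded; (iii) the residual term $\bigl(\tfrac{2a(p-1)}{n-1}-a(p-1)(q-1)\bigr)e^{(p-1)(q-1)\omega}f^{2-p/2}$, which is nonnegative exactly under the hypothesis \eqref{condition V} of the lemma and can therefore also be dropped. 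What remains is the estimate \eqref{esitmate5}.

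Finally, multiplying \eqref{esitmate5} through by $2\alpha f^{\alpha+\tfrac{p}{2}-2}$ yields the claimed inequality. The main subtlety is bookkeeping rather than a genuine obstacle: one must track the coefficient $\tfrac{2a(p-1)}{n-1}-a(p-1)(q-1)$ consistently through the Cauchy--Schwarz and completing-the-square steps so that the hypothesis of the lemma is precisely the sign condition needed to discard the last nonnegative block without degrading the leading constant $\tfrac{(p-1)^2}{n-1}$ in front of $f^{\alpha+p/2}$. Since \textbf{Case V} in \secref{section3.1} already executes this scheme in full, the lemma is essentially a reformulation of the inequality derived there into the normalized form required for the later iteration argument.
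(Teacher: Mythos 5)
Your proposal is correct and follows essentially the same route as the paper: the paper proves Lemma \ref{lem3.3} simply by invoking the Case V computation, i.e.\ the chain leading from \eqref{lemma 2.2} through \eqref{equ 3.27} and \eqref{equ 3.28} to \eqref{esitmate5} under the sign condition \eqref{condition V}, and then rescaling by $2\alpha f^{\alpha+\frac{p}{2}-2}$. Your handling of the three term groups (the $a_2$ cross term, the completed square leaving the nonnegative $\mu_{n,p,\alpha}$ remainder, and the discarded last term) matches the paper's argument exactly.
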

 \begin{lem}\label{lem3.4}
 		Let $(M^{n},g)$ be a complete Riemannian manifold satisfying $Ric(M)\ge -(n-1)K g$ for some constant $K\ge 0$. Denote $\gamma=\frac{1}{p-1}$, i.e. $b=0$. If the following condition 
 			$$1<q<\frac{n+3}{n-1} $$
 	 holds for any $a\in \BR$, then there exists a constant $\alpha_{2}\ge \frac{3}{2}$ depending on $n,p,q,\gamma$ such that for  $\alpha\ge \alpha_{2}$,
 	\[ 	 \mathcal{L} \left(f^{\alpha}\right)	\geq	
 2\alpha \sigma_{2} f^{\alpha+\frac{p}{2}}
 -2\alpha(n-1)Kf^{\alpha+\frac{p}{2}-1}-a_{2}\alpha f^{\alpha+\frac{p}{2}-\frac{3}{2}}|\nabla f|,\]
 where $\sigma_{2}$ is given by \eqref{condition VI}.
 \end{lem}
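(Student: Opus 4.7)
The plan is to specialize the general lower bound for $\mathcal L(f^\alpha)$ that was already worked out for equation \eqref{equ2 for omega} in \secref{section3.1}, and then to verify that the condition $1<q<\tfrac{n+3}{n-1}$ suffices to make the quadratic-in-$f$ coefficient positive once $\alpha$ is taken sufficiently large. Since $\gamma=1/(p-1)$ means $b=0$, we work directly with $\omega=v$ satisfying \eqref{equ2 for omega}, so every estimate derived for this equation in \secref{section3.1} is at our disposal.

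First I would invoke the identity \eqref{lemma 2.2}, choose the adapted frame with $e_1=\nabla\omega/|\nabla\omega|$, insert the expressions for $\sum_i \omega_{ii}$ and $\langle\nabla\Delta_p\omega,\nabla\omega\rangle$ coming from \eqref{equ2 for omega}, use \eqref{equ 3.3}, the Cauchy--Schwarz bound \eqref{equ 3.4}, the Ricci hypothesis and the pointwise inequality controlling $f\omega_{11}$ by $\tfrac{a_2}{2}f^{1/2}|\nabla f|$. This reproduces exactly \eqref{equ 3.28}, giving
\[
\frac{f^{2-\alpha-p/2}}{2\alpha}\mathcal L(f^\alpha)\ \ge\ \frac{(p-1)^2}{n-1}f^{2}+a^2\mu_{n,p,\alpha}e^{2(p-1)(q-1)\omega}f^{2-p}-(n-1)Kf-\tfrac{a_2}{2}f^{1/2}|\nabla f|+\Lambda,
\]
where $\Lambda=\bigl(\tfrac{2a(p-1)}{n-1}-a(p-1)(q-1)\bigr)e^{(p-1)(q-1)\omega}f^{2-p/2}$.

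The next step is the \textbf{Case VI} absorption: applying $A^2-2AB\ge -B^2$ to the pair of terms containing $e^{2(p-1)(q-1)\omega}$ and $e^{(p-1)(q-1)\omega}$ kills the exponential factor and yields
\[
a^{2}\mu_{n,p,\alpha}e^{2(p-1)(q-1)\omega}f^{2-p}+\Lambda\ \ge\ -\frac{1}{4\mu_{n,p,\alpha}}\Bigl(\frac{2}{n-1}-(q-1)\Bigr)^{2}(p-1)^{2}f^{2}.
\]
Collecting the $f^2$ terms produces the coefficient $\sigma_2=\sigma_2(n,p,q,\gamma,\alpha)$ given by \eqref{condition VI}, and multiplying through by $2\alpha f^{\alpha+p/2-2}$ yields exactly the asserted pointwise inequality.

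The one remaining point, which is the only real content beyond bookkeeping, is the existence of $\alpha_2\ge 3/2$ making $\sigma_2$ positive. Recall from \eqref{parameter def1} that $\mu_{n,p,\alpha}=\frac{2\alpha-1}{(2\alpha-1)(n-1)+(p-1)}\nearrow \frac{1}{n-1}$ as $\alpha\to\infty$. Therefore
\[
\lim_{\alpha\to\infty}\sigma_2=(p-1)^{2}\Bigl[\frac{1}{n-1}-\frac{n-1}{4}\Bigl(\frac{2}{n-1}-(q-1)\Bigr)^{2}\Bigr],
\]
and a direct algebraic check (already carried out in \secref{section3.2}, leading to \eqref{need2}) shows that this limit is strictly positive precisely when $1<q<\tfrac{n+3}{n-1}$. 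By monotonicity of $\mu_{n,p,\alpha}$ in $\alpha$ we may then select $\alpha_2\ge 3/2$, depending only on $n,p,q,\gamma$, beyond which $\sigma_2$ is positive, completing the lemma. The main subtlety is keeping careful track of the monotone dependence of $\mu_{n,p,\alpha}$ on $\alpha$ so that the threshold $\alpha_2$ is genuinely independent of the solution $\omega$; everything else is a direct transcription of the Case VI computation.
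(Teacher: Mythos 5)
Your proposal is correct and follows essentially the same route as the paper: the paper's proof of this lemma simply combines \eqref{estimate6}, \eqref{condition VI}, \eqref{inequ for sigma2} and \eqref{need2}, i.e.\ the Case VI completion-of-square absorption of the exponential terms into the $f^2$ coefficient and the observation that $\mu_{n,p,\alpha}\to\frac{1}{n-1}$ makes $\sigma_2>0$ for all large $\alpha$ exactly when $1<q<\frac{n+3}{n-1}$. Your write-up just retraces those steps (including the correct factorization $\bigl(\tfrac{2(p-1)}{n-1}-(p-1)(q-1)\bigr)^2=(p-1)^2\bigl(\tfrac{2}{n-1}-(q-1)\bigr)^2$ and the monotonicity of $\mu_{n,p,\alpha}$ in $\alpha$), so nothing further is needed.
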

  \begin{lem}\label{lem3.5}
 	Let $(M^{n},g)$ be a complete Riemannian manifold satisfying $Ric(M)\ge -(n-1)K g$ for some constant $K\ge 0$. Denote $b=\gamma-\tfrac{1}{p-1}< 0$ and $c=1+\frac{1}{b(p-1)}$. If parameters $a,p,q,\gamma$ and $n$ satisfy
 	\begin{equation*}
 		a\left[\frac{n+1}{n-1}+\frac{2-(n-1)(q-1)}{b(n-1)(p-1)}\right]\le 0,
 	\end{equation*}
 	then the following estimate holds
 	\[\mathcal{L} (f^{\alpha})\geq\frac{2\alpha c^{2}}{n-1}\,f^{\alpha+\frac{p}{2}}-2\alpha(n-1)Kf^{\alpha+\frac{p}{2}-1}-a_{1}\alpha f^{\alpha+\frac{p}{2}-\frac{3}{2}}|\nabla f|.\]
 \end{lem}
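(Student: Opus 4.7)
The plan is to mirror the proof of \lemref{lem3.1} almost verbatim, but working with equation \eqref{equ3 for omega} rather than \eqref{equ1 for omega}, since for $b<0$ the appropriate pressure transformation is $v = -\gamma u^{b}/b$ and the subsequent logarithmic substitution $\omega = -(p-1)\log v$ produces \eqref{equ3 for omega}. The crucial sign difference is that the exponential now enters with coefficient $+l$ in place of $-d$, and \textbf{Case III} of \secref{section3.1} has already carried out every computation for this variant; the lemma will be obtained by matching hypotheses, quoting \eqref{esitmate3}, and rescaling.

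First I would set $v = -\gamma u^{b}/b>0$ and $\omega = -(p-1)\log v$, and confirm that $\omega$ satisfies \eqref{equ3 for omega}. Then I would recompute $\omega_{11}$ via \eqref{equ 3.1}, the identity $\langle\nabla\Delta_{p}\omega,\nabla\omega\rangle = cpf^{p/2}\omega_{11} - kle^{k\omega}f$, and the expansion of $\frac{1}{n-1}(\sum_{i\ge 2}\omega_{ii})^{2}$ exactly as in the derivation of \eqref{equ 3.19}. Substituting these together with the Cauchy–Schwarz bound \eqref{equ 3.4} and the curvature hypothesis into \eqref{lemma 2.2}, and then absorbing the $\omega_{11}$ -- $e^{k\omega}f^{1-p/2}$ cross term through $a^{2}-2ab\ge -b^{2}$ (the analogue of \eqref{equ 3.9}), leads to inequality \eqref{equ 3.19}.

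Next I would verify that the hypothesis is precisely condition \eqref{condition III}. A direct algebraic unpacking gives
\[
k + \frac{2c}{n-1} \;=\; \frac{n+1}{n-1} + \frac{2-(n-1)(q-1)}{b(n-1)(p-1)},
\]
and since $b<0$ forces $-b/\gamma>0$, the coefficient $l = a(p-1)^{p-1}(-b/\gamma)^{(q-1)/b}$ inherits the sign of $a$. Therefore the assumption
\[
a\left[\frac{n+1}{n-1}+\frac{2-(n-1)(q-1)}{b(n-1)(p-1)}\right]\le 0
\]
is literally $l\bigl(k + \tfrac{2c}{n-1}\bigr)\le 0$, placing us in \textbf{Case III}. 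Discarding the now non-positive last term of \eqref{equ 3.19} together with the non-negative $\mu_{n,p,\alpha}l^{2}e^{2k\omega}f^{2-p}$ term yields \eqref{esitmate3}, and multiplying through by $2\alpha f^{\alpha+p/2-2}$ gives the claimed inequality.

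I expect no substantive obstacle beyond careful sign bookkeeping: one must track how $b<0$ propagates through the definitions of $l$, the fractional exponent $(q-1)/b$, and the coefficient $k + 2c/(n-1)$, but once those identifications are clean the statement is a transparent repackaging of the \textbf{Case III} estimate and requires no input beyond \eqref{lemma 2.2} and the Ricci lower bound.
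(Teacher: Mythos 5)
Your proposal is correct and follows essentially the same route as the paper, which proves \lemref{lem3.5} by combining the \textbf{Case III} estimate \eqref{esitmate3} with the sign reduction \eqref{cd2}, exactly as you do via the identity $k+\tfrac{2c}{n-1}=\tfrac{n+1}{n-1}+\tfrac{2-(n-1)(q-1)}{b(n-1)(p-1)}$ and the positivity of $-b/\gamma$. The only nitpick is a wording slip (inherited from the paper's own phrasing of \eqref{condition III}): under the hypothesis the last term of \eqref{equ 3.19}, namely $-l\bigl(k+\tfrac{2c}{n-1}\bigr)e^{k\omega}f^{2-\frac{p}{2}}$, is nonnegative, which is precisely why it may be discarded from the lower bound.
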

 \begin{proof}[Proof of \lemref{lem3.1}:]
 The assertion is an immediate consequence of \eqref{condition I}, \eqref{esitmate1} and \eqref{cd1} .
 \end{proof}
 \begin{proof}[Proof of \lemref{lem3.2}:]
 	It is a direct deduction from \eqref{estimate2}, \eqref{condition II}, \eqref{estimate4}, \eqref{inequ for sigma1} and \eqref{need1}. 
 \end{proof}
 \begin{proof}[Proof of \lemref{lem3.3}:]
  It easily follows from \eqref{condition V} and \eqref{esitmate5}.
 \end{proof}
 \begin{proof}[Proof of \lemref{lem3.4}:]
 	A combination of \eqref{estimate6}, \eqref{condition VI}, \eqref{inequ for sigma2} and \eqref{need2}  asserts it.
 \end{proof}
 \begin{proof}[Proof of \lemref{lem3.5}:]
 	The assertion is an immediate consequence of  \eqref{condition III}, \eqref{esitmate3}  and \eqref{cd2}.
 \end{proof}

 \section{$L^{\infty}$ bound of $|\nabla \omega|^{2}$ } \label{standard section}
 \subsection{Integral inequality}
 In  section \ref{pr for linear p-laplace}, we have derived five lemmas. For the sake of convenience, we present them in the following standardized format
 \begin{equation}\nonumber
 	\mathcal{L} (f^{\alpha})\geq  2\alpha\sigma f^{2}-2\alpha(n-1)Kf^{\alpha+\frac{p}{2}-1}-a_{3}\alpha f^{\alpha+\frac{p}{2}-\frac{3}{2}}|\nabla f|.
 \end{equation}
	
	In the five lemmas above, $\sigma = \sigma(n,p,q,\gamma,\alpha)$  takes the value $\frac{c^{2}}{n-1}$ in \lemref{lem3.1} and \lemref{lem3.5}, $\sigma_{1}$ in \lemref{lem3.2}, $\frac{(p-1)^{2}}{n-1}$ in \lemref{lem3.3}, and $\sigma_{2}$ in \lemref{lem3.4}.  The constant $a_{3}$ equals $a_{1}$ in  lemmas \ref{lem3.1}, \ref{lem3.2} and \ref{lem3.5}, and $a_{2}$ in lemmas \ref{lem3.3} and \ref{lem3.4}.
   Meanwhile, we choose $\alpha_{0}=\max\left\{\alpha_{1},\alpha_{2},\frac{3}{2}\right\}$ to guarantee that the above formula is valid for any $\alpha \ge \alpha_{0}$. Our aim is to give an integral inequality of $f$. For convenience, we fix $\alpha=\alpha_{0}$  in the following part and obtain
 \begin{equation}\label{lemma for integral}
 	\mathcal{L} (f^{\alpha_{0}})\geq  2\alpha_{0}\sigma f^{2}-2\alpha_{0}(n-1)Kf^{\alpha_{0}+\frac{p}{2}-1}-a_{3}\alpha_{0} f^{\alpha_{0}+\frac{p}{2}-\frac{3}{2}}|\nabla f|.
 \end{equation}
 
 The following step is a standard procedure. For the readers' convenience, we sketch the proof.  Firstly, we choose a geodesic ball $\Omega= B(o,R)\subset M$ and select the test function $\psi$ as follows
 \begin{equation*}
 	\psi=  f_{\epsilon}^{t}\,\eta^2,
 \end{equation*}
 where $\eta\in C^{\infty}_0(\Omega,\BR)$ is non-negative and $f_\epsilon=(f-\epsilon)^+$ with respect to some $\epsilon>0$. $t$ is greater than 1 and will be determined later. Integrate \eqref{lemma for integral} over 
 the region $\Omega$, there holds
 \begin{subequations}\label{equ 4.2}
 	\begin{align}	
 		&-\int_{\Omega} \alpha_{0} tf^{\alpha_{0}+\frac{p}{2}-2}f^{t-1}_{\epsilon}|\nabla f|^2\eta^2
 		+
 		\alpha_{0} t(p-2)f^{\alpha_{0}+\frac{p}{2}-3}f^{t-1}_\epsilon \langle \nabla f,\nabla \omega\rangle^{2}
 		\eta^{2}\notag
 		\\
 		&-\int_{\Omega} 2\alpha_{0}\eta f^{\alpha_{0}+\frac{p}{2}-2}f^{t}_\epsilon \langle \nabla f,\nabla\eta \rangle +2\alpha_{0}\eta(p-2)f^{\alpha_{0}+\frac{p}{2}-3}f^{t}_\epsilon \langle \nabla f,\nabla \omega\rangle \langle  \nabla \omega,  \nabla\eta\rangle \tag{4.2}  \\ 
 		\geq & 2\alpha_{0}\sigma\int_{\Omega} f^{\alpha_{0}+\frac{p}{2} }f^{t}_{\epsilon}\eta^2  -2\alpha_{0}(n-1)K\int_{\Omega} f^{\alpha_{0}+\frac{p}{2} -1}f^{t}_\epsilon\eta^2- a_3\alpha_{0}\int_{\Omega} f^{\alpha_{0}+\frac{p}{2}-\frac{3}{2} }f^{t}_\epsilon|\nabla f|\eta^2. \notag
 	\end{align}
 \end{subequations}		
 In order to handle terms involving inner product, we use such inequalities
 \begin{equation}\label{equ 4.3}
 	f^{t-1}_\epsilon |\nabla f|^2 +(p-2)f^{t-1}_\epsilon f^{-1}\langle\nabla f,\nabla \omega\rangle^2\geq c_{1} f^{t-1}_\epsilon|\nabla f|^2,
 \end{equation}
 \begin{equation}\label{equ 4.4}
 	f^{t}_\epsilon\langle\nabla f,\nabla\eta\rangle+ (p-2)f^{t}_\epsilon f^{-1}\langle\nabla f,\nabla \omega\rangle\langle \nabla \omega,  \nabla\eta\rangle \geq -(p+1)  f^{t}_\epsilon|\nabla f||\nabla\eta|,
 \end{equation}	

 where\, $c_{1}=\min\left\{1,p-1\right\}$. Thus we have 		

 \begin{align}\label{equ 4.5}
 	\begin{split}
 		-\int_{\Omega} \alpha_{0} tf^{\alpha_{0}+\frac{p}{2}-2}f^{t-1}_{\epsilon}|\nabla f|^2\eta^2
 		&+
 		\alpha_{0} t(p-2)f^{\alpha_{0}+\frac{p}{2}-3}f^{t-1}_\epsilon \langle \nabla f,\nabla \omega\rangle^{2}
 		\eta^{2} \\ 
 		&\le 	-\int_{\Omega} \alpha_{0} t\, c_{1} f^{\alpha_{0}+\frac{p}{2}-2} f^{t-1}_{\epsilon} \eta^{2} 
 		|\nabla f|^{2},
 	\end{split}
 \end{align}
 and
 \begin{align}\label{equ 4.6}
 	\begin{split}
 		-\int_{\Omega} 2\alpha_{0}\eta f^{\alpha_{0}+\frac{p}{2}-2}f^{t}_\epsilon \langle \nabla f,\nabla\eta \rangle &+2\alpha_{0}\eta(p-2)f^{\alpha_{0}+\frac{p}{2}-3}f^{t}_\epsilon \langle \nabla f,\nabla \omega\rangle \langle  \nabla \omega,  \nabla\eta\rangle \\
 		&\le 	\int_{\Omega} 2(p+1) \alpha_{0} \eta  f^{\alpha_{0}+\frac{p}{2}-2} f^{t}_{\epsilon} \eta
 		|\nabla f| |\nabla \eta|.
 	\end{split}
 \end{align}
 Substitute \eqref{equ 4.5} and \eqref{equ 4.6} into the formula \eqref{equ 4.2}, divide both sides by $\alpha_{0}$ and let $\epsilon$ tend to  zero, a straightforward computation shows that
 %		\noindent {\bf Proof.}
 \begin{align}\label{equ 4.7}
 	\begin{split}
 		&2\sigma\int_{\Omega} f^{\alpha_{0}+\frac{p}{2}+t}\eta^2
 		+
 		c_{1}  t\int_{\Omega} f^{\alpha_{0}+\frac{p}{2}+t-3}|\nabla f|^2\eta^2\\
 		\leq &
 		2(n-1) K\int_{\Omega} f^{\alpha_{0}+\frac{p}{2}+t-1}\eta^2
 		+ a_3 \int_{\Omega} f^{\alpha_{0}+\frac{p-3}{2}+t }|\nabla f|\eta^2
 		\\
 		&
 		+2 (p+1)\int_{\Omega}  f^{\alpha_{0}+\frac{p}{2}+t-2}|\nabla f||\nabla\eta|\eta.
 	\end{split}
 \end{align}
 As a consequence of well-known Cauchy-Schwarz's inequality, we deduce 
 \begin{equation}
 	a_3 f^{\alpha_{0}+\frac{p-3}{2}+t }|\nabla f|\eta^2\leq
 	\frac{c_{1}t}{4}    f^{\alpha_{0}+\frac{p}{2}+t-3}|\nabla f|^2\eta^2
 	+\frac{ a_3^2}{c_{1}t} f^{\alpha_{0}+\frac{p}{2}+t}\eta^2
 \end{equation}
 and
 \begin{equation}
 	2(p+1)f^{\alpha_{0}+\frac{p}{2}+t-2}|\nabla f||\nabla\eta|\eta\leq
 	\frac{c_{1}t}{4}    f^{\alpha_{0}+\frac{p}{2}+t-3}|\nabla f|^2\eta^2
 	+\frac{4(p+1)^2 }{c_{1}t} f^{\alpha_{0}+\frac{p}{2}+t-1}|\nabla \eta|^2.
 \end{equation}
 We choose $t$ large enough to ensure 
 \begin{equation}\label{equ 4.10}
 	\frac{a_3^2}{c_{1}t}\leq  \sigma.
 \end{equation}
 Combining \eqref{equ 4.7}-\eqref{equ 4.10}, we can obtain
 \begin{align}
 	\label{equ 4.11}
 	\begin{split}
 		& \sigma\int_{\Omega} f^{\alpha_{0}+\frac{p}{2}+t}\eta^2
 		+
 		\frac{c_{1}  t}{2}\int_{\Omega} f^{\alpha_{0}+\frac{p}{2}+t-3}|\nabla f|^2\eta^2 \\
 		\leq  &
 		2(n-1) K\int_{\Omega} f^{\alpha_{0}+\frac{p}{2}+t-1}\eta^2
 		+\frac{4(p+1)^2 }{c_{1}t}  \int_{\Omega} f^{\alpha_{0}+\frac{p}{2}+t-1}|\nabla \eta|^2.
 	\end{split}
 \end{align}
 There is a fact that 
 \begin{align}\nonumber
 	\begin{split}
 		\left|\nabla \left(f^{\frac{\alpha_{0}+t-1}{2}+\frac{p}{4} }\eta \right)\right|^2\leq & 2\left|\nabla f^{\frac{\alpha_{0}+t-1}{2}+\frac{p}{4} }\right|^2\eta^2 +2f^{\alpha_{0}+t-1+\frac{p}{2}}|\nabla\eta|^2
 		\\
 		=&\frac{(2\alpha_{0}+2t+p-2)^2}{8}f^{\alpha_{0}+t+\frac{p}{2}-3}|\nabla f |^2\eta^2 +2f^{\alpha_{0}+t-1+\frac{p}{2}}|\nabla\eta|^2 .
 	\end{split}
 \end{align}
 It is equivalent to
 \begin{align}\label{equ 4.12}
 	\begin{split}
 		&\frac{4c_{1}t}{(2\alpha_{0}+2t+p-2)^{2}}\int_{\Omega}\left|\nabla \left(f^{\frac{\alpha_{0}+t-1}{2}+\frac{p}{4} }\eta \right)\right|^2   \\ &\leq\frac{c_{1}t}{2}\int_{\Omega}f^{\alpha_{0}+t+\frac{p}{2}-3}|\nabla f |^2\eta^2
 		+\frac{8c_{1}t}{(2\alpha_{0}+2t+p-2)^{2}}\int_{\Omega} f^{\alpha_{0}+t-1+\frac{p}{2}}|\nabla\eta|^2.
 	\end{split}
 \end{align}
 Substituting \eqref{equ 4.12} into \eqref{equ 4.11}, we get
 \begin{align}\label{equ 4.13}
 	\begin{split}
 		\sigma \int_{\Omega} f^{\alpha_{0}+\frac{p}{2}+t}\eta^2
 		&+\frac{4c_{1}t}{(2\alpha_{0}+2t+p-2)^2}\int_{\Omega}   \left|\nabla \left(f^{\frac{\alpha_{0}+t-1}{2}+\frac{p}{4} }\eta\right)\right|^2 \\
 		&\leq  2(n-1)K  \int_{\Omega} f^{\alpha_{0}+t+\frac{p}{2}-1}\eta^2
 		+\frac{4(p+1)^2 }{c_{1}t} \int_{\Omega} f^{\alpha_{0}+\frac{p}{2}+t-1}|\nabla\eta|^2\\
 		&+\frac{8c_{1}t}{(2\alpha_{0}+2t+p-2)^2}\int_{\Omega} f^{\alpha_{0}+t+\frac{p}{2}-1}|\nabla\eta|^2 .
 	\end{split}
 \end{align}
 To avoid cumbersome expressions, we need to simplify some coefficients in \eqref{equ 4.13}. Choose $c_{3},c_{4}$  to guarantee
 \begin{align*}
 	\frac{c_{3}}{t}\leq  \frac{4c_{1}t}{(2\alpha_{0}+2t+p-2)^2}\quad\text{and}\quad
 	\frac{8c_{1}t}{(2\alpha_{0}+2t+p-2)^2}+\frac{4(p+1)^2}{c_{1}t} \leq\frac{c_{4}}{t}.
 \end{align*}
 Consequently, it yields that
 \begin{align}
 	\label{equ 4.14}
 	\begin{split}
 		& \sigma \int_{\Omega} f^{\alpha_{0}+\frac{p}{2}+t}\eta^2
 		+
 		\frac{c_3}{t}\int_{\Omega}   \left|\nabla \left(f^{\frac{\alpha_{0}+t-1}{2}+\frac{p}{4} }\eta\right)\right|^2 \\
 		&\leq \, 
 		2(n-1)K  \int_{\Omega} f^{\alpha_{0}+t+\frac{p}{2}-1}\eta^2
 		+
 		\frac{c_{4} }{t} \int_{\Omega} f^{\alpha_{0}+\frac{p}{2}+t-1}\left|\nabla\eta\right|^2.
 	\end{split}
 \end{align}
 From Saloff-Coste's Sobolev embedding inequality, we have
 \[\left\|f^{\frac{\alpha_{0}+t-1}{2}+\frac{p}{4} }\eta\right\|_{L^{\frac{2n}{n-2}}(\Omega)}^2\leq e^{C_n(1+\sqrt{K}R)}V^{-\frac{2}{n}}R^{2}\left(\int_{\Omega}\left|\nabla \left(f^{\frac{\alpha_{0}+t-1}{2}+\frac{p}{4} }\eta\right)\right|^2+R^{-2}\int_{\Omega} f^{ \alpha_{0}+t +\frac{p}{2} -1 }\eta ^2\right). \]
 Inserting the above formula into \eqref{equ 4.14}, we immediately gain 
 \begin{align}\label{equ 4.15}
 	\begin{split}
 		& \sigma \int_{\Omega} f^{\alpha_{0}+\frac{p}{2}+t}\eta^2
 		+
 		\frac{c_{3}}{t }e^{-C_n(1+\sqrt{K}R)}V^{\frac{2}{n}}R^{-2}\left\|f^{\frac{\alpha_{0}+t-1}{2}+\frac{p}{4} }\eta\right\|_{L^{\frac{2n}{n-2}}(\Omega)}^{2}\\
 		&\leq \, 
 		2(n-1)K  \int_{\Omega} f^{\alpha_{0}+t+\frac{p}{2}-1}\eta^2+\frac{c_{4}}{ t }\int_{\Omega} f^{\alpha_{0}+t+\frac{p}{2}-1}|\nabla\eta|^2
 		+\frac{c_{3}}{ t }\int_{\Omega} R^{-2}f^{ \alpha_{0} +\frac{p}{2}+t-1 }\eta ^2.
 	\end{split}
 \end{align}
 Take $t_{0} = c_{p,q,n,\gamma}(1+\sqrt{K} R)$, where $$ c_{p,q,n,\gamma}=\max\left\{C_n+1, \frac{ a_{3}^2}{c_{1}\,\sigma}\right\}.$$
 Now we choose some $t\geq t_{0}$. Notice that
 \begin{align*}
 	2(n-1)K  R^2\leq\frac{ 2(n-1)}{c_{p,q,n,\gamma}^2}t_{0}^2\quad \text{ and}\quad \frac{c_{3}}{t}\leq \frac{c_{3}}{c_{p,q,n,\gamma}},
 \end{align*}
 so we can select a constant $c_5 = c_5(n,p,q,\gamma)>0$ such that
 \begin{align}\label{a5}
 	2(n-1)K  R^2+\frac{c_{3}}{t}\leq c_{5}t_{0}^2 \triangleq c_{5} c_{p,q,n,\gamma}^{2} \left(1+\sqrt{K} R\right)^2.
 \end{align}
 It follows from \eqref{equ 4.15} and (\ref{a5}) that
 \begin{align}\label{integral inequality}
 	\begin{split}
 		&\sigma \int_{\Omega} f^{\alpha_{0}+\frac{p}{2}+t}\eta^2
 		+
 		\frac{c_3}{ t }e^{-t_{0}}V^{\frac{2}{n}}R^{-2}\left\|f^{\frac{\alpha_{0}+t-1}{2}+\frac{p}{4} }\eta\right\|_{L^{\frac{2n}{n-2}}(\Omega)}^2\\
 		\leq & \,
 		c_{5}t_{0}^2R^{-2} \int_{\Omega} f^{\alpha_{0}+\frac{p}{2}+t-1}\eta^2+\frac{c_{4}}{t }\int_{\Omega} f^{\alpha_{0}+\frac{p}{2}+t-1}|\nabla\eta|^2.
 	\end{split}
\end{align}
 So far, we have established the desired integral inequality.
 %continued
 \subsection{$L^{\gamma}$  estimate of gradient and Moser iteration}\label{section4.2}
 Now we  prove the $L^{\gamma}$ bound of $f$ in a suitable ball and perform the iteration procedure. For the readers' convenience, we  state the whole proof.
 However, some details may be omitted for simplicity. We state such a lemma at the beginning of this section.
 
 \begin{lem}\label{L Beta Bound}
 	Suppose $\omega$ is a positive solution of equation \eqref{equ1 for omega}, \eqref{equ3 for omega} or \eqref{equ2 for omega} on the geodesic ball $B(o,R)\subset M$. Set\;$f=|\nabla \omega|^2,\;\gamma = \left(\alpha_{0}+t_{0}+\frac{p}{2}-1\right)\frac{n}{n-2}$. Let V  be the volume of geodesic ball $B(o,R)$. Then there exists some constant $c_8 = c_8(n,p, q,\gamma)>0$ such that
 	\begin{align}\label{lpbpund}
 		\|f \|_{L^{\gamma}(B(o,3R/4))}\leq c_8V^{\frac{1}{\gamma}} \frac{t_{0}^2}{ R^2}.
 	\end{align}
 \end{lem}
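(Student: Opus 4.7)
The idea is to apply the integral inequality \eqref{integral inequality} at $t=t_0$ with a fixed cut-off, then use Young's inequality on the right-hand side to absorb the $f^{k-1}$ contributions into the $\sigma\int f^k\eta^2$ term on the left, and finally extract the $L^\gamma$ bound from the Sobolev term that remains on the left.

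Concretely, I would fix $\eta\in C_c^\infty(B(o,R))$ with $\eta\equiv 1$ on $B(o,3R/4)$ and $|\nabla\eta|\le C/R$, set $k=\alpha_0+p/2+t_0$ so that $\gamma=(k-1)\tfrac{n}{n-2}$, and read \eqref{integral inequality} at $t=t_0$ as
\[
\sigma\int f^k\eta^2+\frac{c_3 e^{-t_0}V^{2/n}}{t_0 R^2}\bigl\|f^{(k-1)/2}\eta\bigr\|_{L^{2n/(n-2)}}^2\le \frac{c_5 t_0^2}{R^2}\int f^{k-1}\eta^2+\frac{c_4}{t_0}\int f^{k-1}|\nabla\eta|^2.
\]
Both right-hand side terms are of the form $(\text{coeff})\cdot\int f^{k-1}\cdot(\text{bounded})$. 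Young's inequality
\[
f^{k-1}\le \tfrac{k-1}{k}\,\delta\, f^k+\tfrac{1}{k}\,\delta^{-(k-1)},\qquad \delta>0,
\]
with the choice $\delta\asymp \sigma R^2/(c_5 t_0^2)$ makes the $f^k$-contribution at most $\sigma/2$ of the first left-hand term and leaves a residual proportional to $(t_0^2/R^2)^{k-1}V$. After absorbing, one obtains
\[
\frac{c_3 e^{-t_0}V^{2/n}}{t_0 R^2}\bigl\|f^{(k-1)/2}\eta\bigr\|_{L^{2n/(n-2)}}^2\le C^k\bigl(t_0^2/R^2\bigr)^k V
\]
for some $C=C(n,p,q,\gamma)$.

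Using $\eta\equiv 1$ on $B(o,3R/4)$ inside the Sobolev norm (so that $\|f^{(k-1)/2}\eta\|_{L^{2n/(n-2)}}^2\ge \|f\|_{L^\gamma(B(o,3R/4))}^{k-1}$), rearranging and taking a $(k-1)$-th root yields
\[
\|f\|_{L^\gamma(B(o,3R/4))}\le C^{k/(k-1)}\,c_3^{-1/(k-1)}\,t_0^{2+3/(k-1)}\,e^{t_0/(k-1)}\,V^{1/\gamma}\,R^{-2},
\]
where I used $V^{(n-2)/(n(k-1))}=V^{1/\gamma}$ and $(2k+1)/(k-1)=2+3/(k-1)$.

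The only non-cosmetic step—and what I expect to be the main obstacle—is to show that the three $k$-dependent correction factors above are bounded by a constant depending only on $n,p,q,\gamma$. This hinges on the observation that $k-1=\alpha_0+p/2+t_0-1\ge t_0$ (since $\alpha_0\ge 3/2$ and $p>1$, so $\alpha_0+p/2\ge 2$): then $t_0/(k-1)\le 1$ makes $e^{t_0/(k-1)}\le e$; the elementary bound $x^{1/x}\le e^{1/e}$ for $x\ge 1$ combined with $3/(k-1)\le 3/t_0$ controls $t_0^{3/(k-1)}$; and $k/(k-1)\le 2$ controls $C^{k/(k-1)}\le C^2$. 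Collecting these uniform bounds produces the universal constant $c_8=c_8(n,p,q,\gamma)$ and closes the estimate \eqref{lpbpund}. The delicate bookkeeping is in the Young-absorption step: one must track how the factor $\delta^{-(k-1)}$ interacts with the prefactor $c_5 t_0^2/R^2$ so that the final residual is precisely of order $(t_0^2/R^2)^k V$, since anything worse would not give a clean $t_0^2/R^2$ scaling after the $(k-1)$-th root.
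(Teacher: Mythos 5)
Your overall strategy (absorb $f^{k-1}$ terms via a Young-type step, drop the $\sigma\int f^k\eta^2$ term, peel off $\|f\|_{L^\gamma}^{k-1}$ from the Sobolev norm, take a $(k-1)$-th root, and control the $k$-dependent constants using $k-1\ge t_0\ge 1$) is the same as the paper's, and your final bookkeeping with $e^{t_0/(k-1)}\le e$, $t_0^{3/(k-1)}\le e^{3/e}$, and $C^{k/(k-1)}\le C^2$ is correct. However, there is a genuine gap in the absorption step for the gradient-of-cutoff term.

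The issue is that the term $\tfrac{c_4}{t_0}\int f^{k-1}|\nabla\eta|^2$ cannot be absorbed into $\sigma\int f^k\eta^2$ with a fixed first-order cutoff. Applying Young's inequality pointwise to $f^{k-1}$ there produces $\delta\int f^k|\nabla\eta|^2$, which lives on the annulus $B(o,R)\setminus B(o,3R/4)$ where $\eta$ may be near $0$, so it is \emph{not} dominated by $\sigma\int f^k\eta^2$. If instead you try to insert an $\eta$-weight via $f^{k-1}|\nabla\eta|^2=\bigl(f^{k-1}\eta^{2(k-1)/k}\bigr)\bigl(|\nabla\eta|^2\eta^{-2(k-1)/k}\bigr)$ and Young with exponents $k/(k-1)$ and $k$, the residual is $\delta^{-(k-1)}|\nabla\eta|^{2k}\eta^{-2(k-1)}$, which blows up near $\partial\,\mathrm{supp}\,\eta$ unless $\eta$ vanishes to order at least $k$. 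This is precisely why the paper does \emph{not} use a fixed cutoff: it takes $\eta=\eta_0^{\alpha_0+p/2+t_0}$, so that $R^2|\nabla\eta|^2\le c_6 t_0^2\,\eta^{(2k-2)/k}$, and then Hölder with exponents $k/(k-1)$ and $k$ followed by Young on the resulting scalars yields a term $\tfrac{\sigma}{2}\int f^k\eta^2$ plus a benign residual of order $(t_0/R^2)^k V$. Your plan correctly treats the first right-hand-side term (the $K$-term, which is what the paper's $\Omega_1/\Omega_2$ split handles) but needs to be repaired for the $|\nabla\eta|^2$ term by adopting the $t_0$-dependent power cutoff; once that is done, the rest of your argument goes through.
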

 \begin{proof}
 	Through a careful observation to \eqref{integral inequality}, we divide the region $\Omega$ into two disjoint parts $\Omega_{1}$ and $\Omega_{2}$ as follows
 	\begin{align}\nonumber
 		\Omega=\Omega_{1}\cup \Omega_{2},\quad \Omega_{1}\cap\Omega_{2}=  \varnothing,\quad\Omega_{1}=\left\{f\geq  \frac{2c_{5}t_{0}^2}{\sigma R^{2}}\right\}.
 	\end{align}
 	In $\Omega_{1}$, we have
 	\begin{equation*}
 		c_{5}t_{0}^{2}R^{-2} \int_{\Omega} f^{\alpha_{0}+\frac{p}{2}+t-1}\eta^2
 		\leq
 		\frac{\sigma}{2}\int_{\Omega} f^{\alpha_{0}+\frac{p}{2}+t}\eta^2.
 	\end{equation*}
 	By decomposing $\Omega$, a direct computation yields
 	\begin{align}\label{equ 4.19}
 		\begin{split}
 			c_{5}t_{0}^2R^{-2} \int_{\Omega} f^{\alpha_{0}+\frac{p}{2}+t-1}\eta^2	
 			\leq
 			\frac{\sigma}{2}\int_{\Omega} f^{\alpha_{0}+\frac{p}{2}+t}\eta^2+
 			\frac{2c_{5}t_{0}^2}{R^2} \left(\frac{2c_{5}t_{0}^2}{\sigma R^2}\right)^{\alpha_{0}+\frac{p}{2} +t-1 }V.
 		\end{split}
 	\end{align}
 	Combining \eqref{integral inequality} and \eqref{equ 4.19}, and choosing $t=t_{0}$, we get	  
 	\begin{align}\label{equ 4.20}
 		\begin{split}
 			&\frac{\sigma}{2}\int_{\Omega} f^{\alpha_{0}+\frac{p}{2}+t_{0}}\eta^2+ \frac{c_3}{ t_{0} }e^{-t_{0}}V^{\frac{2}{n}}R^{-2}\left\|f^{\frac{\alpha_{0}+t_{0}-1}{2}+\frac{p}{4} }\eta\right\|_{L^{\frac{2n}{n-2}}(\Omega)}^2\\
 			&\leq \frac{c_{5}t_{0}^2}{R^2} \left(\frac{2c_{5}t_{0}^2}{ \sigma R^2}\right)^{\alpha_{0}+\frac{p}{2} +t_{0}-1 }V
 			+\frac{c_{4}}{ t_{0} }\int_{\Omega} f^{\alpha_{0}+\frac{p}{2}+t_{0}-1}|\nabla\eta|^2.
 		\end{split}
 	\end{align}
 	Now we select the function $\eta=\eta_{0}^{\alpha_{0}+\frac{p}{2}+t_{0}}$, where $\eta_{0}\in C^{\infty}_{0}(B(o,R))$ satisfies 
 	\[\begin{cases}
 		0\leq\eta_0\leq 1,\quad \eta_0\equiv 1\text{ in }  B(o,\frac{3R}{4});\\
 		|\nabla\eta_0|\leq\frac{\tilde{C}(n)}{R}.
 	\end{cases}\]
 	It can be easily seen that
 	\begin{align*}
 		c_{4}R^{2} |\nabla\eta|^2\leq c_{4}\tilde{C}^{2} \left(\alpha_{0}+\frac{p}{2}+t_{0}\right )^2\eta ^{\frac{2\alpha_{0}+2t_{0}+p-2}{\alpha_{0}+p/2+t_{0}}}
 		\leq c_{6}t^2_0\eta^{\frac{2\alpha_{0}+p+2t_{0}-2}{\alpha_{0}+p/2+t_{0}}}.
 	\end{align*}
 	By using H\"{o}lder's inequality and Young's inequality, we get 
 	\begin{align}\label{equ 4.21}
 		\begin{split}
 			\frac{c_{4}}{t_{0}}\int_{\Omega}f^{\frac{p}{2}+\alpha_{0}+t_{0}-1}|\nabla\eta|^2
 			%		 		\leq &\frac{a_6t_{0}}{R^2} \int_{\Omega}f^{\frac{p}{2}+\alpha_{0}+t_{0}-1}
 			%		 		\eta^{\frac{2\alpha_{0}+p+2t_{0}-2}{\alpha_{0}+p/2+t_{0}}}\\
 			\leq &\frac{c_6t_{0}}{R^2}  \left(\int_{\Omega}f^{\alpha_{0}+t_{0}+ \frac{p}{2} }\eta^2\right)^{\frac{\alpha_{0}+p/2+t_{0}-1}{\alpha_{0}+p/2+t_{0}}}V^{\frac{ 1}{\alpha_{0}+t_{0}+ p/2 }}\\
 			\leq &\frac{\sigma}{2}\left(\int_{\Omega}f^{ \alpha_{0}+t_{0}+\frac{p}{2} }\eta^2 + \left(\frac{2c_{6}t_{0} }{\sigma R^2}\right)^{ \alpha_{0}+t_{0}+p/2 }V\right).
 		\end{split}
 	\end{align}
 	Substituting \eqref{equ 4.20} and \eqref{equ 4.21} into \eqref{integral inequality}, we obtain
 	\begin{align}\label{equ 4.22}
 		\begin{split}
 			&\left(\int_{\Omega}f^{\frac{n(\alpha_{0}+p/2+t_{0}-1)}{n-2}}\eta^{\frac{2n}{n-2}}\right)^{\frac{n-2}{n}}\\
 			\leq &\frac{t_{0}}{c_3} e^{t_{0}}V^{1-\frac{2}{n}}R^2\left[\frac{2c_5t_{0}^2}{R^2} \left(\frac{2c_{5}t_{0}^2}{\sigma R^2}\right)^{\alpha_{0}+t_{0}+\frac{p}{2} -1 } + \frac{c_{6}t_{0}}{R^2} \left(\frac{2c_{6}t_{0} }{\sigma R^2}\right)^{\alpha_{0}+t_{0}+\frac{p}{2} -1 }\right]\\
 			\leq &c_7^{t_{0}}e^{t_{0}}V^{1-\frac{2}{n}}t_{0}^3\left( \frac{t_{0}^2}{ R^2}\right)^{\alpha_{0}+t_{0}+\frac{p}{2} -1}.
 		\end{split}
 	\end{align}
 	We pick $c_{7}$ to meet the condition
 	\[c_{7}^{t_{0}} \geq \frac{2c_{5}}{c_{3}}\left(\frac{2c_{5}}{\sigma }\right)^{\alpha_{0}+t_{0}+\frac{p}{2}-1}
 	+\frac{c_{6}}{c_3}\left(\frac{2c_{6}}{\sigma t_{0}}\right)^{\alpha_{0}+t_{0}+\frac{p}{2}-1}.\]
 	Let
 	$$
 	c_8 = t_0^{\frac{3}{\alpha_{0}+t_0+p/2-1}}c_{7}^{\frac{t_0}{\alpha_{0}+t_0+p/2 -1}}
 	$$
 	and take the $1/(\alpha_{0}+\frac{p}{2}+t_{0}-1)$ root on both sides of \eqref{equ 4.22}. After a proper simplification, there holds
 	\begin{align}
 		\left\|f\eta^{\frac{2}{\alpha_{0}+t_0+p/2-1}}\right\|_{L^{\gamma}(\Omega)}\leq c_8V^{\frac{1}{\gamma}} \frac{t_0^2}{ R^2}.
 	\end{align}
 	The above formula implies \eqref{lpbpund} notably. 	
 \end{proof} 	 
 %		 \begin{align*}
 	%		 		\left\|f\eta^{\frac{2}{\alpha_{0}_0+t_0+p/2-1}}\right\|_{L^{\beta}(\Omega)}\leq a_7e^{\frac{t_0}{\alpha_{0}_0+t_0+\frac{p}{2} -1}}V^{1/\beta}
 	%		 	t_0^{\frac{3}{\alpha_{0}_0+t_0+p/2-1}}\frac{t_0^2}{ R^2}\leq a_8V^{\frac{1}{\beta}} \frac{t_0^2}{ R^2},
 	%		 \end{align*}
 Then we execute so-called Nash-Moser iteration, which suggests  $L^{\infty}$ bound of $f$ in the ball $B(o,\frac{R}{2})$.
 \begin{lem}\label{L infinity bound}
 	Let $(M,g)$ be a complete Riemannian  manifold satisfying $Ric(M)\ge -(n-1)K g$ for some $K\ge 0$. Denote $f=|\nabla \omega|^2$. Under the same assumptions as in $\lemref{L Beta Bound}$, there exists $c_{11} = c_{11}(n,p,q,\gamma)>0$ such that
 	\begin{align*}
 		\|f\|_{L^{\infty}(B(o,R/2))}\leq & c_{11}\frac{(1+\sqrt{K}R)^2}{R^2}.
 	\end{align*}
 \end{lem}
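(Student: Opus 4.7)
The plan is to iterate inequality \eqref{integral inequality} on a sequence of nested balls with geometrically increasing exponents, combining Sobolev embedding with the base $L^{\gamma}$ estimate of \lemref{L Beta Bound}. Set $\chi = n/(n-2) > 1$ and choose shrinking radii $R_k = R/2 + R/2^{k+2}$ so that $R_0 = 3R/4$ and $R_k \to R/2$. On each annular region pick $\eta_k \in C_0^\infty(B(o,R_k))$ with $\eta_k \equiv 1$ on $B(o,R_{k+1})$ and $|\nabla \eta_k| \le C(n)\, 2^{k+2}/R$. Define $\beta_0 = \alpha_0 + t_0 + p/2 - 1$ and select $t_k$ so that $\beta_k := \alpha_0 + t_k + p/2 - 1 = \chi^k \beta_0$; in particular $t_k \ge t_0$ for every $k$, which is exactly the regime where \eqref{integral inequality} applies.

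Next I would plug $\eta = \eta_k$ and $t = t_k$ into \eqref{integral inequality}, discard the first (positive) term on the left, and feed the second term into Saloff-Coste's embedding (\lemref{salof}) to convert the gradient bound into the $L^{2\chi}$-norm of $f^{\beta_k/2}\eta_k$. On the right-hand side both integrals have the shape $\int f^{\beta_k}\cdot (\text{weight})$, where the weights are bounded by $c_5 t_0^2/R^2$ and $c_4\, 4^k/(t_k R^2)$ respectively. After taking the $1/\beta_k$-th root this yields the one-step recursion
\begin{equation*}
\|f\|_{L^{\beta_{k+1}}(B(o,R_{k+1}))} \;\le\; \bigl(A_k\bigr)^{1/\beta_k}\,\|f\|_{L^{\beta_k}(B(o,R_k))},
\end{equation*}
with $A_k$ of the form $C(n,p,q,\gamma)^{k} \, t_k \, e^{t_0} \, V^{-2/n} \, t_0^2/R^2 \cdot$(polynomial in $k$), where the $V^{-2/n}$ comes from Sobolev and cancels against the $V^{2/n}$ that appears when one combines consecutive balls via $|B_k|\approx V$.

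Iterating this recursion starting from the base bound $\|f\|_{L^{\beta_1}(B(o,3R/4))} = \|f\|_{L^\gamma(B(o,3R/4))} \le c_8 V^{1/\gamma}\, t_0^2/R^2$ of \lemref{L Beta Bound}, and taking logarithms, I obtain
\begin{equation*}
\log\|f\|_{L^{\beta_{k+1}}(B(o,R_{k+1}))} \;\le\; \sum_{j=0}^{k}\frac{\log A_j}{\beta_j}+\log\|f\|_{L^{\beta_0}(B(o,3R/4))}.
\end{equation*}
Because $\beta_j = \chi^j \beta_0$ grows geometrically, the series $\sum_j (\log A_j)/\beta_j$ converges to a constant depending only on $n,p,q,\gamma$ (in particular $\sum_j k/\chi^j < \infty$ absorbs the polynomial factors, and $\sum_j 1/\beta_j$ times $\log(t_0^2 e^{t_0}/R^2)$ contributes only a constant multiple of $\log(t_0^2/R^2)$ plus a bounded remainder). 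Letting $k \to \infty$ gives $\|f\|_{L^\infty(B(o,R/2))} \le c_{11}\, t_0^2/R^2$, and recalling $t_0 = c_{p,q,n,\gamma}(1+\sqrt{K}R)$ yields the stated bound.

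The main technical difficulty I expect is bookkeeping the $V$-dependence and the exponential factor $e^{t_0}$: the former must cancel exactly between Sobolev's $V^{2/n}$ and the base $L^\gamma$ bound's $V^{1/\gamma}$ when $\gamma \to \infty$, and the latter contributes $t_0\sum_j 1/\beta_j$ in the log-sum, which is harmless only because $\beta_0 \ge t_0$ makes this sum bounded independently of $K$ and $R$. Ensuring these two cancellations go through cleanly --- and that the growing $t_k$ in the coefficient $c_4/t_k$ does not spoil absorption --- is the part that requires care.
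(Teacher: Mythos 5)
Your strategy is exactly the paper's: Moser iteration of \eqref{integral inequality} on balls shrinking from $B(o,3R/4)$ to $B(o,R/2)$, exponents multiplied by $n/(n-2)$ at each step, base case supplied by \lemref{L Beta Bound}, the factor $e^{t_0}$ tamed by $\beta_j\ge\beta_0> t_0$, and the accumulated volume powers cancelling the $V^{1/\gamma}$ of the base estimate. (Note that the Sobolev embedding is already built into \eqref{integral inequality}, so there is nothing left to ``feed into'' \lemref{salof} at the iteration stage; this is harmless.)

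There is, however, one bookkeeping slip that, as literally written, is inconsistent with your conclusion. In \eqref{integral inequality} the Sobolev term on the left carries the coefficient $\tfrac{c_3}{t}e^{-t_0}V^{2/n}R^{-2}$; when you divide through by it, the resulting $R^{2}$ cancels the $R^{-2}$ sitting in both right-hand weights ($c_5t_0^2R^{-2}$ and $|\nabla\eta_k|^2\sim 4^kR^{-2}$). Hence the one-step constant is of size $A_k\sim C(n,p,q,\gamma)^{\,k}\,t_0^{3}e^{t_0}V^{-2/n}$ and contains no $R$ at all --- not the factor $t_0^2/R^2$ you wrote. This matters: if $A_k$ really carried $R^{-2}$, then $\prod_k A_k^{1/\beta_k}$ would contribute a factor of order $R^{-c(n)/\beta_0}$, which is unbounded as $R\to0$ (there $t_0$ and hence $\beta_0$ stay bounded), and your parenthetical claim that the log-sum ``contributes a constant multiple of $\log(t_0^2/R^2)$'' would then contradict the asserted bound $\|f\|_{L^\infty(B(o,R/2))}\le c_{11}t_0^2/R^2$: no extra power of $t_0^2/R^2$ may appear. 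Once the $R^{-2}$ cancellation is recorded, $\sum_j(\log A_j)/\beta_j$ is a genuine constant depending only on $n,p,q,\gamma$ (use $\beta_j\ge t_0$ for the $e^{t_0}$ and $t_0^{3}$ factors and geometric growth of $\beta_j$ for the $C^{\,j}$ factors), the $V$-powers sum exactly to $-1/\gamma$ and cancel the base bound's $V^{1/\gamma}$, and the argument closes precisely as in the paper.
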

 \begin{proof}
 	By neglecting the term involving $\sigma$ in \eqref{integral inequality}, we obtain
 	\begin{align}\label{equ 4.24}
 		\begin{split}
 			&\frac{c_3}{t}e^{-t_{0}}V^{\frac{2}{n}}R^{-2}\left\|f^{\frac{\alpha_{0}+t-1}{2}+\frac{p}{4} }\eta\right\|_{L^{\frac{2n}{n-2}}(\Omega)}^2
 			\\&	\leq 
 			c_{5}t_{0}^2R^{-2} \int_{\Omega} f^{\alpha_{0}+\frac{p}{2}+t-1}\eta^2+\frac{c_4}{t }\int_{\Omega} f^{\alpha_{0}+\frac{p}{2}+t-1}|\nabla\eta|^2.		
 		\end{split}
 	\end{align}
 	Some of the necessary settings are described below. Set
 	\begin{equation*}
 		\Omega_{k}=B(o,r_{k}), \quad \,\text{where} \;\, r_{k}=\frac{R}{2}+\frac{R}{4^{k}}\;\;\text{and}\;\,k\in \BN_{+}\,.
 	\end{equation*}
 	We choose $\eta_{k}$ such that
 	\begin{equation*}
 		\eta_{k}\in C^{\infty}(\Omega_{k}), \; 0\le\eta_{k}\le 1,\; \eta_{k}\equiv\,1\;\text{in}\;\Omega_{k+1}\;\,\text{and}\;|\nabla \eta_{k}|\le \frac{\tilde{C}(n)4^{k}}{R}.
 	\end{equation*}
 	We replace $\eta$ in \eqref{equ 4.24} by $\eta_{k}$ to gain
 	\begin{equation*}
 		c_{3}e^{-t_{0}}V^{\frac{2}{n}} \left\|f^{\frac{\alpha_{0}+t-1}{2}+\frac{p}{4} }\eta_{k}\right\|_{L^{\frac{2n}{n-2}}(\Omega_{k})}^2
 		\leq  \left(c_{5}t_{0}^{2}t +c_{4} \tilde{C}^{2}16^{k}\right)\int_{\Omega_{k}} f^{\alpha_{0}+\frac{p}{2}+t-1}.
 	\end{equation*}
 	To give the iterative formula, we set $\gamma_{1}=\gamma,\gamma_{k+1}=\frac{n\gamma_{k}}{n-2}$ and let $t=t_{k}$ to ensure \[ t_{k}+\frac{p}{2}+\alpha_{0}-1=\gamma_{k}.\]
 	Then we deduce that 
 	\begin{align*}
 		c_{3} \left(\int_{\Omega_{k}}f^{\gamma_{k+1}}\eta_k^\frac{2n}{n-2}\right)^{\frac{n-2}{n}}
 		\leq & e^{ t_{0}}V^{-\frac{2}{n}}\left(c_{5}t_{0}^2\left(t_{0}+\frac{p}{2}+\alpha_{0}-1\right)\left(\frac{n}{n-2}\right)^k + c_{4} \tilde{C}^{2}16^{k}\right)\int_{\Omega_{k}} f^{\gamma_{k}},
 	\end{align*}
 	Meanwhile, we choose $c_{9}$ satisfying
 	$$
 	c_{3}c_{9}t_{0}^3\geq \max\left\{c_{5}t_{0}^2\left(\alpha_{0}+t_0+\frac{p}{2}-1\right),\, c_{4} \tilde{C}^{2} \right\}.
 	$$
 	Since $\frac{n}{n-2}<16$, we deduce that
 	\begin{align}\label{equ 4.25}
 		\left(\int_{\Omega_{k}}f^{\gamma_{k+1}}\eta_k^\frac{2n}{n-2}\right)^{\frac{n-2}{n}}
 		\leq &2c_{9}t_{0}^3 e^{ t_{0}}V^{-\frac{2}{n}} 16^{k} \int_{\Omega_{k}} f^{\gamma_{k}}.
 	\end{align}
 	Taking both sides of \eqref{equ 4.25} by the power $\frac{1}{\gamma_{k}}$, it yields
 	\begin{align}\label{iteration formula}
 		\|f\|_{L^{\gamma_{k+1}}(\Omega_{k+1})}
 		\leq &\left(2c_{9}t_{0}^{3} e^{ t_{0}}V^{-\frac{2}{n}}\right)^{\frac{1}{\gamma_{k}}} 16 ^{\frac{k}{\gamma_{k}}}\|f\|_{L^{\gamma_{k}}(\Omega_{k})}.
 	\end{align}
 	We now perform standard Moser iteration using \eqref{iteration formula}. Since both series
 	\[ \sum_{k=1}^{\infty}\frac{1}{\gamma_{k}} \quad and \quad \sum_{k=1}^{\infty}\frac{k}{\gamma_{k}} \]
 	are convergent, it follows that
 	\begin{align}\label{iteration result}
 		\begin{split}
 			\|f\|_{L^{\infty}(B(o,R/2))}\leq &\left(2 c_{9}t_{0}^3 e^{ t_{0}} \right)^{\sum_{k=1}^{\infty}\frac{1}{\gamma_{k}}} 16 ^{\sum_{k=1}^{\infty}\frac{k}{\gamma_{k}}}  V^{-\frac{1}{\gamma}} \|f\|_{L^{\gamma}(B(o,3R/4))} \\ \triangleq& \;c_{10}V^{-\frac{1}{\gamma}} \|f\|_{L^{\gamma}(B(o,3R/4))}.
 		\end{split}			
 	\end{align}
 	Finally, we substitute \eqref{lpbpund} into \eqref{iteration result} and get
 	\begin{equation*}
 		\|f\|_{L^{\infty}(B(o,R/2))}\leq c_{8}c_{10}c_{0}^{2} \frac{(1+\sqrt{K}R)^2}{R^2}\triangleq c_{11}\frac{(1+\sqrt{K}R)^2}{R^2}. \qedhere
 	\end{equation*}
 \end{proof}
 		\section{Proof of the main theorem}	
 	\noindent{\bf Proof of \thmref{thm1.1}}
 	In \secref{standard section}, we have concluded that 
 	\begin{equation}\label{estiamte for gradient omega}
 		\sup_{B(o,\frac{R}{2})}|\nabla \omega|\le c_{12}\frac{1+\sqrt{K}R}{R},
 	\end{equation}
 	where the constant $c_{12}$ depends on $n,p,q,\gamma$.
 	Recall when $\gamma\neq 1/(p-1)$,
 	\[v=\frac{\gamma}{b}u^{b},\qquad \omega=-(p-1)\log v.\]
 	
 	Since $u$ is a positive solution to \eqref{main equ}, by combining Lemmas \ref{lem3.1}, \ref{lem3.2}, \ref{L Beta Bound} and \ref{L infinity bound}, we transfer $\omega$ into $u$ to obtain
 	$$\sup_{B(o,\frac{R}{2})} \frac{|\nabla u|}{u}\leq C\frac{(1+\sqrt K R)}{R}, $$
 	where $C=C(n,p,q,\gamma)$. This accomplishes the proof.\qed 
 	\vspace{0.5em}\\
 	\noindent{\bf Proof of \thmref{thm1.2}}\,
 	Actually, this proof is just a modification of the proof of \thmref{thm1.1}. Instead of \lemref{lem3.1}, we use
 	\lemref{lem3.5} here. \qed 

\vspace{0.5em}
\noindent{\bf Proof of \thmref{thm1.3}} 
 	Remember when $\gamma= 1/(p-1)$,
 	\[v=\frac{1}{p-1}\log u,\qquad \omega= v.\]
 	
 	A direct combination of Lemmas \ref{lem3.3}, \ref{lem3.4},
 	\ref{L Beta Bound} and \ref{L infinity bound} proves the conclusion. The remaining details follow the proof of \thmref{thm1.1} and are therefore omitted. \qed 
 	\vspace{0.5em}\\
 	\noindent{\bf Proof of \corref{cor1.4}} We always assume
 	$a\neq 0$ throughout this proof. A direct combination of \eqref{condition2 in thm1.3} and \eqref{parameter restricition3} yields it in the case $\gamma=1/(p-1)$. For $\gamma\neq 1/(p-1)$, by joining \eqref{parameter restriction1} and \eqref{parameter restriction2}, we have
 	\begin{equation}\label{equ 5.2}
 		a>0,\quad q\le \frac{n+1}{n-1}\gamma(p-1),
 	\end{equation}
 	and
 	\begin{equation}\label{equ 5.3}
 		a<0,\quad q\ge \frac{n+1}{n-1}\gamma(p-1).
 	\end{equation}
 	Since \eqref{condition2 in thm1.1} and \eqref{condition2 in thm1.2} are identical, combining \eqref{equ 5.2} and \eqref{condition2 in thm1.1} leads to 
 		\[ a>0 \quad \text{and} \quad q<\frac{n+3}{n-1}\gamma(p-1).\]
 		
 	Similarly, the union of \eqref{equ 5.3} and \eqref{condition2 in thm1.1} is
 	$a<0 \; \text{and}\; q>\gamma(p-1).$ This ends the whole proof. \qed 
 	\vspace{0.5em}\\
 	\noindent{\bf Proof of \corref{liouville}} We take $K=0$ in one of the above theorems and immediately obtain 
 	\begin{equation}\label{equ 5.4}
 	\sup_{B(o,R/2)} \frac{|\nabla u|}{u}\le \frac{C(n,p,q,\gamma)}{R}.
 	\end{equation}
 	This implies $|\nabla  u|=0$ if $R\to \infty$ in \eqref{equ 5.4}. Then $u$ is a constant and $\Delta_{p}(u^{\gamma})=0$. However, this contradicts to \eqref{main equ} since $u$ is positive. \qed 
 	\vspace{0.5em}\\
 	\noindent{\bf Proof of \corref{harnack}}\,
 	From the above theorems, for any $y\in B(o,R/2)$, we have
 	\begin{align}\label{equ 5.5}
 		|\nabla \log u(y)| 
 		\leq \frac{ C(n,p,q,\gamma)(1+\sqrt{K}R) }{R }.
 	\end{align}
 	Choose  a minimizing geodesic $\gamma(t)$ with arc length parameter connecting $o$ and $y$, i.e.
 		$$
 	\gamma:[0,d]\to M,\quad\gamma(0)=o, \quad \gamma(d)=x.
 	$$
 	Notice that $d=d(x,o)\le \frac{R}{2}$ is the geodesic distance, we know
 	\begin{align}\label{equ 5.6}
 		\log u(y)-\log u(o)=\int_0^d\frac{d}{dt}\log  u\circ\gamma(t)\,\mathrm{d}t.
 	\end{align}
 	Since
 	\begin{align}\label{equ 5.7}
 		\left|\frac{d}{dt}\log u\circ\gamma(t)\right|\leq |\nabla \log u||\gamma'(t)| \leq \frac{ C(n,p,q,\gamma)(1+\sqrt{K}R) }{R },
 	\end{align}
 	it infers from \eqref{equ 5.6} and \eqref{equ 5.7} that
 	\begin{equation*}
 		-C(n,p,q,\gamma)\frac{1+\sqrt{K}R}{2}\le \log \frac{u(x)}{u(o)}\le C(n,p,q,\gamma)\frac{1+\sqrt{K}R}{2}.
 	\end{equation*}
 	As a consequence, for any $y,z\in B(o,R/2)$, we have
 	\begin{equation*}
 		\log\frac{u(z)}{u(y)}\le C(n,p,q,\gamma)(1+\sqrt{K}R).
 	\end{equation*}
 	If we consider global solution $u$ on $M$, we firstly let $R\to \infty$ in \eqref{equ 5.2} and obtain that
 	$$
 	|\nabla \log u(y)|\leq C \sqrt{K}, \quad \forall\, y\in M.
 	$$
 	Fix  $y\in M$, it is known  that for any $z\in M$, we can choose a geodesic  $\gamma=\gamma(t)$ which minimizes the line between $y$ and $z$
 	$$
 	\gamma:[0,d]\to M,\quad\gamma(0)=y, \quad \gamma(d)=z,
 	$$
 	where $d=d(y,z)$ denotes the distance from $y$ to $z$.
 	There holds true
 	\begin{align}\label{equ 5.8}
 		\log u(z)-\log u(y)=\int_0^d\frac{d}{dt}\log u\circ\gamma(t)\,\mathrm{d}t.
 	\end{align}
 	Due to
 	\begin{align}\label{equ 5.9}
 		\left|\frac{d}{dt}\log u\circ\gamma(t)\right|\leq |\nabla \log u||\gamma'(t)| = C\sqrt{K},
 	\end{align}
 	it follows from \eqref{equ 5.8} and \eqref{equ 5.9} that
 	\begin{equation*}
 		\frac{u(z)}{u(y)}\le e^{C(n,p,q,\gamma)\sqrt{K}d(y,z)}.
 	\end{equation*}
    Thus we complete the proof. \qed \\
 
 \noindent{\bf The case $n=2$.} In the proof of above theorems,  we  used Sobolev embedding inequality \eqref{salof}, which requires the dimension $n>2$. As a necessary supplement, let us examine the case $n=2$. Instead of $\eqref{salof}$, we will use the following inequality for any fixed $\hat{n}>2$ and $f\in C^{\infty}_{0}(B)$
 \begin{equation}\label{two dim inequ}
 	 \|f\|_{L^{\frac{2\hat{n}}{\hat{n}-2}}}^2\leq e^{C_{\hat{n}}(1+\sqrt{K}R)}\,V^{-\frac{2}{\hat{n}}}R^2\left(\int|\nabla f|^2+R^{-2}f^2\right).
 \end{equation}
 We choose $\hat{n}=3$ without loss of generality. Then \eqref{two dim inequ} becomes
 \begin{equation}\nonumber
 	 \|f^{\frac{\alpha_{0}+t-1}{2}+\frac{p}{4} }\eta \|_{L^{6}(B)}^2\leq e^{C(1+\sqrt{K}R)}\,V^{-\frac{2}{\hat{3}}}R^2\left(\int\left|\nabla \left(f^{\frac{\alpha_{0}+t-1}{2}+\frac{p}{4} }\eta\right) \right|^2+R^{-2}\int f^{\alpha_{0}+t+\frac{p}{2}-1 }\eta ^2\right),
 \end{equation}
 where $f$ is replaced by $f^{\frac{\alpha_{0}+t-1}{2}+\frac{p}{4} }\eta$ as in \eqref{equ 4.12}.
 
 \noindent Proceeding in a similar manner, we infer from \eqref{equ 4.14} and the above formula  that
 \begin{align}\label{another1}
 	\begin{split}
 		& \sigma \int_{\Omega} f^{\alpha_{0}+\frac{p}{2}+t}\eta^2
 		+
 		\frac{c_{3}}{t }e^{-C(1+\sqrt{K}R)}V^{\frac{2}{3}}R^{-2}\left\|f^{\frac{\alpha_{0}+t-1}{2}+\frac{p}{4} }\eta\right\|_{L^{6}(\Omega)}^{2}\\
 		&\leq \, 
 		\left(2(n-1)K+\frac{c_{3}}{tR^{2}} \right) \int_{\Omega} f^{\alpha_{0}+t+\frac{p}{2}-1}\eta^2+\frac{c_{4}}{ t }\int_{\Omega} f^{\alpha_{0}+t+\frac{p}{2}-1}|\nabla\eta|^2.
 	\end{split}
 \end{align}
A similar treatment leads to the following integral inequality
 \begin{align}\label{another2}
	\begin{split}
		&\sigma \int_{\Omega} f^{\alpha_{0}+\frac{p}{2}+t}\eta^2
		+
		\frac{c_3}{ t }e^{-t_{0}}V^{\frac{2}{3}}R^{-2}\left\|f^{\frac{\alpha_{0}+t-1}{2}+\frac{p}{4} }\eta\right\|_{L^{6}(\Omega)}^{2}\\
		\leq & \,
		c_{5}t_{0}^2R^{-2} \int_{\Omega} f^{\alpha_{0}+\frac{p}{2}+t-1}\eta^2+\frac{c_{4}}{t }\int_{\Omega} f^{\alpha_{0}+\frac{p}{2}+t-1}|\nabla\eta|^2.
	\end{split}
\end{align}
By repeating the previous procedure in \secref{section4.2}, we easily obtain the corresponding $L^{\gamma}$ bound estimate of $f$, i.e.
\begin{equation}\label{another3}
		\|f \|_{L^{\gamma}(B(o,3R/4))}\leq c_8V^{\frac{1}{\gamma}} \frac{t_{0}^2}{ R^2},
\end{equation}
where $\gamma=3\left(\alpha_{0}+t_{0}+p/2-1\right)$.

We take $\gamma_{1}=\gamma,\gamma_{k+1}=3\gamma_{k}$ and $\Omega_{k}$ defined as above when iteration is performed.
Carrying out Nash-Moser iteration, we obtain
\begin{equation}\label{another4}
		\|f\|_{L^{\infty}(B(o,R/2))}\leq c_{10}V^{-\frac{1}{\gamma}} \|f\|_{L^{\gamma}(B(o,3R/4))}\,.
\end{equation}

The gradient estimate is a direct combination of $\eqref{another3}$ and \eqref{another4}. We can also prove Harnack's inequality and Liouville type results similarly and the details will be omitted here.
\section{Spceial Case: $a=0$ }
	This section is further contributed to investigating the homogeneous equation
	\begin{equation}\label{special main equ}
		\Delta_{p}(u^{\gamma})=0.
	\end{equation}
The local gradient estimate for $v$, defined by \eqref{equ 2.1}, is addressed in the following analysis.
	Making nonlinear ``pressure'' transformation as well, equation \eqref{special main equ} is turned into
	\begin{align}
		&\Delta_{p}v+b^{-1}v^{-1}|\nabla v|^{p}=0, &b\neq 0;\label{equation1 for special v} \\
		&\Delta_{p}v+(p-1)|\nabla v|^{p}=0,& b=0.\label{equation2 for special v} 
	\end{align}
	
	 Despite having obtained stronger conclusions through Theorems \ref{thm1.1}, \ref{thm1.2} and \ref{thm1.3}, we deliberately focus on this special case because equations \eqref{equation1 for special v} and \eqref{equation2 for special v} share identical structural properties --\, a critical alignment formalized in \thmref{thmB}.  Let us consider what we can gain from this theorem. For $b\neq 0$, setting $\beta=b^{-1},q=p$ and $r=-1$ in \thmref{thmB}, a straightforward calculation yields
	\begin{gather}
			\beta\left(\frac{n+1}{n-1}-\frac{q+r}{p-1}\right)=\frac{2}{b(n-1)},\\
	\intertext{and}
			\frac{q+r}{p-1}\equiv 1, \quad\forall \beta\in \BR.
	\end{gather}
	For $b=0$, denoting $\beta=(p-1), r=0$ and $q=p$, a similar calculation shows that
	\begin{gather}
		\beta\left(\frac{n+1}{n-1}-\frac{q+r}{p-1}\right)=(p-1)\left(\frac{n+1}{n-1}-\frac{p}{p-1}\right),\\
		\intertext{and}
		\frac{q+r}{p-1}=\frac{p}{p-1}, \quad\forall \beta\in \BR.
	\end{gather}
	A direct application of \thmref{thmB} implies the following result.
	\begin{thm}\label{thm6.1}
		Let $(M^{n},g)$ be a  complete Riemannian manifold with $Ric(M)\ge -(n-1)K g$ for some $K\ge 0$. Assume $u$
		is a positive solution to \eqref{special main equ}. Then we have the following statements.
		\begin{enumerate} [$(I)$.]
		\item  If $\gamma>\frac{1}{p-1}$, then $v$ satisfies local gradient estimate \eqref{CY estimate for p-laplace} with $C$ depending on $n,p,\gamma$ instead;
		\item If  $\gamma=\frac{1}{p-1}$, with additional assumptions $u>1$ and $p>\frac{n+3}{4}>1$, estimate in (I) still holds for $v$.
	\end{enumerate}
	\end{thm}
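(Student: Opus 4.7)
The plan is to reduce Theorem 6.1 to a direct application of Theorem B. Thanks to the pressure transformation in Section 2.2, in the case $a=0$ the function $v$ already satisfies either \eqref{equation1 for special v} or \eqref{equation2 for special v}, both of which are instances of the model equation \eqref{eilliptic 1} with specific choices of $\beta$, $q$, and $r$. So the bulk of the work is just verifying that the hypotheses of Theorem B are fulfilled in each of the two cases, and checking that $v$ is a positive solution so that Theorem B is applicable at all.

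For part (I), where $\gamma>\frac{1}{p-1}$ (equivalently $b>0$), the change of variable gives $v=\frac{\gamma}{b}u^{b}$, which is automatically positive since $u>0$. Equation \eqref{equation1 for special v} matches \eqref{eilliptic 1} with $\beta=b^{-1}$, $r=-1$, $q=p$. Using the identity already computed just before the statement of the theorem,
\begin{equation*}
\beta\left(\frac{n+1}{n-1}-\frac{q+r}{p-1}\right)=\frac{2}{b(n-1)}>0,
\end{equation*}
so the first alternative of Theorem B holds. Invoking Theorem B on $v$ on the geodesic ball $B(o,R)$ produces exactly \eqref{CY estimate for p-laplace} for $v$, with the constant depending on $n$, $p$, and (through $b$) on $\gamma$.

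For part (II), where $\gamma=\frac{1}{p-1}$ (i.e., $b=0$), we have $v=\frac{1}{p-1}\log u$, so the hypothesis $u>1$ is exactly what is needed to guarantee $v>0$ and legitimize the application of Theorem B. Equation \eqref{equation2 for special v} takes the form of \eqref{eilliptic 1} with $\beta=p-1$, $r=0$, $q=p$. Here the first alternative of Theorem B is unhelpful for general $p$, so we appeal to the second alternative $1<(q+r)/(p-1)<(n+3)/(n-1)$. Since $(q+r)/(p-1)=p/(p-1)>1$ automatically, the only substantive requirement is $p/(p-1)<(n+3)/(n-1)$, which a short algebraic rearrangement shows is equivalent to $p>(n+3)/4$. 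That is precisely the extra assumption imposed in (II), so Theorem B applies and the estimate \eqref{CY estimate for p-laplace} for $v$ follows.

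There is essentially no obstacle here; the conceptual content is the observation that equations \eqref{equation1 for special v} and \eqref{equation2 for special v} are covered by the framework of Theorem B. The only subtlety worth flagging is the positivity of $v$ in case (II), which fails in general and motivates the assumption $u>1$. The restriction $p>(n+3)/4$ in (II) is sharp with respect to the second alternative of Theorem B, because the first alternative only gives $p\ge (n+1)/2$ and the second alternative is the only one that works for a generic range of $p$.
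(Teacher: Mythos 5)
Your proposal is correct and follows essentially the same route as the paper: identify \eqref{equation1 for special v} and \eqref{equation2 for special v} as instances of \eqref{eilliptic 1} with $(\beta,q,r)=(b^{-1},p,-1)$ and $(p-1,p,0)$ respectively, check the alternatives of Theorem B (the first gives case (I) since $b>0$, the second gives case (II) since $p/(p-1)<(n+3)/(n-1)$ is equivalent to $p>(n+3)/4$), and use $u>1$ to secure positivity of $v$ when $b=0$. This matches the paper's argument, which is exactly this parameter verification followed by a direct application of Theorem B.
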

		\begin{rmk}
		Since $v$ still keeps positive for $\gamma>\frac{1}{p-1}$, henceforth none of assumption on the lower bound of $u$ is required. The equivalence \[\frac{|\nabla v|}{v}=\frac{b|\nabla u|}{u}.\] implies that u inherits a variant of \eqref{CY estimate for p-laplace} under the condition $\gamma>\frac{1}{p-1}$.
	\end{rmk}
	\begin{rmk}
		When $p>2$, the result in (I) generalizes the range $\gamma\ge 1$ that is derived directly by applying \thmref{thmB} to \eqref{special main equ}, while it fails in the case $1<p\le 2$. In addition, compared with known conclusion for $\gamma=1${\rm\cite{WZ}}, our result can only hold for $p>2$, which suggests it is not sharp.
	\end{rmk}
	\begin{rmk}
		By using \thmref{thmB} to \eqref{special main equ}  for $\gamma=1/(p-1)$, $p$ should be restricted in the range $1<p\le 2$. So the result in (II) behaves better for some large $p$.
		However, there is a constraint on the lower bound of the solution $u$.
	\end{rmk}
	
	It is unfortunate  that \thmref{thm6.1} contains no case about $b<0$. Note that $bv>0$ holds now and \eqref{equation1 for special v} 
	can be rewritten as follows
	\begin{equation}\label{new 6.2}
		bv\Delta_{p}v+|\nabla v|^{p}=0.
	\end{equation}
	Surprisingly,  the following Caccioppoli type inequality holds for $b<0$.
	\begin{thm}\label{Caccioppoli}\label{thm6.2}
		If $v$ is a (weak) solution of \eqref{new 6.2} with $b<0$, then
		\begin{equation}\label{equ 6.9}
			\int_{M} |\nabla v|^{p}\eta^{p}\le C(p,b)\int_{M} |v|^{p}|\nabla \eta|^{p}
		\end{equation}
	holds for any $\eta\in C^{\infty}_{0}(M)$, where $C(p,b)=\left(2p|b|/(1-b)\right)^{p}>0$. In particular, on some geodesic ball $B_{2R}\in M$, we have
		\begin{equation}\label{estiamte6.10}
			\int_{B_{R}} |\nabla v|^{p}\le C(n,p,b)R^{-p}\int_{B_{2R}} |v|^{p}.
		\end{equation}
	\end{thm}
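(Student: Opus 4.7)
\textbf{Proof Plan for Theorem \ref{thm6.2}.}
The plan is to test the weak form of $bv\Delta_p v + |\nabla v|^p = 0$ against a function of the shape $v\eta^p$, integrate by parts to convert $v\Delta_p v$ into $|\nabla v|^p$ plus a boundary-like term, and then absorb one copy of $|\nabla v|^p\eta^p$ into the left-hand side. The sign condition $b<0$ is precisely what lets this absorption go through.

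More concretely, rewrite the equation as $|\nabla v|^p = -bv\Delta_p v$, multiply by $\eta^p$, integrate, and use the weak definition of $\Delta_p v$ with the admissible test function $v\eta^p \in C^\infty_0(M)$. Integration by parts yields
\begin{equation*}
\int_M v\Delta_p v\,\eta^p = -\int_M |\nabla v|^p \eta^p - p\int_M v\,\eta^{p-1}|\nabla v|^{p-2}\langle \nabla v,\nabla \eta\rangle.
\end{equation*}
Substituting back gives
\begin{equation*}
(1-b)\int_M |\nabla v|^p\eta^p = bp\int_M v\,\eta^{p-1}|\nabla v|^{p-2}\langle \nabla v,\nabla\eta\rangle.
\end{equation*}
Here is the crucial point: since $b<0$, the coefficient $1-b>0$, so one can divide and take absolute values. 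After applying H\"older's inequality (or a weighted Young's inequality) to
\begin{equation*}
\int_M |v|\,\eta^{p-1}|\nabla v|^{p-1}|\nabla \eta| \le \Bigl(\int_M |\nabla v|^p\eta^p\Bigr)^{(p-1)/p}\Bigl(\int_M |v|^p|\nabla\eta|^p\Bigr)^{1/p},
\end{equation*}
a standard rearrangement yields the desired estimate \eqref{equ 6.9} with the stated constant $C(p,b)=\bigl(2p|b|/(1-b)\bigr)^p$ (or a cleaner $\bigl(p|b|/(1-b)\bigr)^p$ by H\"older alone; the factor $2$ comes from an $\epsilon$-Young absorption).

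For the local estimate \eqref{estiamte6.10}, I would choose a standard cutoff $\eta\in C^\infty_0(B_{2R})$ with $\eta\equiv 1$ on $B_R$ and $|\nabla \eta|\le C(n)/R$, then substitute into \eqref{equ 6.9}. The resulting constant depends on $n$ through the choice of $\eta$, which accounts for $C(n,p,b)$ in the statement. The main subtlety in the whole argument is really bookkeeping: one must justify that $v\eta^p$ is an admissible test function (this follows from $v\in C^1$ and compact support of $\eta$) and track the sign carefully so that $b<0$ is used exactly where needed --- if instead $b>0$ were in effect, the coefficient $1-b$ could vanish or change sign and the Caccioppoli bound would fail, consistent with the remark that this approach is peculiar to $b<0$.
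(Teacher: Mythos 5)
Your proposal is correct and follows essentially the same route as the paper: test the equation $bv\Delta_p v + |\nabla v|^p = 0$ against $v\eta^p$ (equivalently, multiply by $\eta^p$ and integrate by parts), use $b<0$ so that $1-b>0$ keeps the $|\nabla v|^p\eta^p$ term on the left with a positive coefficient, and absorb the cross term via H\"older/Young; the paper happens to use an $\epsilon$-Young split (hence the factor $2$ in the constant), while your plain H\"older version gives the cleaner $\bigl(p|b|/(1-b)\bigr)^p$, as you note. The local estimate follows by the identical cutoff argument in both.
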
 
	\begin{proof}
		Multiplying both sides of \eqref{new 6.2} by $\eta^{p}$ and integrating on $M$, we get
		\begin{equation*}
			\int_{M} b\Delta_{p}v\cdot v\eta^{p}+|\nabla v|^{p}\eta^{p}=0.
		\end{equation*}
	    Integrate by parts to show
		\begin{equation}
			(1-b)\int_{M} |\nabla v|^{p}\eta^{p}-bp\int_{M} v\eta^{p-1}|\nabla v|^{p-2}\nabla v\cdot \nabla\eta=0.
		\end{equation}
		
		Due to $b<0$ and $bv>0$, we use H\"{o}lder's inequality with coefficients $p$ and $p/(p-1)$ to get
		\begin{align}\label{equ 6.12}
			\begin{split}
			(1-b)\int_{M} |\nabla v|^{p}\eta^{p}&\le p|b|\int_{M}|v|\eta^{p-1}|\nabla v|^{p-1}|\nabla\eta|\\
			&=p|b|\int_{M} \varepsilon^{\frac{1}{p}}|\nabla v|^{p-1}\eta^{p-1}\cdot \varepsilon^{-\frac{1}{p}}|v||\nabla \eta| \\
			&\le \varepsilon^{\frac{1}{p-1}}\int_{M} |\nabla v|^{p}\eta^{\,p}+\frac{p^{\,p}|b|^{p}}{\varepsilon}\int_{M}|v|^{p}|\nabla\eta|^{p}.
			\end{split}
		\end{align}
		Choose $\varepsilon$ such that $\varepsilon^{\frac{1}{p-1}}=\frac{1-b}{2}$, after some simplification \eqref{equ 6.12} becomes
			\begin{equation*}
			\int_{M} |\nabla v|^{p}\eta^{\,p}\le \left(\frac{2p|b|}{1-b}\right)^{p}\int_{M} |v|^{p}|\nabla \eta|^{p}.
		\end{equation*}
		
		This directly proves \eqref{equ 6.9}. From now on we choose some explicit $\eta\in C^{\infty}_{0}(B_{2R})$ as follows,
		\[\begin{cases}
		0\leq\eta\leq 1,\quad \eta\equiv 1\text{ in }  B_{R},\\
		|\nabla\eta|\leq\frac{C(n)}{R} \;\; \text{in}\; B_{2R}.
		\end{cases}\]
		Substitute $\eta$ into the above estimate, it yields
		\begin{equation*}
			\int_{B_{R}} |\nabla v|^{p}\le C(n,p,b)R^{-p}\int_{B_{2R}\setminus B_{R}} |v|^{p}.
		\end{equation*}
		This gives \eqref{estiamte6.10}.
	\end{proof}
	Moreover, we have such a corollary as a direct consequence of \eqref{estiamte6.10}.
	\begin{cor}\label{cor6.3}
		Same conditions and notations as in \thmref{thm6.2}, if $v\in L^{p}_{loc}(M)$ with $p>n$, where $n$ is the dimension of $M$. Then any solution $u$ of \eqref{special main equ} on $M$ is indeed a constant.
		Similar result also holds if conditions are replaced by $Ric(M)\ge 0$, $p>n$ and $v$ is bounded from above by some  constant $A>0$.
	\end{cor}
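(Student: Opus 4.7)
The plan is to apply the Caccioppoli inequality \eqref{estiamte6.10} of Theorem 6.2 and pass to the limit $R\to\infty$. The key observation is that $R\mapsto \int_{B_R}|\nabla v|^p$ is monotone nondecreasing in $R$, so if I can show the right-hand side $C(n,p,b)\,R^{-p}\int_{B_{2R}}|v|^p$ tends to $0$ along a sequence $R_k\to\infty$, then $\int_{B_{R_0}}|\nabla v|^p=0$ for every fixed $R_0>0$, forcing $\nabla v\equiv 0$ on $M$ and hence $v$ constant. Since the pressure transformation $v=-(\gamma/b)u^b$ with $b\neq 0$ is a homeomorphism between positive $u$ and positive $v$, $u$ then has to be a positive constant as well.

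For the first assertion, I would read the hypothesis in the strong sense $v\in L^p(M)$ globally (this is the setting under which \eqref{estiamte6.10} is actually useful when $R\to\infty$). Then $\int_{B_{2R}}|v|^p\le \|v\|_{L^p(M)}^p<\infty$ uniformly in $R$, so \eqref{estiamte6.10} gives
\[
\int_{B_R}|\nabla v|^p \;\le\; C(n,p,b)\,R^{-p}\,\|v\|_{L^p(M)}^p \;\longrightarrow\;0\qquad (R\to\infty),
\]
and the monotonicity argument above finishes the proof.

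For the second assertion, I would invoke Bishop--Gromov volume comparison under $Ric(M)\ge 0$ to get the polynomial volume bound $\mathrm{Vol}(B_{2R})\le C_n R^n$ for a dimensional constant $C_n$. Combined with the uniform bound $|v|\le A$, this yields $\int_{B_{2R}}|v|^p\le A^p C_n (2R)^n$, and plugging into \eqref{estiamte6.10} gives
\[
\int_{B_R}|\nabla v|^p \;\le\; C(n,p,b)\,A^p\,2^n C_n\,R^{\,n-p}.
\]
Because $p>n$, the exponent $n-p$ is strictly negative and the right-hand side tends to $0$ as $R\to\infty$; the same monotonicity argument again forces $\nabla v\equiv 0$, so $v$ and hence $u$ are constant.

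The only delicate point is dimensional: the hypothesis $p>n$ in part~(ii) is precisely what converts the polynomial volume growth $R^n$ from Bishop--Gromov into decay against the $R^{-p}$ factor coming from \eqref{estiamte6.10}, and the borderline $p=n$ would yield only a uniform bound on $\int_{B_R}|\nabla v|^p$ rather than decay. The remaining steps are routine: passing from $v\equiv\mathrm{const}$ to $u\equiv\mathrm{const}$ is immediate since $b\neq 0$ makes the pressure map invertible on positive functions.
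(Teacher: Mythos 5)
Your proposal is correct and follows essentially the same route as the paper: both assertions rest on the Caccioppoli inequality \eqref{estiamte6.10}, with volume comparison under $Ric\ge 0$ plus the bound on $v$ giving $\int_{B_R}|\nabla v|^p\le C(n,p,b,A)R^{n-p}\to 0$ in the second case, and constancy of $v$ (hence of $u$, the pressure transformation being invertible) obtained by letting $R\to\infty$. The only divergence is in the first assertion, where the paper keeps the hypothesis $v\in L^p_{loc}(M)$ and writes the bound $\int_{B_R}|\nabla v|^p\le C R^{n-p}\int_{B_1}|v|^p$ (so that $p>n$ is what forces decay), while you read the hypothesis as global $L^p$ integrability, under which the factor $R^{-p}$ alone already gives decay; your reading is the one that makes the limiting step immediate.
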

	\begin{proof}
		From $v\in L^{p}_{loc}(M)$ and \eqref{estiamte6.10}, we have
		\begin{equation}\label{equ 6.13}
			\int_{B_{R}} |\nabla v|^{p}\le C(n,p,b)R^{n-p}\int_{B_{1}}|v|^{p}.
		\end{equation}
		
		Since the integral in the right side of \eqref{equ 6.13} is finite, by letting $R\to \infty$ we know $|\nabla v|\equiv 0$. Thus $|\nabla u|\equiv 0$. Similarly, from standard volume comparison theorem and $v\le A$, \eqref{estiamte6.10} becomes
			\begin{equation}\label{equ 6.14}
			\int_{B_{R}} |\nabla v|^{p}\le C(n,p,b,A)R^{n-p}.
		\end{equation}
		The the whole proof is complete by letting $R\to \infty$ in \eqref{equ 6.14} as well.
	\end{proof}
	The following theorem complements the case $\gamma<\frac{1}{p-1}$.
\begin{thm}\label{thm6.4}
	Let $(M^{n},g)$ be a  complete Riemannian manifold with $Ric(M)\ge -(n-1)K g$ for some $K\ge 0$. Assume $u$
	is a positive solution to \eqref{special main equ} with $0<\gamma<\frac{1}{p-1}$. If $u<\Lambda$ and $1<p\le \frac{n+1}{2}$ or $p>\frac{n+3}{4}$, where
	$$\Lambda=\left(-\frac{\gamma}{b}\right)^{-\frac{1}{b}},$$
	then the following estimate holds
		$$
	\sup_{B(o,\frac{R}{2})} \frac{|\nabla \log(-v)|}{\log(-v)}\leq C(n,p,\gamma)\frac{(1+\sqrt K R)}{R}.
	$$
\end{thm}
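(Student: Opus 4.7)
The strategy is to convert \eqref{equation1 for special v} into an equation of the form $\Delta_p w + \beta|\nabla w|^p = 0$ via a logarithmic transformation, then invoke Theorem \ref{thmB} with $r = 0$ and $q = p$.

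Since $0 < \gamma < 1/(p-1)$, we have $b < 0$, and the pressure $v$ from \eqref{equ 2.1} is positive; set $w := \log(-v)$ (interpreted as $\log|v|$ in the positive-$v$ convention). The hypothesis $u < \Lambda = (-\gamma/b)^{-1/b}$ translates, via raising to the negative power $b$, into $u^b > -b/\gamma$, hence $|v| = (\gamma/|b|)\,u^b > 1$. In particular $w > 0$ on the domain of interest.

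Next I derive the PDE for $w$. Substituting $|v| = e^w$ into \eqref{equation1 for special v} and using the identity $\Delta_p(e^w) = e^{(p-1)w}\bigl(\Delta_p w + (p-1)|\nabla w|^p\bigr)$, the factor $e^{(p-1)w}$ cancels and the equation reduces to
\begin{equation*}
\Delta_p w + \beta\,|\nabla w|^p = 0, \qquad \beta = (p-1) + \frac{1}{b} = \frac{\gamma(p-1)^2}{\gamma(p-1) - 1}.
\end{equation*}
Because $\gamma(p-1) < 1$, $\beta$ is strictly negative.

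At this stage Theorem \ref{thmB} applies with $q = p$, $r = 0$, and the coefficient $\beta$ as above. A short arithmetic yields $\frac{n+1}{n-1} - \frac{p}{p-1} = \frac{2p - n - 1}{(n-1)(p-1)}$, so the first alternative in Theorem \ref{thmB}, $\beta\bigl(\frac{n+1}{n-1} - \frac{p}{p-1}\bigr) \geq 0$, is (using $\beta < 0$) equivalent to $p \leq (n+1)/2$, while the second alternative $1 < p/(p-1) < (n+3)/(n-1)$ simplifies to $p > (n+3)/4$. These are precisely the two hypotheses on $p$ in the present theorem; at least one therefore holds, and applying Theorem \ref{thmB} to the positive solution $w$ gives
\begin{equation*}
\sup_{B(o, R/2)} \frac{|\nabla w|}{w} \leq C(n, p, \gamma)\,\frac{1 + \sqrt{K}R}{R},
\end{equation*}
which is the claimed bound on $|\nabla \log(-v)|/\log(-v)$.

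The only substantive computation is deriving the transformed equation and its coefficient $\beta$; matching the resulting condition $\beta < 0$ against the two alternatives in Theorem \ref{thmB} is then an elementary check. I expect the main potential pitfall to be keeping signs straight when $b < 0$ --- in particular reconciling the sign conventions for $v$ in \eqref{equ 2.1} with the statement $\log(-v)$ appearing in the theorem, and verifying that the coefficient $\beta$ is indeed negative so that the direction of the inequality in the first alternative of Theorem \ref{thmB} flips correctly.
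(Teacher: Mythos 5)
Your proposal is correct and follows essentially the same route as the paper: rewrite the equation for $-v$ via a logarithmic change of variable into $\Delta_p w+\beta|\nabla w|^p=0$ with $\beta<0$, note $u<\Lambda$ gives $w=\log(-v)>0$, and apply Theorem~\ref{thmB} with $q=p$, $r=0$, the two alternatives reducing exactly to $p\le\frac{n+1}{2}$ and $p>\frac{n+3}{4}$. The only difference is the normalization of the logarithm (your coefficient $(p-1)+\tfrac1b$ versus the paper's $1+\tfrac{1}{b(p-1)}$), which is immaterial since only the sign of the coefficient enters and the quotient $|\nabla w|/w$ is invariant under rescaling $w$.
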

\begin{proof}
	Notice that $v<0$ in \eqref{equation1 for special v},  we set $\tilde{v}=-v$ and $\tilde{v}$ is a positive solution of
	\begin{equation*}
		\Delta_{p}\tilde{v}+b\tilde{v}^{-1}|\nabla \tilde{v}|^{p}=0.
	\end{equation*}
	Then let $\omega=\frac{1}{p-1}\log \tilde{v}$ and $\omega$ satisfies
	\begin{equation}\label{equ 6.15}
		\Delta_{p}\omega+\left(1+\frac{1}{b(p-1)}\right)|\nabla \omega|^{p}=0.
	\end{equation}
	Since $0<\gamma<\frac{1}{p-1}$ implies that
	\begin{equation*}
		1+\frac{1}{b(p-1)}<0
	\end{equation*}
	and $u<\Lambda$ suggests $v<-1$, consequently $\omega$ is a positive solution of \eqref{equ 6.15}. In order to apply \thmref{thmB} again, we need to ensure 
	\begin{gather}
		\frac{n+1}{n-1}-\frac{p}{p-1}\le 0 \label{equ 6.16}
	\intertext{or}
	\frac{p}{p-1}<\frac{n+3}{n-1}.   \label{equ 6.17}
	\end{gather}
	Because \eqref{equ 6.16} implies $1<p\le \frac{p+1}{2}$ and \eqref{equ 6.17} equals to $p>\frac{n+3}{4}$. From \thmref{thmB}, $\omega$ satisfies the local gradient estimate
			$$
	\sup_{B(o,\frac{R}{2})} \frac{|\nabla \omega|}{\omega}\leq C(n,p,\gamma)\frac{(1+\sqrt K R)}{R}.
	$$
	Back to $v$, this ends the proof.
\end{proof}

\section*{Acknowledgments}{ This work was partially supported by the National Natural Science Foundation of China (No. 12271423) and the Shaanxi Fundamental Science Research Project for Mathematics and Physics (No. 23JSY026).}

	\bibliographystyle{amsplain}
	\bibliography{reference}

\providecommand{\bysame}{\leavevmode\hbox to3em{\hrulefill}\thinspace}
\providecommand{\MR}{\relax\ifhmode\unskip\space\fi MR }
% \MRhref is called by the amsart/book/proc definition of \MR.
\providecommand{\MRhref}[2]{%
  \href{http://www.ams.org/mathscinet-getitem?mr=#1}{#2}
}
\providecommand{\href}[2]{#2}
\begin{thebibliography}{10}

\bibitem{AA}
Abimbola Abolarinwa, \emph{Gradient estimates for a weighted nonlinear elliptic
  equation and {L}iouville type theorems}, J. Geom. Phys. \textbf{155} (2020),
  103737, 9. \MR{4104483}

\bibitem{AB}
Donald~G. Aronson and Philippe B\'enilan, \emph{R\'egularit\'e{} des solutions
  de l'\'equation des milieux poreux dans {${\bf R}\sp{N}$}}, C. R. Acad. Sci.
  Paris S\'er. A-B \textbf{288} (1979), no.~2, A103--A105. \MR{524760}

\bibitem{BHLLMY}
Frank Bauer, Paul Horn, Yong Lin, Gabor Lippner, Dan Mangoubi, and Shing-Tung
  Yau, \emph{Li-{Y}au inequality on graphs}, J. Differential Geom. \textbf{99}
  (2015), no.~3, 359--405. \MR{3316971}

\bibitem{BMGL}
Marie-Fran\c{c}oise Bidaut-V\'{e}ron, Marta Garc\'{\i}a-Huidobro, and Laurent
  V\'{e}ron, \emph{Estimates of solutions of elliptic equations with a source
  reaction term involving the product of the function and its gradient}, Duke
  Math. J. \textbf{168} (2019), no.~8, 1487--1537. \MR{3959864}

\bibitem{CX}
Daguang Chen and Changwei Xiong, \emph{Gradient estimates for doubly nonlinear
  diffusion equations}, Nonlinear Anal. \textbf{112} (2015), 156--164.
  \MR{3274290}

\bibitem{CY}
S.~Y. Cheng and S.~T. Yau, \emph{Differential equations on {R}iemannian
  manifolds and their geometric applications}, Comm. Pure Appl. Math.
  \textbf{28} (1975), no.~3, 333--354. \MR{385749}

\bibitem{CLS}
Siu~Yuen Cheng, Peter Li, and Shing~Tung Yau, \emph{On the upper estimate of
  the heat kernel of a complete {R}iemannian manifold}, Amer. J. Math.
  \textbf{103} (1981), no.~5, 1021--1063. \MR{630777}

\bibitem{CLN}
Bennett Chow, Peng Lu, and Lei Ni, \emph{Hamilton's {R}icci flow}, Graduate
  Studies in Mathematics, vol.~77, American Mathematical Society, Providence,
  RI; Science Press Beijing, New York, 2006. \MR{2274812}

\bibitem{DEB}
E.~B. Davies, \emph{Heat {K}ernels and {S}pectral {T}heory}, Cambridge Tracts
  in Mathematics, vol.~92, Cambridge University Press, Cambridge, 1989.
  \MR{990239}

\bibitem{F}
Roberta Filippucci, \emph{Nonexistence of positive weak solutions of elliptic
  inequalities}, Nonlinear Anal. \textbf{70} (2009), no.~8, 2903--2916.
  \MR{2509378}

\bibitem{FPS}
Roberta Filippucci, Patrizia Pucci, and Philippe Souplet, \emph{A
  {L}iouville-type theorem for an elliptic equation with superquadratic growth
  in the gradient}, Adv. Nonlinear Stud. \textbf{20} (2020), no.~2, 245--251.
  \MR{4095468}

\bibitem{GS}
B.~Gidas and J.~Spruck, \emph{Global and local behavior of positive solutions
  of nonlinear elliptic equations}, Comm. Pure Appl. Math. \textbf{34} (1981),
  no.~4, 525--598. \MR{615628}

\bibitem{HRS}
Richard~S. Hamilton, \emph{A matrix {H}arnack estimate for the heat equation},
  Comm. Anal. Geom. \textbf{1} (1993), no.~1, 113--126. \MR{1230276}

\bibitem{he2023nashmoser}
Jie He, Jingchen Hu, and Youde Wang, \emph{Nash-{M}oser iteration approach to
  gradient estimate and liouville property of quasilinear elliptic equations on
  complete {R}iemannian manifolds}, 2023, \em arXiv:2311.02568\em.

\bibitem{HWW}
Jie He, Youde Wang, and Guodong Wei, \emph{Gradient estimate for solutions of
  the equation {$\Delta_pv +av^q=0$} on a complete {R}iemannian manifold},
  Math. Z. \textbf{306} (2024), no.~3, Paper No. 42. \MR{4703505}

\bibitem{HGG}
Guangyue Huang, Qi~Guo, and Lujun Guo, \emph{Gradient estimates for positive
  weak solution to {$\Delta_pu+au^\sigma=0$} on {R}iemannian manifolds}, J.
  Math. Anal. Appl. \textbf{533} (2024), no.~2, Paper No. 128007, 16.
  \MR{4676651}

\bibitem{HM}
Guangyue Huang and Bingqing Ma, \emph{Hamilton's gradient estimates of porous
  medium and fast diffusion equations}, Geom. Dedicata \textbf{188} (2017),
  1--16. \MR{3639621}

\bibitem{KN}
Brett Kotschwar and Lei Ni, \emph{Local gradient estimates of {$p$}-harmonic
  functions, {$1/H$}-flow, and an entropy formula}, Ann. Sci. \'{E}c. Norm.
  Sup\'{e}r. (4) \textbf{42} (2009), no.~1, 1--36. \MR{2518892}

\bibitem{LX}
Junfang Li and Xiangjin Xu, \emph{Differential {H}arnack inequalities on
  {R}iemannian manifolds {I}: linear heat equation}, Adv. Math. \textbf{226}
  (2011), no.~5, 4456--4491. \MR{2770456}

\bibitem{LY2}
Peter Li and Shing~Tung Yau, \emph{Estimates of eigenvalues of a compact
  {R}iemannian manifold}, Geometry of the {L}aplace operator ({P}roc. {S}ympos.
  {P}ure {M}ath., {U}niv. {H}awaii, {H}onolulu, {H}awaii, 1979), Proc. Sympos.
  Pure Math., vol.~36, Amer. Math. Soc., Providence, RI, 1980, pp.~205--239.
  \MR{573435}

\bibitem{LY1}
Peter Li and Shing-Tung Yau, \emph{On the parabolic kernel of the
  {S}chr\"{o}dinger operator}, Acta Math. \textbf{156} (1986), no.~3-4,
  153--201. \MR{834612}

\bibitem{LPNVV}
Peng Lu, Lei Ni, Juan-Luis V\'azquez, and C\'edric Villani, \emph{Local
  {A}ronson-{B}\'enilan estimates and entropy formulae for porous medium and
  fast diffusion equations on manifolds}, J. Math. Pures Appl. (9) \textbf{91}
  (2009), no.~1, 1--19. \MR{2487898}

\bibitem{MHL}
Bingqing Ma, Guangyue Huang, and Yong Luo, \emph{Gradient estimates for a
  nonlinear elliptic equation on complete {R}iemannian manifolds}, Proc. Amer.
  Math. Soc. \textbf{146} (2018), no.~11, 4993--5002. \MR{3856164}

\bibitem{MJ}
Jochen Merker, \emph{Regularity of solutions to doubly nonlinear diffusion
  equations}, Proceedings of the {S}eventh {M}ississippi {S}tate--{UAB}
  {C}onference on {D}ifferential {E}quations and {C}omputational {S}imulations,
  Electron. J. Differ. Equ. Conf., vol.~17, Southwest Texas State Univ., San
  Marcos, TX, 2009, pp.~185--195. \MR{2605594}

\bibitem{MR1}
Roger Moser, \emph{The inverse mean curvature flow and {$p$}-harmonic
  functions}, J. Eur. Math. Soc. (JEMS) \textbf{9} (2007), no.~1, 77--83.
  \MR{2283104}

\bibitem{SL1}
Laurent Saloff-Coste, \emph{Uniformly elliptic operators on {R}iemannian
  manifolds}, J. Differential Geom. \textbf{36} (1992), no.~2, 417--450.
  \MR{1180389}

\bibitem{SZ}
James Serrin and Henghui Zou, \emph{Cauchy-{L}iouville and universal
  boundedness theorems for quasilinear elliptic equations and inequalities},
  Acta Math. \textbf{189} (2002), no.~1, 79--142. \MR{1946918}

\bibitem{SZ1}
Philippe Souplet and Qi~S. Zhang, \emph{Sharp gradient estimate and {Y}au's
  {L}iouville theorem for the heat equation on noncompact manifolds}, Bull.
  London Math. Soc. \textbf{38} (2006), no.~6, 1045--1053. \MR{2285258}

\bibitem{SW}
Chiung-Jue~Anna Sung and Jiaping Wang, \emph{Sharp gradient estimate and
  spectral rigidity for {$p$}-{L}aplacian}, Math. Res. Lett. \textbf{21}
  (2014), no.~4, 885--904. \MR{3275651}

\bibitem{VJ}
Juan~Luis V\'azquez, \emph{Smoothing and {D}ecay {E}stimates for {N}onlinear
  {D}iffusion {E}quations}, Oxford Lecture Series in Mathematics and its
  Applications, vol.~33, Oxford University Press, Oxford, 2006. \MR{2282669}

\bibitem{WZ2}
Bing Wang and Hui-Chun Zhang, \emph{Liouville theorems for semilinear
  differential inequalities on sub-{R}iemannian manifolds}, J. Funct. Anal.
  \textbf{285} (2023), no.~5, Paper No. 110007, 30. \MR{4591328}

\bibitem{WZ}
Xiaodong Wang and Lei Zhang, \emph{Local gradient estimate for {$p$}-harmonic
  functions on {R}iemannian manifolds}, Comm. Anal. Geom. \textbf{19} (2011),
  no.~4, 759--771. \MR{2880214}

\bibitem{WW2}
Youde Wang and Guodong Wei, \emph{On the nonexistence of positive solution to
  {$\Delta u + au^{p+1} = 0$} on {R}iemannian manifolds}, J. Differential
  Equations \textbf{362} (2023), 74--87. \MR{4559367}

\bibitem{WC}
Yuzhao Wang and Wenyi Chen, \emph{Gradient estimates and entropy monotonicity
  formula for doubly nonlinear diffusion equations on {R}iemannian manifolds},
  Math. Methods Appl. Sci. \textbf{37} (2014), no.~17, 2772--2781. \MR{3271122}

\bibitem{YW}
Shan Yan and Lin~Feng Wang, \emph{Elliptic gradient estimates for the doubly
  nonlinear diffusion equation}, Nonlinear Anal. \textbf{176} (2018), 20--35.
  \MR{3856715}

\bibitem{ZZ}
Hui-Chun Zhang and Xi-Ping Zhu, \emph{Yau's gradient estimates on {A}lexandrov
  spaces}, J. Differential Geom. \textbf{91} (2012), no.~3, 445--522.
  \MR{2981845}

\bibitem{ZY1}
Liang Zhao and Dengyun Yang, \emph{Gradient estimates for the {$p$}-{L}aplacian
  {L}ichnerowicz equation on smooth metric measure spaces}, Proc. Amer. Math.
  Soc. \textbf{146} (2018), no.~12, 5451--5461. \MR{3866881}

\bibitem{Zhu}
Xiaobao Zhu, \emph{Hamilton's gradient estimates and {L}iouville theorems for
  fast diffusion equations on noncompact {R}iemannian manifolds}, Proc. Amer.
  Math. Soc. \textbf{139} (2011), no.~5, 1637--1644. \MR{2763753}

\end{thebibliography}
\end{CJK}
\end{document}